\definecolor{wine-stain}{rgb}{0.5,0,0}
\definecolor{pistachio}{rgb}{0.58, 0.77, 0.45}
\definecolor{eggshell}{rgb}{0.94, 0.92, 0.84}
\newtheorem{theorem}{Theorem~}[section]
\newtheorem{lemma}[theorem]{Lemma~}
\newtheorem{proposition}[theorem]{Proposition~}
\newtheorem{corollary}[theorem]{Corollary~}
\newtheorem{proposition-definition}[theorem]{Proposition-Definition~}
\theoremstyle{remark}
\newtheorem{remark}[theorem]{Remark~}
\newtheorem{example}[theorem]{Example~}
\theoremstyle{definition}
\newtheorem{definition}[theorem]{Definition~}
\newcommand{\abs}[1]{\left\vert#1\right\vert}
\newcommand{\sV}{{\mathcal V}}
\newcommand{\BC}{\mathbb C}
\newcommand{\BD}{\mathbb D}
\newcommand{\BF}{\mathbb F}
\newcommand{\BL}{\mathbb L}
\newcommand{\BQ}{\mathbb Q}
\newcommand{\BR}{\mathbb R}
\newcommand{\BZ}{\mathbb Z}
\newcommand{\md}{\mathrm{mod}}
\newcommand{\Co}{\operatorname{Co}}
\newcommand{\NS}{\operatorname{NS}}
\newcommand{\Pic}{\operatorname{Pic}}
\newcommand{\Fr}{\operatorname{Fr}}
\newcommand{\II}{\operatorname{II}}
\newcommand{\GL}{\operatorname{GL}}
\newcommand{\Amp}{\operatorname{Amp}}
\newcommand{\Hom}{\operatorname{Hom}}
\newcommand{\Span}{\mathrm{Span}}
\newcommand{\sgn}{\operatorname{sgn}}
\newcommand{\diag}{\operatorname{diag}}
\newcommand{\Aut}{\operatorname{Aut}}
\newcommand{\rk}{\operatorname{rank}}
\newcommand{\chr}{\operatorname{char}}
\newcommand{\rank}{\operatorname{rank}}
\begin{document}

\title[]{Finite Groups of Symplectic Automorphisms of Supersingular K3 surfaces in Odd Characteristics}%
\author{Bin Wang, Zhiwei Zheng}%
\address{}%
\email{}%

\newcommand{\Addresses}{{
		\bigskip
		\footnotesize
		
		B.~Wang, \textsc{The Chinese University of Hong Kong, Hong Kong, China}\par\nopagebreak
		\textit{Email address}: \texttt{binwang001@cuhk.edu.hk}
		
		\medskip
		
		Z.~Zheng, \textsc{Tsinghua University, Beijing, China}\par\nopagebreak
		\textit{Email address}: \texttt{zhengzhiwei@mail.tsinghua.edu.cn}
}}

\thanks{}%
\subjclass{}%
\keywords{}%

\begin{abstract}
In 2009, Dolgachev--Keum showed that finite groups of tame symplectic automorphisms of K3 surfaces in positive characteristics are subgroups of the Mathieu group of degree $23$. In this paper, we utilize lattice-theoretic methods to investigate symplectic actions of finite groups $G$ on K3 surfaces in odd characteristics. For supersingular K3 surfaces with Artin invariants at least two, we develop a new machinery called $p$-root pairs to constrain possible symplectic finite group actions (without the assumption of tameness). The concept of $p$-root pair is closely related to root systems and Weyl groups. In particular, we provide alternative proof for many results by Dolgachev--Keum and give an upper bound for the exponent of $p$ in $|G|$.  
\end{abstract}
\maketitle
\setcounter{tocdepth}{1}
\tableofcontents

\section{Introduction}
K3 surfaces play an important role in many areas of mathematics and the study of their automorphism groups has a long and rich history. Nikulin \cite{nikulin1979finite} first systematically studied automorphisms of K3 surfaces via lattice theory. After that, in his pioneering work \cite{Muk88}, Mukai proved (using the so-called Mathieu representations) that a finite group acts faithfully and symplectically on a complex K3 surface if and only if it is a subgroup of the Mathieu group $M_{23}$ with the induced action on the set of $24$ points having at least $5$ orbits. The problem of classifying (symplectic) automorphism groups of K3 surfaces over fields of positive characteristics turns out to be more complicated. In \cite{DK09}, Dolgachev and Keum extended Mukai's methods to prove that if a finite group $G$ can act symplectically and faithfully on a K3 surface over an algebraically closed field of characteristic $p>0$ coprime to $|G|$, then $G$ is isomorphic to a subgroup of $M_{23}$. They also showed that if $p\ge 13$, then any symplectic and faithful action of a finite group $G$ on a K3 surface in characteristic $p$ must be tame, namely, $\gcd(|G|, p)=1$. Using the induced action on N\'eron--Severi lattice, Ohashi--Sch\"utt \cite{OS24} recently provided a complete classification of finite groups that can act on supersingular K3 surfaces with Artin invariant $1$. Combining with Dogalchev--Keum \cite{DK09}, this provides, for any prime $p$, a classification of finite groups which admit a tame symplectic action on a K3 surface in characteristic $p$.

In this paper, we investigate finite groups acting symplectically on K3 surfaces in odd characteristic. Our approach is also based on the induced action on N\'eron--Severi lattice. A novelty of this work is that we uniformly deal with all K3 surfaces in odd characteristics using Leech pairs (see Definition \ref{definition: leech pair}). In particular, we develop a new machinery called $p$-root pairs to link the wild symplectic actions on supersingular K3 surfaces of arbitrary Artin invariants with Weyl group actions on root lattices. By leveraging this relation, we obtain constraints of $p$-parts of a finite group acting symplectically and faithfully on a supersingular K3 surface in odd characteristic $p$.

Before stating our main results, we fix some terminologies. Throughout this paper, let $p$ be an odd prime, and $k$ be any algebraically closed field with $\chr(k)=p$. Let $X$ be a K3 surface in characteristic $p$ (which means that it is a K3 surface over $k$ in this paper). We denote the N\'eron--Severi lattice of $X$ by $\NS(X)$ with rank $\rho(X)$ (which is the Picard number of $X$). It is known that $\NS(X)$ is an even lattice with signature $(1,\rho(X)-1)$. The discriminant group of the lattice $\NS(X)$ is denoted by $A_{\NS(X)}$. We denote the automorphism group of $X$ by $\Aut(X)$. An automorphism $g$ of $X$ is symplectic if and only if $g$ acts identically on $H^{0}(X,\omega_{X})$. Here $\omega_X$ is the canonical line bundle on $X$. We denote by $\Aut^{s}(X)$ the group of symplectic automorphisms of $X$. Given a finite subgroup $G\le \Aut^{s}(X)$, there is an induced action of $G$ on the N\'eron--Severi lattice $\NS(X)$. 

In general, if we have an action of a finite group $G$ on a lattice $L$, we denote by $L^G$ the invariant sublattice, and by $S_G(L)$ the orthogonal complement of $L^G$ in $L$.  We call the lattice $S_G(L)$ the covariant lattice associated with the group action.   We put $S_{G}(X)=S_G(\NS(X))$ for short. This lattice is always rootless because any finite group action on $X$ fixes certain ample class (see discussion in \S\ref{section: supersingular K3}). The pair $(G, S_G(X))$ is a so-called Leech pair, see Definition \ref{definition: leech pair}. We denote by $\BL$ the Leech lattice, and by $\Co_0$ the Conway group (see \S\ref{subsection: leech} for more details.). One key point of our approach is that a Leech pair $(G,S)$ is a subpair of $(\Co_0, \BL)$ if $\rank(S)+\ell(A_S)\le 24$, see Proposition \ref{prop:emb leech}. Here $A_S$ represents the discriminant group of $S$, and for a finitely generated abelian group $A$, $\ell(A)$ is the minimal number of generators.  H\"ohn--Mason \cite{HM16} classified all possible sublattices of $\BL$ which are covariant lattices for certain finite group actions on $\BL$ (there are $290$ cases). This is the main lattice-theoretic input for our approach.

Suppose a group $G$ acts faithfully on a lattice $L$ such that $L^G=\{0\}$ and $G$ acts trivially on $A_L$. Then we denote by $\widetilde{G}$ the group of all automorphisms of $L$ which act trivially on $A_L$. We call $\widetilde{G}$ and $(\widetilde{G}, L)$ the saturation of $G$ and $(G,L)$ respectively. We call $(G,L)$ saturated if $G=\widetilde{G}$.

The K3 surface $X/k$ is said to be (Shioda-)supersingular if $\rho(X)$ reaches the maximal possibility $22$. When $X$ is supersingular, $$A_{\NS(X)}\cong (\BZ/p\BZ)^{2\sigma}, 1\le\sigma\le 10.$$ The number $\sigma$ is called the Artin invariant of $X$. For each $1\le\sigma\le 10$, the N\'eron--Severi lattices of all supersingular K3 surfaces with Artin invariant $\sigma$ are isomorphic, and we denote it by $N_{p,\sigma}$, see Proposition-Definition \ref{prop:R-S lattice}. A supersingular K3 surface is called superspecial if its Artin invariant is 1. It is known that there exists a unique superspecial K3 surface over $k$ up to isomorphism. If a K3 surface in characteristic $p$ is not supersingular, then it is of finite height with height $1\le h\le 10$, see discussions in \S\ref{section: tame}.

Employing the lattice-theoretic method, we prove the following proposition. 
\begin{proposition}
\label{proposition: criterion}
Fix any odd prime $p$. Let $(G,S)$ be a Leech pair with $\rk(S)\leq 21$. The following are equivalent:
\begin{enumerate}
\item for each supersingular K3 surface $X/k$ in characteristic $p$ with Artin invariant $\sigma$, there is an embedding $G\hookrightarrow \Aut^{s}(X)$, with $(G,S_{G}(X))\cong (G,S)$;
\item there exists a supersingular K3 surface $X/k$ in characteristic $p$ with Artin invariant $\sigma$, such that there is an embedding $G\hookrightarrow \Aut^{s}(X)$, with $(G,S_{G}(X))\cong (G,S)$;
\item there exists a primitive embedding $S \hookrightarrow N_{p,\sigma}$.
	\end{enumerate}
\end{proposition}

The superspecial case of Proposition \ref{proposition: criterion} was obtained by Ohashi--Sch\"utt \cite{OS24}. We collect all saturated Leech pairs $(G,S)$ that admit a primitive embedding into $N_{p,1}$ (for all odd prime $p$) in Table \ref{table: sublist of HM}. This table was essentially obtained by Ohashi--Sch\"utt \cite{OS24}. We include it in Appendix \ref{appendix} for the reader's convenience. The calculation is explained in the proof of Proposition \ref{proposition: superspecial list}. Table \ref{table: sublist of HM} will be used in understanding the size of finite groups of symplectic automorphisms of supersingular K3 surfaces with arbitrary Artin invariants, see Proposition \ref{proposition: G_L} and \S\ref{section: application of p-root pair}. 

An action of a finite group $G$ on a K3 surface in characteristic $p$ is called tame if $\gcd(|G|, p)=1$. We prove the following two results about tame symplectic actions.
\begin{proposition}
\label{proposition: tame supersingular}
For a tame symplectic faithful action of a finite group $G$ on a supersingular K3 surface $X$ in characteristic $p\ge 3$ with Artin invariant $\sigma$, we have $\rank(S_G(X))\le 22-2\sigma$. Moreover, the saturated pair $(\widetilde{G},S_G(X))$ is isomorphic to a pair in Table \ref{table: sublist of HM}. Conversely, each pair in Table \ref{table: sublist of HM} (with corresponding prime $p$) is isomorphic to $(G, S_G(X))$ for a tame action of a finite group $G$ on a superspecial K3 surface $X$ in characteristic $p$.
\end{proposition}

\begin{proposition}
\label{proposition: finite height and tame}
For a tame symplectic faithful action of a finite group $G$ on a K3 surface $X$ of finite height $h$ in characteristic $p\ge 3$, we have $\rank(S_G(X))\le 21-2h$, and there exists a primitive embedding of $S_G(X)$ into the K3 lattice.
\end{proposition}

In both situation of Proposition \ref{proposition: tame supersingular} and Proposition \ref{proposition: finite height and tame}, the pair $(G, S_G(X))$ is always a subpair of $(\Co_0, \BL)$ and $\rank(S_G(X))+\ell(A_{S_G(X)})\le 22$. By H\"ohn--Mason's table in \cite{HM16}, a Leech pair $(G,S)$ such that $\rank(S)+\ell(A_S)\le 22$ implies that $G$ is a subgroup of $M_{23}\le \Co_0$. Therefore, Proposition \ref{proposition: tame supersingular} and Proposition \ref{proposition: finite height and tame} provide an alternative proof (in odd characteristic cases) for the significant result by Dolgachev--Keum \cite[Theorem 4.7]{DK09} that any finite group of tame symplectic automorphisms of a K3 surface is a subgroup of $M_{23}$.
See also Ohashi--Sch\"utt \cite[Theorem 1.1]{OS24}.


Let $X$ be supersingular with Artin invariant $\sigma$. We can extend the action of $G$ on $N_{p,\sigma}$ to $N_{p,1}$. The lattice $S_G(N_{p,\sigma})$ is rootless, while $S_{G}(N_{p,1})$ might contain roots. We denote by $R$ the root lattice of $S_{G}(N_{p,1})$. Apparently, $G$ acts on $R$. Let $G_R$ be the image of $G$ in $\Aut(R)$, and $G_L=\mathrm{Ker}(G\to \Aut(R))$. Then we have an exact sequence:
\[
1\rightarrow G_L\rightarrow G\rightarrow G_R\rightarrow 1.
\] 
Further exploring the action of $G_R$ on $R$, we introduce the concept of (pseudo) p-root pairs (see \S\ref{section: p-root pair}). 
\begin{definition}
Let $H$ be a finite group acting faithfully on a root lattice $R$. We call $(H,R)$ a \emph{pseudo $p$-root pair} if the lattice $R_H^\#$ generated by $pR$ and  $\{g\alpha-\alpha\big{|} g\in H,\alpha\in R\}$ contains no roots. We call $(H,R)$ a \emph{$p$-root pair} if it is a pseudo $p$-root pair, and in addition $R^H=\{0\}$.
\end{definition}
We summarize most of our results on $p$-root pairs and their applications to supersingular K3 surfaces in \S\ref{section: p-root pair} and \S\ref{section: application of p-root pair} as follows:
\begin{theorem}
\label{theorem: main3}
    Let $(H,R)$ be a $p$-root pair, then:
      \begin{enumerate}
        \item If $R=A_m$, then $m+1$ is a power of $p$ (Proposition \ref{prop: type A}).
        \item $R$ cannot be of type $D_m, m\ge 5$ (Proposition \ref{proposition: no p-root pair of type D}). If $R=D_4$, $p=3$ (Subsection \ref{sec:D4}).
        \item If $p\ge 7$, $R$ cannot be of type $E$ (Proposition \ref{proposition: p>=7 no type E}).
        \item $H\cap W(R)$ is always a $p$-group, here $W(R)$ denotes the Weyl group of $R$ (Proposition \ref{prop: intersection p group}).
    \end{enumerate}
\end{theorem}

\begin{proposition}
\label{proposition: geometry gives p-root pair}
    Let $X$ be a supersingular K3 surface in characteristic $p$ with Artin invariant $\sigma$. Let $G$ be a finite group acting faithfully and symplectically on $X$. 
    \begin{enumerate}
   \item If $G$ is tame, then $G_R$ is trivial and $G=G_L$. 
   \item $(G_R, R)$ is a $p$-root pair.  
    \end{enumerate}
\end{proposition}

We here point out several interesting corollaries. For $p\ge 13$, we deduce that a finite group acting faithfully and symplectically on a supersingular K3 surface is always tame, see Theorem \ref{thm:new DK}.  This provides an alternative proof of \cite[Theorem 2.1]{DK09} for supersingular K3 surfaces. 

We also deduce that if $p=11$, then the exponent of $p$ in the order of $G$ is at most $1$. This corollary also follows from  Dolgachev--Keum \cite[Theorem 1.1]{DK09J}. See Proposition \ref{proposition: precise estimate of 11-order}. 

Moreover, combining both analysis on $G_L$ and $G_R$, we can also obtain an upper bound  on the exponent of $p$ in $|G|$ for a finite group $G$ of symplectic automorphisms of a K3 surface in characteristic $p=3,5,7$. See Proposition \ref{proposition: estimate of p-order}.

Now let us recall how our approach is related with earlier works. Kond\=o \cite{Kon98} gave another proof of Mukai's results which is lattice-theoretic and motivates our proof here. Ogus \cite{Ogu79} (respectively Bragg--Lieblich's unpublished work \cite{BL18}) proved the crystalline Torelli theorem for supersingular K3 surfaces in char $p\geq 5$ (respectively $p\geq 3$), which makes the lattice theory a powerful tool in the study of $\Aut^{s}(X)$. Nygaard \cite{Nyg80} and Jang \cite{Jan16} studied the induced action of an automorphism on the Witt vector cohomology and the discriminant group of the N\'eron--Severi group in odd characteristics respectively, which enables us to obtain Leech pairs. The key input on lattice-theoretic side is from H\"ohn--Mason \cite{HM16}, Jang \cite{Jan16} and also inspired by methods developed in Laza--Zheng \cite{laza2022symplectic}.

\noindent\textbf{Structure of the paper}:
In \S\ref{sec:lat}, we collect lattice-theoretic results that will be used in our paper. A key result is Proposition \ref{prop:emb leech}. In \S\ref{section: supersingular K3} we introduce supersingular K3 surfaces and recall the crystalline Torelli theorem by Ogus, Bragg and Lieblich. In particular, we give the motivation for the concept of $p$-root pairs. In \S\ref{section: tame}, we prove Proposition \ref{proposition: tame supersingular}, \ref{proposition: finite height and tame} and discuss their relation with previous works. In \S\ref{section: p-root pair}, we developed the new machinery of (pseudo) $p$-root pairs, and in \S\ref{section: application of p-root pair} we apply the new machinery to reprove an important result by Dolgachev--Keum (Theorem \ref{thm:new DK}) in supersingular cases and give upper bounds on the wild degree of finite groups of symplectic automophisms of K3 surfaces in characteristics $p=3,5,7,11$. In Appendix A, we include the full list of H\"ohn--Mason pairs admitting primitive embeddings in $N_{p,1}$.

\noindent\textbf{Acknowledgement}: Bin Wang is supported by General Research Fund 14307022 and Early Career Scheme 24307121 of Michael McBreen. Zhiwei Zheng is partially supported by NSFC 12301058. We thank Prof. Looijenga for helpful discussions, especially about $p$-root pairs. We thank Matthias Sch\"utt for many helpful comments after this paper being posted on arXiv.

\section{Lattice-Theoretical Results}
\label{sec:lat}
In this section, we introduce some terminologies and results from lattice theory that will be used. We refer to \cite{Nik79}, \cite[Chapter 15]{conway1999spherepackings}, \cite[Appendix A]{laza2022symplectic} for more details.
\subsection{Integral Lattices and $p$-Adic Lattices} 
Let $R$ be a PID (principal integral domain) with fractional field $K$. An $R$-lattice is a free $R$-module $L$ of finite rank endowed with a non-degenerate symmetric bilinear form $\phi:L\times L\rightarrow R$, which can be extended to a non-degenerate symmetric $K$-valued bilinear form on $L_K\coloneqq L\otimes K$. Denote by $L^{\vee}\coloneqq \Hom_R(L,R)$ the dual of $L$. We have natural embeddings $L\hookrightarrow L^{\vee}\hookrightarrow L_K$ with the first map sending $x$ to $\phi(x, \cdot)$. The discriminant group of $L$ is by definition the quotient $A_{L}:=L^{\vee}/L$, which is a finite abelian group. 
 
\begin{definition}
Given two ($R$-)lattices $L_{1}$ and $L_{2}$, and an embedding $i:L_{1}\hookrightarrow L_{2}$
	\begin{itemize}
		\item[(1)] $i$ is said to be primitive, if $L_{2}/L_{1}$ is a torsion-free $R$-module,
		\item[(2)] $i$ is said to be a saturation of $L_1$, if $\rk L_{1}=\rk L_{2}$.
	\end{itemize}
\end{definition}

A lattice over $R=\BZ$ is called an integral lattice; for a prime $p$, a lattice over $R=\BZ_p$ is called a $p$-adic lattice.
An integral (or $2$-adic) lattice $L$ is said to be even if for any $v\in L$, its norm $\phi(v,v)$ is even. Throughout this paper, we only encounter even lattices while over $\BZ$ or $\BZ_2$. An integral lattice $L$ is said to be unimodular if $A_{L}=0$.

\subsection{Finite Quadratic Forms}
We give a brief summary of finite quadratic forms following Nikulin \cite[$\S$ 1]{Nik79}.
\begin{definition}
	\begin{itemize}
		\item[(I)] A finite symmetric bilinear form consists of a finite abelian group $A$ and a symmetric $\BZ$-bilinear map $b:A\times A\rightarrow \BQ/\BZ$.
		\item[(II)] A finite quadratic form consists of a finite abelian group $A$ and a function $q:A\rightarrow \BQ/2\BZ$ satisfying the following properties:
		\begin{enumerate}
			\item[(1)] $q(na)=n^{2}q(a)$, for $n\in \BZ, a\in A$; and
			\item[(2)] if $b\colon A\times A\to \BQ/\BZ$ is defined by $b(a,b):=\frac{1}{2}(q(a+b)-q(a)-q(b))\in \BQ/\BZ$, then $(A, b)$ is a finite symmetric bilinear form. 
		\end{enumerate}
	We call $b$ the bilinear form of $q$. We say $q$ is non-degenerate if $b$ is.
	\end{itemize}

\end{definition}

Let $(L,\phi)$ be an even integral lattice. We define $q_L$ to be a function on $A_L$ sending $[x]$ to $[\phi(x,x)]$ for $x\in L^{\vee}$. The pair $(A_L, q_L)$ is called the discriminant form associated with $L$. It is straightforward to check that $(A_{L},q_{L})$ is a non-degenerate finite quadratic form. We have the following decomposition:
\begin{equation}\label{eq:decomp}
\BQ/\BZ\cong \oplus_{p}\BQ_{p}/\BZ_{p}, \BQ/2\BZ\cong (\oplus_{p\neq 2}\BQ_{p}/\BZ_{p})\oplus \BQ_{2}/2\BZ_{2}
\end{equation}
\begin{proposition}\cite[\S 1, $2^{\circ}$]{Nik79}
	Let $(A,q)$ be a finite quadratic form. We decompose $A$ into a direct sum of its $p$-part $$A=\oplus_{p} A_{p}$$ 
	and $q_{p}=q|_{A_{p}}$. Then $(A,q)\cong \oplus_{p}(A_{p},q_{p})$. Via the decomposition \eqref{eq:decomp}, $q_{p}$ takes values in $\BQ_{p}/\BZ_{p}$ for $p\neq2 $, and $q_{2}$ takes values in $\BQ_{2}/2\BZ_{2}$. 
\end{proposition}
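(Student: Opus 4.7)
The plan is to reduce the statement to two elementary compatibility checks on the canonical primary decomposition $A = \oplus_p A_p$ of the underlying finite abelian group (where $A_p$ is the $p$-Sylow subgroup of elements annihilated by some power of $p$). Specifically, I want to verify that distinct primary summands are orthogonal under the bilinear form $b$ associated with $q$, and that the restriction $q_p$ naturally takes values in the stated primary summand of $\BQ/2\BZ$. Together these yield the orthogonal decomposition $(A, q) \cong \oplus_p (A_p, q_p)$ with the asserted coefficient groups.

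For orthogonality, I would fix primes $p \neq p'$ and elements $x \in A_p$, $y \in A_{p'}$ of orders $p^a$ and $p'^b$ respectively. Since $b$ is $\BZ$-bilinear and lands in $\BQ/\BZ$, we have $p^a\, b(x,y) = b(p^a x, y) = 0$ and likewise $p'^b\, b(x,y) = 0$. As $\gcd(p^a, p'^b) = 1$, B\'ezout's identity forces $b(x,y) = 0$, so $A_p \perp A_{p'}$. Applying the polarization formula $q(x+y) = q(x) + q(y) + 2 b(x,y)$ and iterating over all primes then gives $q\bigl(\sum_p a_p\bigr) = \sum_p q(a_p)$, i.e.\ the orthogonal decomposition of quadratic forms.

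For the value groups, take $x \in A_p$ of order $p^a$. Property (1) of a finite quadratic form gives $p^{2a}\, q(x) = q(p^a x) = 0$ in $\BQ/2\BZ$. If $p$ is odd, then $p^{2a}$ is odd, and the subgroup of $\BQ/2\BZ$ killed by an odd integer is precisely the summand $\oplus_{\ell \neq 2} \BQ_\ell/\BZ_\ell$ of \eqref{eq:decomp}; refining by the explicit power $p^{2a}$ confines $q(x)$ to the $\BQ_p/\BZ_p$ factor. When $p = 2$, $q(x)$ is killed by $2^{2a}$, hence lies in the $2$-power torsion of $\BQ/2\BZ$, which is exactly the summand $\BQ_2/2\BZ_2$.

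I do not expect any genuine obstacle; the argument is standard. The one place deserving care is the asymmetry between odd primes and $p = 2$, which comes from the coefficient module being $\BQ/2\BZ$ rather than $\BQ/\BZ$. Writing out \eqref{eq:decomp} explicitly before analyzing torsion makes the identification of the $p$-primary summands — and in particular the ``doubled'' module $\BQ_2/2\BZ_2$ at $p=2$ — entirely mechanical, so the whole proof reduces to two short computations and the canonical primary decomposition of $A$.
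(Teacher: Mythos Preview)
Your argument is correct and is the standard elementary proof: orthogonality of distinct primary components via B\'ezout, additivity of $q$ from the polarization identity, and identification of the value groups by reading off $p$-primary torsion in $\BQ/2\BZ$. The paper itself does not supply a proof of this proposition at all --- it simply cites \cite[\S 1, $2^{\circ}$]{Nik79} --- so there is nothing to compare against beyond noting that what you wrote is exactly the routine verification Nikulin records.
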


For any finite abelian group $A$, we denote by $\ell(A)$ the minimal number of generators of $A$, and denote $\ell_p(A)=\ell(A_p)$ for any prime $p$.

For the discriminant form $q_L$ of $L$, we define its signature $\tau(q_L)$ as the signature of $L$ modulo $8$. By \cite[Theorem 1.3.3]{Nik79}, $\tau(q_L)$ only depends on the finite quadratic form $q_L$.

\subsection{Conway--Sloane Symbol}
We take $R=\BZ_{p}$. 

Following Nikulin \cite[\S 1, $8^{\circ}$]{Nik79}, we denote the one-dimensional $\BZ_{p}$-lattice $(\theta p^{k})$ by $K^{(p)}_{\theta,k}$, where $\theta\in\BZ_{p}^{\times}$. And when $p=2$, we put:
\[
U^{(2)}_{k}=\begin{pmatrix}
0&2^{k}\\
2^{k}&0
\end{pmatrix},V^{(2)}_{k}=\begin{pmatrix}
2^{k+1}&2^{k}\\
2^{k}&2^{k+1}
\end{pmatrix}
\]
We denote the semigroup of isomorphism classes of p-adic lattices with respect to "$\oplus$" by $\mathrm{Qu}(\BZ_{p})$.
\begin{proposition}\cite[Proposition 1.8.3]{Nik79}\label{prop:decomp of p-adic lat}
	\begin{enumerate}
		\item if $p\neq 2$, $\mathrm{Qu}(\BZ_{p})$ is generated by $K^{(p)}_{\theta,k}$;
		\item if $p=2$, $\mathrm{Qu}(\BZ_{2})$ is generated by $K^{(2)}_{\theta,k}$, $U^{(2)}_{k}$ and $V^{(2)}_{k}$.
	\end{enumerate}
\end{proposition}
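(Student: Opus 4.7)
The plan is to argue by induction on the rank of $L\in\mathrm{Qu}(\BZ_p)$, peeling off either a rank-one summand of the form $K^{(p)}_{\theta,k}$ or, in the dyadic case, a rank-two indecomposable summand of the form $U^{(2)}_{k}$ or $V^{(2)}_{k}$. The base case (rank $0$ or $1$) is trivial. Throughout, denote the bilinear form by $\phi$ and for $x,y\in L$ write $v_p(\phi(x,y))$ for the $p$-adic valuation.

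\textbf{Case $p$ odd.} Let $k=\min\{v_p(\phi(x,y)):x,y\in L\}$. Because $2\in\BZ_p^{\times}$, the polarization identity $2\phi(x,y)=\phi(x+y,x+y)-\phi(x,x)-\phi(y,y)$ forces this minimum to be realized on the diagonal: there exists $v\in L$ with $v_p(\phi(v,v))=k$, say $\phi(v,v)=\theta p^k$ with $\theta\in\BZ_p^{\times}$. Such a $v$ is automatically primitive (otherwise $\phi(v,v)$ would lie in $p^{k+2}\BZ_p$). For any $x\in L$, minimality of $k$ gives $v_p(\phi(v,x))\ge k$, so $\phi(v,x)/\phi(v,v)\in\BZ_p$ and the Gram--Schmidt projection $x\mapsto x-\tfrac{\phi(v,x)}{\phi(v,v)}v$ lands in $v^{\perp}$. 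Hence $L=\BZ_p v\oplus v^{\perp}$, with the first summand isomorphic to $K^{(p)}_{\theta,k}$, and induction finishes the argument.

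\textbf{Case $p=2$.} Again set $k=\min\{v_2(\phi(x,y)):x,y\in L\}$. There are two subcases.

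If some diagonal entry realizes the minimum, pick $v\in L$ with $\phi(v,v)=\theta 2^k$, $\theta\in\BZ_2^{\times}$. Then $v$ is primitive, and for any $x$, $v_2(\phi(v,x))\ge k$ so $\phi(v,x)/\phi(v,v)\in\BZ_2$; exactly as above, $L=\BZ_2 v\oplus v^{\perp}\cong K^{(2)}_{\theta,k}\oplus v^{\perp}$ and we induct.

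Otherwise every diagonal entry has valuation $\ge k+1$, while some off-diagonal entry $\phi(v,w)=\eta 2^k$ with $\eta\in\BZ_2^{\times}$. Write $\phi(v,v)=2^{k+1}a$, $\phi(w,w)=2^{k+1}b$ with $a,b\in\BZ_2$. The Gram matrix of $M:=\BZ_2 v+\BZ_2 w$ in the basis $(v,w)$ has determinant $2^{2k+2}ab-2^{2k}\eta^2$, a unit times $2^{2k}$, so $M$ has rank $2$ and is non-degenerate. Primitivity of $\{v,w\}$ in $L$ follows from the same valuation argument as before, and the orthogonal projection $x\mapsto x-\alpha v-\beta w$ with $(\alpha,\beta)$ the unique solution in $\BZ_2^{2}$ of the $2\times 2$ system $\bigl(\phi(v,x),\phi(w,x)\bigr)=(\alpha,\beta)\cdot\mathrm{Gram}(v,w)$ (solvable in $\BZ_2$ because the right-hand sides have valuation $\ge k$ and the matrix has determinant of valuation $2k$) lands in $M^{\perp}$. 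Thus $L=M\oplus M^{\perp}$. It remains to identify $M$, up to isomorphism over $\BZ_2$, with either $U^{(2)}_{k}$ (when $a,b$ are both even, so we can change basis to kill the diagonal) or $V^{(2)}_{k}$ (when at least one of them is odd, so after a $\BZ_2$-linear change making both diagonal entries units of the same $2$-adic valuation $k+1$ we recognize the standard form). One then inducts on $M^{\perp}$.

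\textbf{Main obstacle.} The delicate point is the last paragraph: when no diagonal element attains the minimum valuation, one must show that the rank-two block $M$ is, over $\BZ_2$, isomorphic to precisely one of $U^{(2)}_{k}$ and $V^{(2)}_{k}$, and not some other rank-two even $2$-adic lattice. This requires an explicit change-of-basis argument, using $2$-adic Hensel-style reductions to normalize the diagonal entries (either to $0$ or to $2^{k+1}$), together with the fact that the unit $\eta$ can be absorbed into a change of basis because $\BZ_2^{\times}$ acts transitively on the relevant invariants of the Gram matrix modulo higher-order terms. Once that normal-form analysis is in hand, the induction closes cleanly.
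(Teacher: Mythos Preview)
The paper does not give its own proof of this proposition; it merely cites \cite[Proposition~1.8.3]{Nik79} as a known result. Your inductive splitting-off argument is precisely the standard proof (essentially Nikulin's), so there is nothing to compare against in the paper itself.

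That said, one point in your write-up is not quite right. In the dyadic rank-two case you have, after rescaling, a Gram matrix $\begin{pmatrix} 2a & 1 \\ 1 & 2b\end{pmatrix}$ with $a,b\in\BZ_2$, and you claim the dichotomy is ``$a,b$ both even $\Rightarrow U^{(2)}_k$'' versus ``at least one odd $\Rightarrow V^{(2)}_k$''. This is false: for instance $a=0$, $b=1$ gives $\begin{pmatrix}0&1\\1&2\end{pmatrix}$, which is isomorphic to $U^{(2)}_0$ via $v'=v$, $w'=w-v$, not to $V^{(2)}_0$. The correct invariant is the discriminant modulo $(\BZ_2^{\times})^2$: one computes $\det = 4ab-1$, which is $\equiv -1\pmod 8$ when $ab$ is even and $\equiv 3\pmod 8$ when $ab$ is odd. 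Hence the right split is ``at least one of $a,b$ even $\Rightarrow U^{(2)}_k$'' versus ``both $a,b$ odd $\Rightarrow V^{(2)}_k$''. Once you have the discriminant pinned down, an explicit Hensel-type reduction (iteratively clearing the diagonal in the first case, or normalizing both diagonal entries to exactly $2^{k+1}$ in the second) finishes the identification; you were right to flag this as the step requiring care, and it is worth writing out in full.
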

\begin{remark}
	Given a $p$-adic lattice, its expression as a direct sum of $K^{(p)}_{\theta,k}$ (and  $U^{(2)}_{k}$,  $V^{(2)}_{k}$ for $p=2$) may not be unique, see \cite[\S1, $8^{\circ}$]{Nik79}.
\end{remark}
When $p$ is odd, let $(L,\phi)$ be a p-adic lattice. Then by Proposition \ref{prop:decomp of p-adic lat}, there exists a $\BZ_{p}$-basis of $L$, such that with respect to this basis $\phi$ is a diagonal matrix of the following form:
\[
\diag(p^{s_{1}}M_{1},\cdots,p^{s_{t}}M_{t}).
\]
Here $0\le s_1<s_2<\cdots<s_t$, each $M_{i}$ is a diagonal matrix of rank $r_{i}$, with diagonal terms in $\BZ_p^\times$. 

We define the Conway--Sloane symbol of $L$ as $\prod\limits_{s_i>0}(p^{s_{i}})^{\epsilon_{i}r_{i}}$
where 
\begin{equation*}
\epsilon_{i}=
\begin{cases}
+, \text{if} \det(M_{i}) \text{ is a square in } \BZ_p, \\
-, \text{otherwise}.
\end{cases}
\end{equation*}

For $p=2$, let $(L,\phi)$ be a $2$-adic 
even lattice. The Conway--Sloane symbol  is more complicated. By Proposition \ref{prop:decomp of p-adic lat} and \cite[Proposition 1.8.2]{nikulin1979finite}, there exists a $\BZ_{2}$-basis of $L$, such that with respect to this basis $\phi$ is a block diagonal matrix of the following form:
\begin{equation*}
\diag(2^{s_{1}}M_{1},\cdots, 2^{s_{t}}M_{t}).
\end{equation*}
Here $M_i$ is either a diagonal matrix with diagonal terms in $\BZ_2^\times$, or a block diagonal matrix with diagonal terms $U_0^{(2)}$ and $V_0^{(2)}$. In the former case we call $M_i$ of odd type, and for the latter case we call $M_i$ of even type. Suppose $M_i$ has rank $r_i$, which does not depend on the choices of basis. The Conway--Sloane symbol is defined as $\prod\limits_{s_i>0} {(2^{s_i})^{\epsilon_i r_i}_{t_i}}$ where
\begin{equation*}
\epsilon_{i}=
\begin{cases}
+, \text{if} \det(M_{i})\equiv \pm 1\; (\md\; 8), \\
-, \text{otherwise},
\end{cases}
\end{equation*}
and
\begin{equation*}
t_i=
\begin{cases}
\mathrm{Tr}(M_i) \;(\md\; 8), \text{ if } M_i \text{ is of odd type}, \\
\II, \text{otherwise}.
\end{cases}
\end{equation*}

For an even integral lattice $L$, putting together the Conway--Sloane symbol of $L$ as a $p$-adic lattice for all possible $p$, we obtain the Conway--Sloane symbol of $L$.

If two even integral lattices $L$ and $L'$ have the same Conway--Sloane symbol, then the discriminant forms $(A_{L},q_{L})$ and $(A_{L'},q_{L'})$ are isomorphic. In other words, Conway--Sloane symbol determines the discriminant form up to isomorphism.

\subsection{Niemeier Lattices and Leech Pairs}
\label{subsection: leech}
From now on, lattices are always integral if not stated otherwise.
\begin{definition}
	A Niemeier lattice is an even unimodular  negative lattice with rank 24. There are 24 Niemeier lattices. Among the 24 Niemeier lattices, there is a unique one called the Leech lattice, denoted by $\BL$, which is rootless (namely, $\BL$ does not contain elements with norm -2).
\end{definition}

The Leech lattice will play an important role in our study of finite symplectic automorphism groups of supersingular K3 surfaces.
\begin{definition}
	We denote the automorphism group of the Leech lattice $\BL$ by $\Co_{0}$. And we put $\Co_{1}=\Co_{0}/\{\pm 1\}$. They are called Conway groups. The Conway group $\Co_{1}$ is a sporadic simple group.
\end{definition}

\begin{definition}
\label{definition: leech pair}\cite[Definition 3.3]{laza2022symplectic}
For a finite group $G$ acting faithfully on a negative definite even lattice $S$, the pair $(G, S)$ is called a Leech pair, if $S$ is rootless, the induced action of $G$ on $A_S$ is trivial, and the action has no non-zero fixed vectors. 
\end{definition}



The following criterion is crucial for us.
\begin{proposition}
\label{prop:emb leech}
	Given a Leech pair $(G,L)$ with $\rk(L)+\ell(A_{L})\leq 24$, then $(G,L)$ is a subpair of $(\Co_0, \BL)$. In particular, $G$ is a subgroup of $\Co_0$.
\end{proposition}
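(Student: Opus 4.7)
The plan is to proceed in three steps: primitively embed $L$ into the Leech lattice $\BL$, extend the $G$-action by Nikulin gluing, and conclude that $(G,L)$ sits inside $(\Co_0, \BL)$. For the embedding I would invoke (or reprove) the Conway--Sloane fact that a negative-definite rootless even lattice $L$ with $\rk(L) + \ell(A_L) \leq 24$ admits a primitive embedding into $\BL$; see \cite{conway1999spherepackings}. A proof from scratch proceeds in two moves: first apply Nikulin's existence theorem for primitive embeddings into even unimodular lattices of signature $(0,24)$ (the signature compatibility $\tau(q_L) \equiv -\rk(L) \pmod 8$ is automatic, since $L$ is negative definite and $\BL$ has signature $(0,24)$) to obtain an embedding into some Niemeier lattice $N$, then upgrade to $N \cong \BL$ using the rootlessness of $L$. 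Set $M := L^{\perp}$ in $\BL$; it has rank $24 - \rk(L)$ and discriminant form $(A_M, q_M) \cong (A_L, -q_L)$ via a Nikulin gluing isomorphism $\gamma : A_L \to A_M$.

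Next I would extend the $G$-action by letting $G$ act trivially on $M$. Since $G$ acts trivially on $A_L$ by the Leech-pair hypothesis and on $A_M$ by construction, $\gamma$ is automatically $G$-equivariant, so the combined $G$-action on $L \oplus M$ extends to its unique even unimodular overlattice $\BL$. The resulting faithful $G$-action on $\BL$ exhibits $(G,L)$ as a subpair of $(\Aut(\BL), \BL) = (\Co_0, \BL)$, with $L = S_G(\BL)$ and $M = \BL^{G}$; faithfulness on $\BL$ is inherited from faithfulness on $L$.

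The main obstacle lies in the upgrade step above, namely forcing the Niemeier lattice produced by Nikulin's existence theorem to be $\BL$. A naive projection of a root $r$ of $N$ onto $L_{\BQ}$ need not be integral, nor of norm $-2$ even when integral, so ruling out every non-Leech Niemeier requires either a case-by-case analysis of the $G$-action on each Niemeier root system $R_N$ (exploiting $W(R_N) \subset \Aut(N)$ and that $R_N$ has full rank $24$ for $N \neq \BL$), or a direct appeal to the Conway--Sloane classification of rootless primitive sublattices of Niemeier lattices that singles out $\BL$ as the one compatible with a Leech pair structure.
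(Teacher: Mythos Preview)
Your three-step outline is correct and matches the structure of the proofs in the cited references; note that the paper itself gives no argument for this proposition but simply defers to \cite{gaberdiel2012symmetries}, \cite{huybrechts2016derived}, and \cite{laza2022symplectic}. Once the primitive embedding $L\hookrightarrow\BL$ is in hand, your Nikulin-gluing extension of the $G$-action (trivial on $M=L^\perp$, automatically compatible with the glue map because $G$ is trivial on $A_L$) is exactly right and yields $S_G(\BL)=L$, $\BL^G=M$.

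You have also correctly located the only substantive content: upgrading from ``some Niemeier $N$'' to $N=\BL$. Two corrections on that step. First, the embedding statement you want (every rootless even negative-definite $L$ with $\rk L+\ell(A_L)\le 24$ embeds primitively into $\BL$) is not a result one finds stated in \cite{conway1999spherepackings}; it is precisely what is established in the appendix of \cite{gaberdiel2012symmetries}. Second, neither of your two proposed resolutions is what is actually done there. There is no off-the-shelf ``Conway--Sloane classification of rootless primitive sublattices of Niemeier lattices'' to invoke, and a Niemeier-by-Niemeier case check, while in principle possible, is not the route taken. The argument in \cite{gaberdiel2012symmetries} starts from a Nikulin embedding into some Niemeier $N$ and, whenever $N\neq\BL$, uses the rootlessness of $L$ together with the full-rank root system of $N$ to locate a suitable vector in $L^\perp$ along which one passes to another Niemeier lattice still containing $L$ primitively, eventually descending to $\BL$. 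If you do not wish to reproduce that neighbour-type argument, the cleanest fix is to isolate the embedding as a lemma and cite the appendix of \cite{gaberdiel2012symmetries} or \cite[Proposition~3.4]{laza2022symplectic} for it, exactly as the present paper does.
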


This proposition is originally from \cite[appendix]{gaberdiel2012symmetries}, see also \cite[Corollary 4.19]{marquand2025Ogrady10} and \cite{zheng2025leechlemma}.

\subsection{Some Lemmas on Lattice Embedding}
We here collect some lemmas which will be frequently used in our later discussion.
\begin{lemma}
\label{lemma: sum}
Let $M\hookrightarrow N$ be a primitive embedding of two even lattices $N, M$, then $\rank(M)+\ell_q(A_M)\le \rank(N)+\ell_q(A_N)$ for any prime number $q$. Moreover, $\rank(M)+\ell(A_M)\le \rank(N)+\ell(A_N)$.
\end{lemma}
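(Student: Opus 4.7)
Let $K = M^{\perp}$ be the orthogonal complement of $M$ in $N$, which is itself primitive with $\rank(K) = s := \rank(N) - \rank(M)$. Since $A_K = K^{\vee}/K$ is a quotient of $K^{\vee}\cong \BZ^{s}$, we have $\ell_q(A_K)\leq s$ for every prime $q$. The plan is to bound $\ell_q(A_M)$ in terms of $\ell_q(A_N)$ and $s$ by exploiting Nikulin's description of overlattices.

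Recall that the primitive overlattice $M\oplus K \subset N$ corresponds to an isotropic subgroup $H := N/(M\oplus K)\subset A_M\oplus A_K$. Primitivity of $M$ and of $K$ translates into the injectivity of both projections $p_M\colon H\to A_M$ and $p_K\colon H\to A_K$, so $H$ is the graph of an anti-isometry $\gamma\colon \Gamma_M \iso \Gamma_K$ between the subgroups $\Gamma_M := p_M(H)\subset A_M$ and $\Gamma_K := p_K(H)\subset A_K$. Nikulin's theory further gives $A_N \cong H^{\perp}/H$, where $H^{\perp}\subset A_M\oplus A_K$ is taken with respect to the discriminant bilinear form of $M\oplus K$.

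The next step is to establish the short exact sequence
\begin{equation*}
0 \longrightarrow \Gamma_K^{\perp} \longrightarrow A_N \longrightarrow A_M/\Gamma_M \longrightarrow 0,
\end{equation*}
where $\Gamma_K^{\perp}\subset A_K$. Because $A_K$ is self-dual under its bilinear form, a straightforward computation shows $p_M(H^{\perp}) = A_M$, with kernel $\{0\}\oplus \Gamma_K^{\perp}$; the displayed sequence follows by noting that $H\cap(\{0\}\oplus A_K) = 0$ and passing to the quotient by $H$. This is the main technical step, and the core obstacle I anticipate is verifying the surjectivity $p_M(H^{\perp})=A_M$ cleanly using non-degeneracy of the discriminant form.

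From the sequence above, $A_M/\Gamma_M$ is a quotient of $A_N$, so $\ell_q(A_M/\Gamma_M)\leq \ell_q(A_N)$. Combined with the inclusion $\Gamma_M \cong \Gamma_K \subset A_K$ giving $\ell_q(\Gamma_M)\leq \ell_q(A_K)\leq s$, and the elementary inequality $\ell_q(A_M)\leq \ell_q(\Gamma_M) + \ell_q(A_M/\Gamma_M)$ coming from the short exact sequence $0\to \Gamma_M\to A_M\to A_M/\Gamma_M\to 0$, we conclude
\begin{equation*}
\rank(M) + \ell_q(A_M) \leq \rank(M) + s + \ell_q(A_N) = \rank(N) + \ell_q(A_N).
\end{equation*}
Finally, for a finite abelian group $A$ the structure theorem gives $\ell(A) = \max_q \ell_q(A)$, so choosing $q$ to realize the maximum for $A_M$ yields $\rank(M)+\ell(A_M)\leq \rank(N)+\ell(A_N)$, completing the proof.
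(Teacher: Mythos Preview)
Your argument is correct, but it follows a different route from the paper's. The paper does not work with $K=M^{\perp}_N$ directly; instead it first embeds $N$ primitively into an even unimodular lattice $L$, takes orthogonal complements $N_L^{\perp}\hookrightarrow M_L^{\perp}$ there, and lets $K$ be the complement of $N_L^{\perp}$ inside $M_L^{\perp}$. Then $N_L^{\perp}\oplus K\hookrightarrow M_L^{\perp}$ is a finite-index saturation, so $A_M\cong A_{M_L^{\perp}}$ is a subquotient of $A_{N_L^{\perp}}\oplus A_K\cong A_N\oplus A_K$, giving $\ell_q(A_M)\le \ell_q(A_N)+\ell_q(A_K)\le \ell_q(A_N)+\rank(K)$ in one line. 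Your approach stays intrinsic to $N$ and instead unpacks Nikulin's identification $A_N\cong H^{\perp}/H$ to produce the exact sequence $0\to\Gamma_K^{\perp}\to A_N\to A_M/\Gamma_M\to 0$, from which the same inequality follows. What your approach buys is that it avoids invoking the existence of a unimodular envelope; what the paper's approach buys is brevity, since the subquotient observation replaces your explicit computation of $p_M(H^{\perp})$ and the resulting exact sequence. Two small points worth making explicit in a final write-up: the surjectivity $p_M(H^{\perp})=A_M$ uses that $\BQ/\BZ$ is injective (so any character on $\Gamma_K$ extends to $A_K$), and the step $\ell_q(\Gamma_K)\le \ell_q(A_K)$ for a subgroup is justified by comparing $q$-torsion subgroups.
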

\begin{proof}
Take a primitive embedding of $N$ into an even unimodular lattice $L$. Denote by $N_L^{\perp}, M_L^{\perp}$ the orthogonal complements of $N,M$ in $L$ respectively. Then we have a primitive embedding $N^{\perp}_L\hookrightarrow M^{\perp}_L$. Let $K$ be the orthogonal complement of $N^{\perp}_L$ in $M^{\perp}_L$. Then we have a saturation $N^{\perp}_L\oplus K \hookrightarrow M^{\perp}_L$. This implies that:
$$\ell_q(A_{M^{\perp}_L}) \le \ell_q(A_{N^{\perp}_L}\oplus A_K).$$
Combining with $A_{M^{\perp}_L}\cong A_M$, $A_{N^{\perp}_L}\cong A_N$ and $\ell(A_K)\le \rank(K)=\rank(N)-\rank(M)$ we conclude $\rank(M)+\ell_q(A_M)\le \rank(N)+\ell_q(A_N)$. Replace $l_q$ by $l$, the above argument still holds and we conclude $\rank(M)+\ell(A_M)\le \rank(N)+\ell(A_N)$.
\end{proof}

Let $G$ be a group acting on a lattice $L$. Recall that $L^{G}$ is the $G$-invariant sublattice of $L$ and $S_{G}(L)$ is the orthogonal complement of $L^G$ in $L$, which is called the covariant lattice of the action.

\begin{lemma}
\label{lemma:trivial action}
Suppose a group $G$ acts on an even lattice $L$, such that the induced action of $G$ on $A_{L}$ is trivial. Then $G$ acts trivially on $A_{S_{G}(L)}$.
\end{lemma}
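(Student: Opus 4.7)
The plan is to use the standard Nikulin gluing description for the finite-index inclusion $L^G \oplus S_G(L) \subseteq L$. Write $H := L/(L^G \oplus S_G(L))$, a finite abelian group. By standard lattice theory, $H$ embeds as a $G$-invariant isotropic subgroup of the discriminant form $A_{L^G} \oplus A_{S_G(L)}$ with $H \subseteq H^\perp$, and there is a $G$-equivariant isomorphism $A_L \cong H^\perp/H$. Since $G$ fixes $L^G$ pointwise, the induced action on the first factor $A_{L^G}$ is trivial; the action on $A_{S_G(L)}$ is precisely the one whose triviality we wish to establish.

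First I would check that both $L^G$ and $S_G(L)$ are primitive sublattices of $L$. For $L^G$: if $nw \in L^G$ for some $w \in L$ and $n \ge 1$, then $n(gw - w) = 0$ in the torsion-free lattice $L$ forces $gw = w$, so $w \in L^G$. For $S_G(L) = L \cap (L^G)^\perp_{L_\BQ}$ this is automatic from the definition as an orthogonal complement. Consequently, under the embedding $H \hookrightarrow A_{L^G} \oplus A_{S_G(L)}$, we have $H \cap A_{L^G} = 0$ and $H \cap A_{S_G(L)} = 0$, so $H$ is the graph of an isomorphism $H_1 \iso H_2$ between subgroups $H_i$ of the two discriminant groups.

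The key step is to show that the second projection $\pi_2\colon H^\perp \to A_{S_G(L)}$ is surjective. Given $y \in A_{S_G(L)}$, producing $a \in A_{L^G}$ with $(a,y) \in H^\perp$ amounts to realizing a prescribed $\BQ/\BZ$-valued functional on $H_1$ (namely $x' \mapsto -b_{A_{S_G(L)}}(y, \phi(x'))$) by pairing with $a$. This is possible because the non-degeneracy of the form on $A_{L^G}$ makes the map $A_{L^G} \to \Hom(H_1, \BQ/\BZ)$ surjective (its target is $\BQ/\BZ$-valued and $\BQ/\BZ$ is an injective abelian group). Granting this surjectivity, for any $y \in A_{S_G(L)}$ and $g \in G$, I would lift $y$ to some $(a,y) \in H^\perp$, observe that $g\cdot(a,y) - (a,y) = (0, gy - y)$ by triviality of $G$ on $A_{L^G}$, and then invoke the hypothesis of triviality on $H^\perp/H \cong A_L$ to conclude $(0, gy - y) \in H \cap A_{S_G(L)} = 0$, whence $gy = y$.

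The main technical point will be the surjectivity of $\pi_2|_{H^\perp}$. Without the primitivity of $S_G(L)$ in $L$ (which forces $H \cap A_{S_G(L)} = 0$), the same argument would yield only triviality on a proper subquotient of $A_{S_G(L)}$, so the two observations — primitivity of the summands and non-degeneracy of the discriminant forms — are both essential and must be combined.
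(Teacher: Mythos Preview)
Your argument via the Nikulin gluing isomorphism $A_L \cong H^\perp/H$ is correct. The paper, however, takes a much shorter and more direct route: since $S_G(L) \hookrightarrow L$ is primitive, restriction gives a surjection $L^\vee \twoheadrightarrow S_G(L)^\vee$, so any class in $A_{S_G(L)}$ lifts to some $x \in L^\vee$; the hypothesis then gives $gx - x \in L$, and the one-line computation $\phi(gx - x, y) = \phi(x, g^{-1}y) - \phi(x,y) = 0$ for $y \in L^G$ shows $gx - x \in S_G(L)$, hence $g[x] = [x]$ in $A_{S_G(L)}$. Your approach and the paper's are really two sides of the same coin --- your surjectivity of $\pi_2|_{H^\perp}$ is the discriminant-group shadow of the paper's surjection $L^\vee \to S_G(L)^\vee$ --- but the paper avoids the full Nikulin machinery, the graph description of $H$, and the appeal to injectivity of $\BQ/\BZ$ entirely. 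What your framing buys is that it makes the symmetric roles of the two primitive sublattices and the structure of $H$ more transparent; what it costs is a fair amount of overhead for a statement that yields to a three-line argument at the level of dual lattices.
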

\begin{proof}
	Since $S_{G}(L)\hookrightarrow L$ is a primitive embedding, we have a surjection $L^{\vee}\twoheadrightarrow (S_{G}(L))^{\vee}$. In fact, we have:
	\[
	S_{G}(L)\hookrightarrow L\hookrightarrow L^{\vee}\twoheadrightarrow (S_{G}(L))^{\vee}
	\] 
	Given an element in $A_{S_{G}(L)}$, we can choose a representative $x\in L^{\vee}$. Since $G$ acts trivially on $A_{L}$, we have $gx-x\in L$ for any $g\in G$. Let $\phi$ be the bilinear form on $L$. For any $y\in L^{G}$, we have $\phi(gx-x,y)=0$, which implies that $gx-x\in S_{G}(L)$, i.e. $g[x]=[x]\in A_{S_{G}(L)}$.
\end{proof}

\begin{lemma}
\label{lemma: extending group action}
Suppose $M\hookrightarrow L$ is an embedding of even lattices. Suppose $G< \Aut(M)$ is a finite subgroup such that the induced action on $A_M$ is trivial. Then there exists an action of $G$ on $L$ extending the action of $G$ on $M$, such that any vector of $L$ orthogonal to $M$ is fixed by the action. Moreover, the induced action of $G$ on $A_L$ is trivial.
\end{lemma}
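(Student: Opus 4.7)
The plan is to reduce to the orthogonal complement and exploit the hypothesis that $G$ acts trivially on $A_{M}$. Let $N$ denote the orthogonal complement of $M$ in $L$. Since the bilinear form is non-degenerate, $M\oplus N$ embeds into $L$ as a sublattice of finite index, and hence we obtain a chain
\[
M\oplus N\;\subset\; L\;\subset\; L^{\vee}\;\subset\; (M\oplus N)^{\vee}=M^{\vee}\oplus N^{\vee}.
\]
Define a $G$-action on $M\oplus N$ by letting $G$ act on $M$ via the given action and trivially on $N$; this is clearly isometric (the cross terms vanish because $N\subset M^{\perp}$), and it extends $\BQ$-linearly to an action on $(M\oplus N)_{\BQ}=L_{\BQ}$.

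The key step is to verify that this extension preserves $L$. For $x\in L$, write $x=m^{*}+n^{*}$ uniquely with $m^{*}\in M^{\vee}$ and $n^{*}\in N^{\vee}$. Since $G$ acts trivially on $A_{M}=M^{\vee}/M$, for every $g\in G$ we have $gm^{*}-m^{*}\in M$; hence
\[
gx-x=(gm^{*}-m^{*})+0\;\in\; M\;\subset\; M\oplus N\;\subset\; L,
\]
so $gx\in L$. This gives a well-defined action of $G$ on $L$ by isometries extending the action on $M$.

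The two remaining assertions follow immediately. Any vector $v\in L$ orthogonal to $M$ lies in $N$ by the very definition of $N$, and hence is fixed by our action. For the moreover part, the same display shows that for every $x\in L^{\vee}\subset M^{\vee}\oplus N^{\vee}$ we still have $gx-x\in M\subset L$, so $g$ acts trivially on $A_{L}=L^{\vee}/L$. The only genuine content is the verification $gL\subset L$, which is precisely where the hypothesis that $G$ acts trivially on $A_{M}$ is used; no further obstacle is anticipated.
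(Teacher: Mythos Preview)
Your proof is correct and in fact more direct than the paper's. The paper first reduces to the case where $M$ is primitive in $L$, then primitively embeds $L$ into an even unimodular lattice $N$, extends the $G$-action on $M$ to $N$ by acting as the identity on $M^{\perp}_N$, and finally restricts back to $L$. The key step there---that this extension really preserves $N$---is left implicit and, when unpacked, is exactly the computation you carry out (using that $gm^*-m^*\in M$ because $G$ is trivial on $A_M$). You simply perform this computation directly inside $L$ via the chain $M\oplus N\subset L\subset L^{\vee}\subset M^{\vee}\oplus N^{\vee}$, which avoids invoking the existence of a unimodular overlattice and makes the ``moreover'' clause about $A_L$ fall out of the same line. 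A small bonus of your route is that it works uniformly without first passing to the saturation of $M$: since $M\oplus N$ already has finite index in $L$ (their $\BQ$-spans agree), the inclusion $L^{\vee}\subset M^{\vee}\oplus N^{\vee}$ holds regardless of primitivity.
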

\begin{proof}
We may assume $M$ is primitive in $L$ via replacing it by its saturation in $L$. Take a primitive embedding of $L$ into a unimodular even lattice $N$. The action of $G$ on $M$ extends to an action on $N$ that restricts to identity on the orthogonal complement of $M$ in $N$. Then the restriction of this action to $L$ gives the required action.
\end{proof}

\subsection{Supersingular K3 Lattices}
\label{section: K3 lat}

We begin with the following proposition, in which the lattice encodes the properties of the N\'eron--Severi lattice of a supersingular K3 surface.
\begin{proposition-definition}\cite[\S 1]{RS81}\label{prop:R-S lattice}
For an odd prime $p$, and $1\leq \sigma\leq 10$, there exists a unique lattice, denoted as $N_{p,\sigma}$, with the following properties:
	\begin{itemize}
		\item[(1)] The lattice $N_{p,\sigma}$ is even;
		\item[(2)] The signature of $N_{p,\sigma}$ is (1,21);
		\item[(3)] The discriminant group $A_{N_{p,\sigma}}$ is isomorphic to $(\BZ/p\BZ)^{2\sigma}$.
	\end{itemize}
    And we call $N_{p,\sigma}$ the supersingular K3 lattice in characteristic $p$ and with Artin invariant $\sigma$.
\end{proposition-definition}

In the following proposition, we recall the calculation of the Conway--Sloane symbol of a supersingular K3 lattice.
\begin{proposition}\label{prop:discr form of ss K3 lattice}
	Let $N_{p,\sigma}$ be a supersingular K3 lattice as above. Then we have:
	\begin{enumerate}
\item If $p\equiv 3 \;(\md\;4)$, then:
		\[
	q_{N_{p,\sigma}}=
 \begin{cases}
p^{+2\sigma}, \text{if}\;\sigma\;\text{is odd}, \\
p^{-2\sigma}, \text{if}\;\sigma\;\text{is even}.
\end{cases}
\]
\item If $p\equiv 1 \;(\md\; 4)$, then $q_{N_{p,\sigma}}=p^{-2\sigma}$.
	\end{enumerate}
\end{proposition}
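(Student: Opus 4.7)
The plan is to pin down the sign $\epsilon$ by comparing the signature of $N_{p,\sigma}$ modulo $8$ with the signature $\tau(q_{N_{p,\sigma}})$ of its discriminant form, using Nikulin's theorem \cite[Theorem 1.3.3]{Nik79}: $\tau(q_L)\equiv\operatorname{sign}(L)\pmod{8}$. Since $p$ is odd and $A_{N_{p,\sigma}}\cong(\BZ/p\BZ)^{2\sigma}$, the only available Conway--Sloane decomposition of $q_{N_{p,\sigma}}$ is $p^{\epsilon\,2\sigma}$ for a single sign $\epsilon\in\{+,-\}$, so the whole task reduces to determining this $\epsilon$.

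By Proposition-Definition \ref{prop:R-S lattice}, $\operatorname{sign}(N_{p,\sigma})=1-21=-20\equiv 4\pmod{8}$. One then computes $\tau(p^{\epsilon\,2\sigma})\pmod 8$ via Milgram's formula, using that $\tau$ is additive over orthogonal direct sums together with the well-known one-dimensional values: for $p\equiv 1\pmod 4$, a quadratic-residue factor $\langle a/p\rangle$ contributes $0$ and a non-residue factor contributes $4$ modulo $8$; for $p\equiv 3\pmod 4$, a residue contributes $2$ and a non-residue contributes $-2$.

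Since $\epsilon$ records the product of the Legendre symbols of the one-dimensional summands, in the case $p\equiv 1\pmod 4$ one finds $\tau(p^{+\,2\sigma})\equiv 0$ and $\tau(p^{-\,2\sigma})\equiv 4\pmod 8$ independently of $\sigma$, so matching the target residue $4$ forces $\epsilon=-$, giving statement (2). In the case $p\equiv 3\pmod 4$, decomposing $p^{\epsilon\,2\sigma}$ as $a$ residue summands plus $b=2\sigma-a$ non-residue summands gives $\tau\equiv 2(a-b)\equiv 4a-4\sigma\pmod 8$, and $\epsilon=(-1)^b$ fixes the parity of $a$; a short parity case analysis then produces the dichotomy of (1), namely $\epsilon=+$ when $\sigma$ is odd and $\epsilon=-$ when $\sigma$ is even.

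The only real obstacle is keeping the parities straight in the $p\equiv 3\pmod 4$ case, where the answer depends on an interlocking between the parity of $\sigma$ and the parity of the number of non-residue summands in the decomposition; the $p\equiv 1\pmod 4$ case then follows essentially immediately from the additivity of $\tau$ and the Milgram computation of the two one-dimensional contributions.
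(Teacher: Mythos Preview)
Your approach is the same as the paper's: use $\tau(q_{N_{p,\sigma}})\equiv\operatorname{sign}(N_{p,\sigma})\equiv 4\pmod 8$ and determine the sign $\epsilon$ in $p^{\epsilon\,2\sigma}$ from the one-dimensional values of $\tau$; the paper simply quotes Nikulin's formulas $\tau(p^{+1})\equiv 1-p$ and $\tau(p^{-1})\equiv 5-p\pmod 8$ and reads off the result.

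One small slip worth fixing: your stated one-dimensional contributions (``residue $\to 0$, non-residue $\to 4$'' for $p\equiv 1\pmod 4$, and ``residue $\to 2$, non-residue $\to -2$'' for $p\equiv 3\pmod 4$) are only correct for $p\equiv 1,7\pmod 8$; for $p\equiv 5,3\pmod 8$ the two values are swapped, as Nikulin's formulas show. This does not damage your argument: since the total rank $2\sigma$ is even and the two one-dimensional values always differ by $4\pmod 8$, one gets $\tau(p^{\epsilon\,2\sigma})\equiv 2\sigma\,\tau(p^{+1})+4b\pmod 8$ (with $b$ the number of non-residue summands), and $2\sigma\,\tau(p^{+1})$ is $0\pmod 8$ when $p\equiv 1\pmod 4$ and $4\sigma\pmod 8$ when $p\equiv 3\pmod 4$, regardless of the residue of $p$ mod $8$. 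So your parity analysis and final dichotomy are correct as written.
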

\begin{proof}
Since $N_{p,\sigma}$ has signature $(1,21)$, we have $\tau(q_{N_{p,\sigma}})\equiv 4\; (\md\; 8)$. Since $A_{N_{p,\sigma}}\cong (\BZ/p\BZ)^{2\sigma}$, we know that the discriminant form of $N_{p,\sigma}$ will be $p^{+2\sigma}$ or $p^{-2\sigma}$. Then the proposition follows from the following:
	\[
	\begin{cases}
\tau(p^{+1})\equiv 1-p \;(\md\; 8),\\
\tau(p^{-1})\equiv 5-p \;(\md\; 8).
	\end{cases}
	\]
	(See \cite[Proposition 1.11.2]{Nik79} or \cite[\S 2.1]{NT98}).
\end{proof}
\begin{corollary}
	There is no maximal isotropic subspace of $(A_{N_{p,\sigma}}, q_{N_{p,\sigma}})$ of dimension $\sigma$.
\end{corollary}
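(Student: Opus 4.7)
The plan is to argue by contradiction: suppose there exists a totally isotropic subgroup $H\subset A_{N_{p,\sigma}}$ of $\BF_p$-dimension $\sigma$ (which is the maximal possible, since $A_{N_{p,\sigma}}\cong (\BZ/p\BZ)^{2\sigma}$ has dimension $2\sigma$). Here ``isotropic'' is understood with respect to the quadratic form $q_{N_{p,\sigma}}\colon A_{N_{p,\sigma}}\to\BQ/2\BZ$, so in particular $q_{N_{p,\sigma}}$ vanishes on $H$.

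First I would use the standard correspondence between isotropic subgroups of the discriminant group and even overlattices. Explicitly, set $L\coloneqq\{v\in N_{p,\sigma}^{\vee}\;:\; v\bmod N_{p,\sigma}\in H\}$. Since $q_{N_{p,\sigma}}|_H=0$, the lattice $L$ is even. Since $[L:N_{p,\sigma}]=|H|=p^{\sigma}$, the discriminant of $L$ equals $\disc(N_{p,\sigma})/p^{2\sigma}=p^{2\sigma}/p^{2\sigma}=1$, so $L$ is unimodular. The signature of $L$ agrees with that of $N_{p,\sigma}$, namely $(1,21)$.

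The contradiction comes from the classical constraint that an even unimodular lattice of signature $(r,s)$ must satisfy $r-s\equiv 0\pmod{8}$. Here $r-s=1-21=-20\not\equiv 0\pmod{8}$, which rules out the existence of such an $L$ and hence of such $H$. Equivalently, one could package this as a statement about the signature $\tau(q_{N_{p,\sigma}})$ of the finite quadratic form: for a non-degenerate finite quadratic form admitting a maximal isotropic subspace (i.e.\ a ``split'' or hyperbolic form), $\tau\equiv 0\pmod 8$, whereas Proposition~\ref{prop:discr form of ss K3 lattice} (together with the signature $(1,21)$) gives $\tau(q_{N_{p,\sigma}})\equiv 4\pmod 8$.

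No serious obstacle is expected here; the only point that merits care is checking that the Nikulin correspondence applies in the expected form (evenness of $L$ requires isotropy with respect to the $\BQ/2\BZ$-valued quadratic form, not merely the bilinear form), and that the parity congruence for even unimodular signatures is applied correctly.
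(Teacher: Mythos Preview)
Your argument is correct. Both your proof and the paper's rest on the same underlying obstruction---the signature $(1,21)$ is not divisible by $8$---but the packaging differs. The paper stays on the discriminant-form side: it works out that a split (hyperbolic) quadratic form on $(\BZ/p\BZ)^{2\sigma}$ would have Conway--Sloane symbol $p^{+2\sigma}$ or $p^{-2\sigma}$ according to whether $(-1)^\sigma$ is a square mod $p$, and then checks case by case (in $p\bmod 4$ and $\sigma\bmod 2$) that this is always the \emph{opposite} sign from the one computed in Proposition~\ref{prop:discr form of ss K3 lattice}. Your route via the Nikulin overlattice correspondence is more direct: it immediately produces an even unimodular lattice of signature $(1,21)$ and invokes the $r-s\equiv 0\pmod 8$ constraint, bypassing the symbol computation and the case split entirely. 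Your second formulation (that a split form has $\tau\equiv 0$ while here $\tau\equiv 4$) is essentially the paper's argument stated in one line. The only care point you flagged---that isotropy must be for the $\BQ/2\BZ$-valued $q$, not just the bilinear form, to guarantee the overlattice is even---is exactly right and is handled correctly.
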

\begin{proof}
	We argue by contradiction. If such a maximal isotropic subspace exists, then we have:
	\[
	q_{N_{p,\sigma}}=
	\begin{cases}
p^{+2\sigma}, \text{if}\; (-1)^{\sigma}\;\text{is a quadratic residue}, \\
p^{-2\sigma}, \text{if}\; (-1)^{\sigma}\;\text{is not a quadratic residue}.
	\end{cases}
	\]
	Hence the corollary follows from the Proposition \ref{prop:discr form of ss K3 lattice}.
\end{proof}

\section{Supersingular K3 Surfaces}
\label{section: supersingular K3}
In this section, we first collect some results on supersingular K3 surfaces. In \S\ref{subsec:superspecial}, we specialize to superspecial K3 surfaces. 

\subsection{N\'eron--Severi Lattices}
In this subsection, we focus on N\'eron--Severi lattices of supersingular K3 surfaces in odd characteristics. Let $X$ be a K3 surface in characteristic $p$, then we have:
\[
\Pic(X)\cong \NS(X)\cong\BZ^{\oplus\rho(X)}, 1\leq\rho(X)\leq 22.
\]
It is known that $\NS(X)$ is  an even lattice of signature $(1,\rho(X)-1)$, see \cite[\S 17.2]{Huy16}. The K3 surface $X$ is called (Shioda-)supersingular if $\rho(X)=22$. 
\begin{theorem}\cite{Art74, RS81}
	Let $X$ be a supersingular K3 surface. Then there exists an integer $1\leq \sigma\leq 10$, the Artin invariant, such that:
	\[
	A_{\NS(X)}\cong (\BZ/p\BZ)^{2\sigma}.
	\]
\end{theorem}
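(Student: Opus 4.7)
The plan is to extract the structure of $A_{\NS(X)}$ from the Frobenius action on crystalline cohomology. Set $M := H^2_{\mathrm{cris}}(X/W(k))$, a free $W$-module of rank $22$ equipped with a unimodular Poincar\'e pairing and a $\sigma$-semilinear Frobenius $\phi$. The cycle class map embeds $\NS(X) \otimes \BZ_p \hookrightarrow M^{\phi = p}$, and by Artin's proof of the Tate conjecture for supersingular K3 surfaces this map is an isomorphism after $\otimes \BQ_p$. Since $X$ is supersingular, all Newton slopes of $\phi$ on $M$ are equal to $1$, so we may write $\phi = p\, F_0$ where $F_0$ is a $\sigma$-semilinear bijection of $M$. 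Then $T_0 := M^{F_0 = 1}$ is a $\BZ_p$-lattice of rank $22$ inside $M \otimes_W \mathrm{Frac}(W)$ carrying the restricted Poincar\'e form, and up to isomorphism $\NS(X) \otimes \BZ_p = T_0$. For $\ell \neq p$ the corresponding $\ell$-adic argument shows $\NS(X) \otimes \BZ_\ell$ is unimodular, so $\disc \NS(X)$ is (up to sign) a power of $p$.

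To see that $A_{\NS(X)}$ is $p$-elementary, I would use that $M$ is $W$-unimodular and $T_0 \otimes_{\BZ_p} W \subset M$, so the inclusion $T_0 \hookrightarrow T_0^\vee$ is controlled by how far $T_0 \otimes W$ sits from $M$. Because $F_0$ is a $\sigma$-semilinear bijection, its reduction modulo $p$ on $\overline M = M/pM$ is a $p$-linear endomorphism of a $22$-dimensional $\BF_p$-vector space, and a short computation in Dieudonn\'e theory forces $p \cdot T_0^\vee \subset T_0$. Hence $A_{T_0}$ is annihilated by $p$, which is precisely the statement that $A_{\NS(X)}$ is an $\BF_p$-vector space.

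For the parity of the rank, the non-degenerate discriminant form $q_{\NS(X)}$ takes values in $\tfrac{1}{p}\BZ/\BZ$ and can therefore be viewed (after scaling by $p$) as a non-degenerate quadratic form over $\BF_p$ on $A_{\NS(X)}$. Combined with the signature constraint (the signature of $\NS(X)$ is $(1,21)$, so $\tau(q_{\NS(X)}) \equiv 4 \pmod 8$), the rank must be even, which I would denote $2\sigma$. The lower bound $\sigma \ge 1$ is immediate from van der Blij's signature obstruction, since $\sigma = 0$ would make $\NS(X)$ even unimodular with signature $\not\equiv 0 \pmod 8$.

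The step I expect to be the main obstacle is the upper bound $\sigma \le 10$. The natural approach is to exploit the Hodge filtration on $M \otimes_W k$: the Hodge line $F^2 \subset M \otimes k$ and its orthogonal $F^1$ of codimension one must interact non-trivially with the reduction of $T_0$, forcing a constraint on the totally isotropic subspaces of the $\BF_p$-quadratic form on $A_{\NS(X)}$. More precisely, I would try to show that $A_{\NS(X)}$ admits no totally isotropic subspace of dimension exceeding $\sigma$; combined with Witt-theoretic facts about non-degenerate $\BF_p$-quadratic forms of rank $2\sigma$, this rules out $\sigma = 11$. This is the technical heart of Artin's original argument, and making it precise requires a careful comparison between the crystalline and de Rham realizations that the earlier paragraphs of the paper already lean on through the results of Ogus and Bragg--Lieblich.
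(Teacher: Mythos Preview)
The paper does not prove this theorem; it is quoted from the literature (Artin and Rudakov--\v{S}afarevi\v{c}) and used as input. So there is no ``paper's own proof'' to compare against, and your proposal is really a sketch of how the classical argument goes.

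That said, two points in your sketch are not right as stated.

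First, the claim that $\phi = pF_0$ with $F_0$ a $\sigma$-semilinear \emph{bijection of $M$} is false. For a K3 surface the abstract Hodge polygon of $H^2_{\mathrm{cris}}$ has slopes $(0,1,\ldots,1,2)$, not all $1$'s; equivalently $\phi(M)\not\subset pM$. What is true in the supersingular case is that the Newton slopes are all $1$, so $\phi/p$ is an isogeny on $M\otimes_W\mathrm{Frac}(W)$, and one must pass to a suitable $\phi/p$-stable $W$-lattice $T_0\supset M$ (the one generated by the iterates $(\phi/p)^iM$) before speaking of $T_0^{\phi=p}$. The inclusions $M\subset T_0$ and $pT_0\subset M$ are what actually feed into the $p$-elementary conclusion; your ``short computation in Dieudonn\'e theory'' is hiding exactly this step.

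Second, your argument for $\sigma\le 10$ is vacuous as written: a non-degenerate quadratic space of rank $2\sigma$ over $\BF_p$ \emph{never} has a totally isotropic subspace of dimension exceeding $\sigma$, so this cannot exclude anything. The genuine input is that the kernel of the cycle class map $N\otimes k\to H^2_{\mathrm{dR}}(X/k)$ is a $\sigma$-dimensional totally isotropic subspace of $(pN^\vee/pN)\otimes k$ satisfying the Frobenius-jump condition (the characteristic subspace of Ogus that the paper recalls in Proposition~\ref{prop: char of N0}); its existence, together with the Hodge line in $H^2_{\mathrm{dR}}$, is what bounds $\sigma$. The non-neutrality of the discriminant form (no Lagrangian over $\BF_p$) is a separate fact---the paper records it as the corollary to Proposition~\ref{prop:discr form of ss K3 lattice}---and does not by itself bound $\sigma$.
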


By Proposition \ref{prop:R-S lattice}, for a supersingular K3 surface $X$ with Artin invariant $\sigma$, we have $\NS(X)\cong N_{p,\sigma}$.

\begin{definition}\cite[Definition 3.19]{Ogu79}\cite[Definition 4.14]{Lie13}
	Let $\sigma\geq 1$ be an integer and $V$ be a $2\sigma$-dimensional $\BF_{p}$-vector
	space endowed with a nondegenerate and nonneutral quadratic form. A subspace $K\subset V\otimes_{\BF_{p}} k$ is called characteristic if
	\begin{itemize}
		\item[(a)] $K$ is totally isotropic of dimension $\sigma$, i.e., all the vectors in $K$ are isotropic, and
		\item[(b)] $K+(Id\otimes \Fr_{k})(K)$ is of dimension $\sigma+1$. Here $\Fr_k: k\rightarrow k$ is the Frobenius map.
	\end{itemize}
\end{definition}

\subsection{Crystalline Torelli Theorem for Supersingular K3 Surfaces}
We review the crystalline Torelli theorem for marked supersingular $K3$ surfaces.  

Let $X$ be a supersingular K3 surface with Artin invariant  $\sigma$. Next we write $N=N_{p, \sigma}$ for short. An $N$-marking of $X$ is a lattice isomorphism $\phi: N \rightarrow \NS(X)$. Let $(X,\phi), (Y, \psi)$ be two $N$-marked K3 surfaces. An isomorphism between $(X,\phi)$ and $(Y,\psi)$ is an isomorphism $\theta: X\rightarrow Y$ such that $\phi=\theta^{*}\circ\psi$. We define:
$$
\ker(X,\phi):=\ker(c_{k}:N\otimes k\rightarrow H^{2}_{dR}(X/k)).
$$
Here $c_{k}$ is the composition of $N\otimes k\rightarrow\NS(X)\otimes k$ and $\NS(X)\otimes k\rightarrow H^{2}_{crys}(X/W)\otimes k\cong H^{2}_{dR}(X/k)$. The second map is induced by the crystalline Chern class map. Ogus \cite[Proposition 4.3]{Ogu79} showed that $\ker(X,\phi)$ is a subspace of $(pN^{\vee}/pN) \otimes_{\BF_{p}}k$. We then put $K(X,\phi)\coloneqq (id\otimes\Fr_{k})^{-1}(\ker(X,\phi))$. 
\begin{proposition}\cite[\S 1.8]{Ogu83}\label{prop: char of N0}
	Both $\ker(X,\phi)$ and $K(X,\phi)$ are characteristic subspaces of $(pN^{\vee}/pN) \otimes_{\BF_{p}}k$.
\end{proposition}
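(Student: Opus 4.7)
Following the approach of Ogus \cite{Ogu79}, the plan is to identify $\ker(X,\phi)$ and $K(X,\phi)$ explicitly as subspaces of $(pN^{\vee}/pN)\otimes k$ using the crystalline cohomology $H:=H^{2}_{crys}(X/W)$ and its Frobenius structure, and then verify the two conditions of a characteristic subspace in turn.

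First I would fix the lattice picture. The crystalline Chern class identifies $N$ with its image in $H$, which is a unimodular $W$-lattice of rank $22$ carrying a perfect symmetric bilinear form. Comparing discriminants ($\det N=\pm p^{2\sigma}$, $\det H=\pm 1$) forces $[H:N\otimes W]=p^{\sigma}$, and since the quotient has exponent $p$ one obtains $pH\subseteq N\otimes W$. A short computation then identifies $\ker(c_k)$ with $pH/p(N\otimes W)$ inside $(N\otimes W)/p(N\otimes W)=N\otimes k$, and this lies in $(pN^{\vee}/pN)\otimes k$ because $H\subseteq N^{\vee}\otimes W$ by the self-duality of the form on $H$. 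Condition (a) is then essentially bookkeeping: the dimension $\sigma$ is immediate from $[pH:p(N\otimes W)]=p^{\sigma}$, and for isotropy I would identify $pN^{\vee}/pN\cong N^{\vee}/N=A_N$ by dividing by $p$; under this identification the induced form corresponds to the discriminant bilinear form on $A_N$, and $H/(N\otimes W)\subseteq A_N\otimes k$ is totally isotropic because the self-duality of $H$ gives $(H,H)\subseteq W$.

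The main obstacle is condition (b), which encodes the interaction between the crystalline Frobenius $F:H\to H$ (a Witt-vector-semilinear operator satisfying $F\circ c_1=p\cdot c_1$ on $N$) and the integral lattice $N$. Reduction modulo $p$ gives $\bar F=\Fr_k$ on $\bar H=H/pH$, and since $F\,c_1(N)\subseteq pH$, the reduction $\bar F$ annihilates the image of $N\otimes k\to\bar H$. Combining Mazur's divisibility $F(\mathrm{Fil}^{i}H)\subseteq p^{i}H$ with the one-dimensionality of $H^{0}(\omega_X)$ for a K3 surface forces the image of $\bar F$ to be exactly one-dimensional. Pulling this back through the identification $\ker(X,\phi)=(pH/p(N\otimes W))\otimes k$ yields that $\ker(X,\phi)$ fails to descend to an $\BF_p$-rational subspace of $pN^{\vee}/pN$ by precisely one dimension, so that $\dim_k\bigl(\ker(X,\phi)+(\mathrm{id}\otimes\Fr_k)(\ker(X,\phi))\bigr)=\sigma+1$. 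Making this dimension count rigorous is the technical heart of the argument and is where the $F$-crystal structure of a supersingular K3 enters essentially. Once (b) is verified for $\ker(X,\phi)$, the same properties transfer to $K(X,\phi)=(\mathrm{id}\otimes\Fr_k)^{-1}(\ker(X,\phi))$ because $\mathrm{id}\otimes\Fr_k$ is a semilinear automorphism of $(pN^{\vee}/pN)\otimes k$ preserving both isotropy and the relevant dimension formulas.
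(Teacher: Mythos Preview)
The paper does not supply its own proof of this proposition; it is simply quoted from Ogus \cite[Proposition 4.3]{Ogu79} and \cite[\S 1.8]{Ogu83} as an established input, with no argument given. So there is nothing in the paper to compare your proposal against.

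That said, your outline is a faithful reconstruction of Ogus's original argument: the index computation $[H:N\otimes W]=p^{\sigma}$, the identification $\ker(X,\phi)=pH/p(N\otimes W)$, isotropy from the self-duality of $H$, and the use of Mazur's theorem with the Hodge numbers $(1,20,1)$ to control condition (b) are exactly the ingredients he uses. Two small remarks. First, you write $\ker(X,\phi)=(pH/p(N\otimes W))\otimes k$, but that quotient is already a $k$-vector space, so drop the extra $\otimes k$. Second, and more substantively, the step from ``$\bar F$ has one-dimensional image'' to the dimension formula $\dim\bigl(\ker(X,\phi)+(\mathrm{id}\otimes\Fr_k)(\ker(X,\phi))\bigr)=\sigma+1$ is the one place where the argument is genuinely incomplete as written. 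The missing link is the identification of $p^{-1}F$ on $N\otimes W$ with $\mathrm{id}_N\otimes\sigma_W$ (coming from $F\,c_1=p\,c_1$), so that after reduction $\mathrm{id}\otimes\Fr_k$ on $(pN^{\vee}/pN)\otimes k$ is the shadow of the crystalline Frobenius; the one-dimensional failure of $p^{-1}F$ to preserve $H$ (governed by $\mathrm{Fil}^2$) then translates directly into the ``$+1$''. Ogus carries this out carefully in \cite[\S 3]{Ogu79}, and if you intend this as a standalone proof rather than a sketch you should spell out that identification.
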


 We denote the set of roots in $N$ by $\Delta$. A line bundle $L$ on a K3 surface is ample if and only if $L\cdot L>0$, $L\cdot E>0$ for all effective divisors $E$ with $E\cdot E=-2$ and $h^0(L)\ne 0$. Denote by $\Amp(X)$ the ample cone of $X$. Then $\phi^{-1}(\Amp(X))$ is contained in a unique connected component of
$
\{x\in N\otimes \BR|x\cdot x>0,x\cdot r\neq 0, \forall r\in\Delta\}
$. For more details of the above statements, see \cite[\S 1.10]{Ogu83}.
Now we can state the following version of the crystalline Torelli theorem using ample cones for $N$-marked supersingular K3 surfaces.
\begin{theorem}\cite{Ogu83,BL18}\label{thm:marked crys torelli}
	Let $(X,\phi)$ and $(Y,\psi)$ be two $N$-marked supersingular K3 surfaces. If 
$$(K(X, \phi), \phi^{-1}(\Amp(X))=(K(Y, \psi), \psi^{-1}(\Amp(Y)),$$	
then there is a unique isomorphism $f:(X,\phi)\rightarrow (Y,\psi)$. 
\end{theorem}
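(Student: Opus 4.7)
The plan is to interpret the equality of periods as an isomorphism of F-crystals preserving ample cones, and then upgrade this cohomological datum to a geometric isomorphism of K3 surfaces. This is the standard shape of any Torelli-type argument, with the novelty in the supersingular setting being the replacement of Hodge structures by F-crystals and of period points by characteristic subspaces.

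First I would show that $K(X,\phi)$ reconstructs the F-crystal $H^{2}_{crys}(X/W)$ together with its $N$-marking. The crystalline Chern class map factors the inclusion $N\otimes W \hookrightarrow H^{2}_{crys}(X/W)$ through $\NS(X)\otimes W$, and by Proposition~\ref{prop: char of N0} the reduction modulo $p$ of the Frobenius action on $(pN^{\vee}/pN)\otimes k$ is encoded by $K(X,\phi)$. A lattice-theoretic argument (carried out in \cite{Ogu79, Ogu83}) then promotes this mod-$p$ datum to a full reconstruction of the F-crystal structure on $N\otimes W$. Consequently the hypothesis yields an F-crystal isomorphism $H^{2}_{crys}(X/W)\cong H^{2}_{crys}(Y/W)$ compatible with the markings and sending $\phi^{-1}(\Amp(X))$ to $\psi^{-1}(\Amp(Y))$.

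Second, I would invoke the crystal-level Torelli theorem to produce $f$: any F-crystal isomorphism between the second crystalline cohomologies of two supersingular K3 surfaces that sends ample cone to ample cone is induced by a unique isomorphism of K3 surfaces. The ample-cone hypothesis is essential here — without it one only recovers $f$ up to composition with reflections in $(-2)$-classes, which geometrically corresponds to birational modifications that may fail to extend to isomorphisms. Uniqueness of $f$ is then easy: two isomorphisms inducing the same map on $H^{2}_{crys}$ differ by an automorphism acting trivially on $H^{2}$, which must be the identity by the standard rigidity for K3 automorphisms.

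The hard part is precisely the geometric step, i.e. producing $f$ from the F-crystal isomorphism, and this is the deep content of the cited references. For $p\geq 5$ Ogus carries it out by lifting $X$ and $Y$ to projective K3 surfaces over a mixed-characteristic base, applying the classical complex-analytic Torelli theorem to the lifts, and descending via a specialization/rigidity argument. For $p=3$ lifting is not available in a sufficiently controlled way, and Bragg--Lieblich instead construct twistor families of supersingular K3 surfaces that interpolate between any two supersingular K3s with matching periods inside a single connected family, concluding by parallel transport along the family. I would not attempt to redo this step in the present paper but cite it as a black box from \cite{Ogu83, BL18}.
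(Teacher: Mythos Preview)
Your sketch is correct as an outline, but it is worth noting that the paper does not actually prove this theorem at all: it is stated with attribution to \cite{Ogu83,BL18}, and the surrounding text simply records that the formulation given here is \cite[Theorem~II$''$]{Ogu83}, which is equivalent to the crystalline Torelli theorem \cite[Theorem~II]{Ogu83}, proved by Ogus for $p\ge 5$ and by Bragg--Lieblich \cite[Theorem~5.1.9]{BL18} for $p\ge 3$. There is no further argument in the paper.

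Your proposal thus goes beyond what the paper does: you unpack why the characteristic-subspace datum $K(X,\phi)$ is equivalent to the $N$-marked $F$-crystal, and you outline how Ogus (lifting plus classical Torelli) and Bragg--Lieblich (supersingular twistor families) handle the geometric step. This is accurate and helpful context, but since both you and the paper ultimately cite the hard step as a black box, the approaches agree at the level of what is actually being claimed in this paper.
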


The theorem we state here appears as \cite[Theorem II$''$]{Ogu83} for $p\geq 5$. It is equivalent to the crystalline Torelli theorem \cite[Theorem II]{Ogu83} which was  proved there for $p\geq 5$ and by Bragg--Lieblich \cite[Theorem 5.1.9]{BL18} for $p\ge 3$. Thus Theorem \ref{thm:marked crys torelli} holds for all odd characteristics.

The following proposition is the key bridge for us to characterize finite groups of symplectic automorphisms of $X$.

\begin{proposition}\label{prop:lat description of sym auto}
For supersingular K3 surface $X$, we have the following isomorphism:
	\[
	\Aut^{s}(X)\cong \{g\in \mathrm{O}^{+}(\NS(X))| g\;\text{is identity on } A_{\NS(X)}\}
	\]
	here $\mathrm{O}^{+}$ means preserving the pairing and ample cone.
\end{proposition}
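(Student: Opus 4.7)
The plan is to combine the crystalline Torelli theorem (Theorem \ref{thm:marked crys torelli}) for supersingular K3 surfaces with the characterization that an automorphism is symplectic if and only if its induced action on the discriminant group $A_{\NS(X)}$ is trivial, as established by Jang \cite{Jan16} in odd characteristic.

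For the injection $\Aut^{s}(X)\hookrightarrow\{g\in\mathrm{O}^{+}(\NS(X))\mid g|_{A_{\NS(X)}}=\mathrm{id}\}$, I would first note that any $g\in\Aut(X)$ induces $g^{*}\in\mathrm{O}(\NS(X))$ preserving the ample cone, hence lying in $\mathrm{O}^{+}$. When $g$ is symplectic, Jang's theorem forces $g^{*}$ to act trivially on $A_{\NS(X)}$. Injectivity holds because $X$ is projective: an automorphism acting trivially on $\NS(X)$ fixes an ample class, and the standard rigidity argument for K3 surfaces then forces it to be the identity.

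For the surjection, suppose $g\in\mathrm{O}^{+}(\NS(X))$ acts trivially on $A_{\NS(X)}$. Fix an $N$-marking $\phi: N\to\NS(X)$ and consider the two markings $\phi$ and $\phi\circ g^{-1}$ of $X$. To apply Theorem \ref{thm:marked crys torelli}, I verify
\[
(K(X,\phi),\, \phi^{-1}(\Amp(X))) = (K(X,\phi\circ g^{-1}),\, (\phi\circ g^{-1})^{-1}(\Amp(X))).
\]
The ample cone equality is immediate since $g\in\mathrm{O}^{+}$ preserves $\phi^{-1}(\Amp(X))$. The characteristic subspace lives inside $(pN^{\vee}/pN)\otimes k \cong A_{N}\otimes k$; since $g$ acts trivially on $A_{N}$ (transported from $A_{\NS(X)}$ via $\phi$), it acts trivially on $A_{N}\otimes k$ and so fixes $K(X,\phi)$ pointwise, giving $K(X,\phi)=K(X,\phi\circ g^{-1})$. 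The crystalline Torelli theorem then produces a unique $f\in\Aut(X)$ with $f^{*}=g$ via $\phi$. The remaining task is to show $f$ is symplectic, which is the converse direction of Jang's characterization: since $f^{*}|_{A_{\NS(X)}}=\mathrm{id}$, $f$ must act trivially on $H^{0}(X,\omega_{X})$.

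The main obstacle is securing the biconditional \emph{$g$ symplectic $\Leftrightarrow$ $g^{*}|_{A_{\NS(X)}}=\mathrm{id}$} in odd characteristic without a tameness assumption, as both halves are needed: the forward direction for the injection, and the backward direction to upgrade the automorphism produced by Torelli to a symplectic one. Once both directions of this equivalence are in hand, the remainder is a clean bookkeeping check that the two hypotheses of the crystalline Torelli theorem (equality of characteristic subspaces and equality of pulled-back ample cones) follow tautologically from $g\in\mathrm{O}^{+}$ and the discriminant-triviality of $g$.
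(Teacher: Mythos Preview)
Your proposal is correct and follows essentially the same approach as the paper: invoke Jang's biconditional (symplectic $\Leftrightarrow$ trivial on $A_{\NS(X)}\otimes k$, hence on $A_{\NS(X)}$) and then apply the crystalline Torelli theorem (Theorem \ref{thm:marked crys torelli}), using Proposition \ref{prop: char of N0} to see that the characteristic subspace lives inside $A_{\NS(X)}\otimes k$ and is therefore fixed by any $g$ acting trivially on the discriminant. The paper's proof is more terse and leaves the two-marking bookkeeping implicit, but the ingredients and logic are identical; note only the minor slip that $\phi\circ g^{-1}$ should read $g^{-1}\circ\phi$ (or equivalently $\phi\circ(\phi^{-1}g^{-1}\phi)$) for the composition to make sense.
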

\begin{proof}
Jang \cite[Proposition 3.1]{Jan16} shows that for any $g\in \Aut(X)$, $g$ acts on $A_{\NS(X)}\otimes_{\BF_{p}} k$ trivially if and only if $g$ is symplectic. See also \cite[Theorem 2.1]{Nyg80} and \cite[Theorem 3.1]{OS24}. By Proposition \ref{prop: char of N0}, $K$ is contained in 
\begin{equation*}
(p\NS(X)^{\vee}/p\NS(X))\otimes k\subset A_{\NS(X)}\otimes k. 
\end{equation*}
Then this proposition is a direct result of Theorem \ref{thm:marked crys torelli}.
\end{proof}
\subsection{A Lattice-Theoretic Interpretation of Symplectic Automorphism Groups}
By \cite[Theorem 1.14.4]{Nik79}, there exists a unique primitive embedding:
\[
N_{p,\sigma}\hookrightarrow\II_{9,41}
\]
up to automorphisms of $\II_{9,41}$. Here the indices $9,41$ are simply chosen large enough to fulfill Nikulin's criterion. We denote by $\BD_{p,\sigma}$ the orthogonal complement of $N_{p,\sigma}$ in $\II_{9,41}$. Then $q_{\small{\BD_{p,\sigma}}}=-q_{N_{p, \sigma}}$ and $\sgn(\BD_{p,\sigma})=(8,20)$. Such a lattice $\BD_{p,\sigma}$ is unique up to isomorphism and the orthogonal complement of any primitive embedding of $\BD_{p,\sigma}$ in $\II_{9,41}$ is isomorphic to $N_{p,\sigma}$. We call $\BD_{p,\sigma}$ the auxiliary lattice. The next proposition will be used to prove Proposition \ref{proposition: criterion}.

\begin{proposition}
\label{prop:lat criterion}
	For a negative definite lattice $S$ with $\rk(S)\leq 21$, the following are equivalent:
	\begin{enumerate}
		\item there exists a primitive embedding $S\hookrightarrow N_{p,\sigma}$;
		\item there exists an embedding $S\oplus\BD_{p,\sigma}\hookrightarrow\II_{9,41}$ such that the image of $S$ is primitive.
	\end{enumerate}
\end{proposition}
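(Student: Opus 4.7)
The strategy is to use the defining primitive embedding $N_{p,\sigma}\hookrightarrow\II_{9,41}$, with orthogonal complement $\BD_{p,\sigma}$, as a bridge in both directions. The two directions are organized around transitivity of primitivity and the uniqueness statement already recalled in the excerpt.

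\textbf{For $(1)\Rightarrow(2)$.} I would simply compose the given primitive embedding $S\hookrightarrow N_{p,\sigma}$ with the defining primitive embedding $N_{p,\sigma}\hookrightarrow\II_{9,41}$. The composition is primitive: if $v\in\II_{9,41}$ satisfies $nv\in S$ for some $n\geq 1$, then primitivity of $N_{p,\sigma}$ in $\II_{9,41}$ forces $v\in N_{p,\sigma}$, and primitivity of $S$ in $N_{p,\sigma}$ then forces $v\in S$. Since $\BD_{p,\sigma}=N_{p,\sigma}^{\perp}$ in $\II_{9,41}$ and $S\subset N_{p,\sigma}$, the orthogonality $S\perp\BD_{p,\sigma}$ is automatic, yielding the required embedding $S\oplus\BD_{p,\sigma}\hookrightarrow\II_{9,41}$ with $S$ primitive.

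\textbf{For $(2)\Rightarrow(1)$.} Given an embedding $\iota\colon S\oplus\BD_{p,\sigma}\hookrightarrow\II_{9,41}$ with $\iota(S)$ primitive, let $\widetilde{\BD}$ denote the saturation of $\iota(\BD_{p,\sigma})$ in $\II_{9,41}$. First, $\iota(S)\perp\widetilde{\BD}$: any $w\in\widetilde{\BD}$ satisfies $nw\in\iota(\BD_{p,\sigma})\perp\iota(S)$ for some $n\geq 1$, and integrality of the bilinear form then gives $(w,\iota(s))=0$ for every $s\in S$. Set $M:=\widetilde{\BD}^{\perp}\subset\II_{9,41}$. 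Then $M$ is a primitive rank-$22$ sublattice of signature $(1,21)$ containing $\iota(S)$, and $\iota(S)$ is primitive in $M$ by transitivity (if $v\in M$ has $nv\in\iota(S)$, then primitivity of $\iota(S)$ in $\II_{9,41}$ already puts $v\in\iota(S)$).

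The remaining task is to identify $M$ with $N_{p,\sigma}$; by Proposition-Definition \ref{prop:R-S lattice} it suffices to check $A_M\cong(\BZ/p\BZ)^{2\sigma}$. Since $M$ and $\widetilde{\BD}$ together span a finite-index sublattice of the unimodular $\II_{9,41}$, we have $q_M=-q_{\widetilde{\BD}}$, so $A_M\cong A_{\widetilde{\BD}}\cong H^{\perp}/H$ where $H=\widetilde{\BD}/\iota(\BD_{p,\sigma})$ is an isotropic subgroup of $A_{\BD_{p,\sigma}}\cong(\BZ/p\BZ)^{2\sigma}$.

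\textbf{Main obstacle.} The expected difficulty is showing $H=0$, i.e.\ that $\iota(\BD_{p,\sigma})$ is already primitive in $\II_{9,41}$, so that no information is lost passing to $\widetilde{\BD}$. I would attack this by invoking the uniqueness statement recalled in the excerpt: the primitive embedding of $\BD_{p,\sigma}$ into $\II_{9,41}$ is unique up to $\Aut(\II_{9,41})$, with orthogonal complement $N_{p,\sigma}$. Applying an automorphism of $\II_{9,41}$, one reduces to the standard primitive copy and then checks that a strict enlargement $\widetilde{\BD}\supsetneq\iota(\BD_{p,\sigma})$ would force $A_{\widetilde{\BD}}$ and hence $A_M$ to be a proper quotient of the expected form, contradicting the discriminant form of the orthogonal complement produced by the canonical primitive embedding. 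Alternatively, a direct Nikulin-type argument based on the discriminant form and the signature $(8,20)$ of $\BD_{p,\sigma}$ rules out any proper even overlattice that could sit primitively in $\II_{9,41}$ with complement of rank $22$, again forcing $H=0$. Either route yields $M\cong N_{p,\sigma}$ and therefore a primitive embedding $S\hookrightarrow N_{p,\sigma}$ as required.
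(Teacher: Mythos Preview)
Your $(1)\Rightarrow(2)$ is correct and matches the paper. For $(2)\Rightarrow(1)$ the paper's proof is one line---$S\hookrightarrow\iota(\BD_{p,\sigma})^{\perp}\cong N_{p,\sigma}$, with primitivity inherited from $\II_{9,41}$---and your first two paragraphs in this direction reproduce exactly that. The divergence is your ``Main obstacle'', where you flag that the isomorphism $\iota(\BD_{p,\sigma})^{\perp}\cong N_{p,\sigma}$ needs $\iota(\BD_{p,\sigma})$ to be primitive. Neither of your proposed fixes works. Invoking uniqueness of the primitive embedding of $\BD_{p,\sigma}$ into $\II_{9,41}$ is circular: that statement only transports one \emph{primitive} copy to another and cannot certify primitivity of an a priori arbitrary $\iota(\BD_{p,\sigma})$. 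Your alternative claim---that $\BD_{p,\sigma}$ admits no proper even overlattice sitting primitively in $\II_{9,41}$---is false for $\sigma\geq2$: any nonzero isotropic $H\subset A_{\BD_{p,\sigma}}\cong(\BZ/p\BZ)^{2\sigma}$ (and these exist, the Witt index of the form being $\sigma-1\geq1$) produces an overlattice isomorphic to $\BD_{p,\sigma-\dim H}$, which by its very construction embeds primitively in $\II_{9,41}$.

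For $\sigma=1$---the only case the paper actually applies (Theorem~\ref{theorem: superspecial list})---your worry is vacuous: the rank-two form $q_{\BD_{p,1}}=-q_{N_{p,1}}$ is anisotropic (this is exactly the $\sigma=1$ instance of the corollary following Proposition~\ref{prop:discr form of ss K3 lattice}), so $\BD_{p,1}$ has no proper even overlattice and every embedding into $\II_{9,41}$ is automatically primitive. For $\sigma\geq2$, however, your sketches leave a genuine gap.
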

\begin{proof}
	Assuming (1), then there is a primitive embedding $S\oplus \BD_{p,\sigma}\hookrightarrow N_{p,\sigma}\oplus\BD_{p,\sigma}$. By definition of $\BD_{p,\sigma}$, we have a saturation $N_{p,\sigma}\oplus\BD_{p,\sigma}\hookrightarrow\II_{9,41}$ with $N_{p,\sigma}$ primitive. The composition of the above two embeddings gives rise to an embedding $S\oplus \BD_{p,\sigma}\hookrightarrow\II_{9,41}$ such that the image of $S$ is primitive.
	
	Conversely, if (2) holds, we have $S\hookrightarrow \BD_{p,\sigma}^{\perp}\cong N_{p,\sigma}$, which is apparently primitive.
\end{proof}

The following lemma is essential for further classifications:
\begin{lemma}
\label{lemma: leech pair from geo}
Let $X$ be a supersingular K3 surface and $G\le \Aut^s(X)$ a finite subgroup. Then $(G, S_G(X))$ is a Leech pair.
\end{lemma}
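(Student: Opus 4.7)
The plan is to verify, in turn, each condition in Definition \ref{definition: leech pair}: that $S_G(X)$ is a negative definite even lattice, that it is rootless, that the induced $G$-action on $A_{S_G(X)}$ is trivial, that $S_G(X)^G=0$, and finally that the induced representation of $G$ on $S_G(X)$ is faithful.

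First, the existence of a $G$-invariant ample class drives the argument. For any ample class $H\in \NS(X)$, the averaged class $H_G\coloneqq \sum_{g\in G} g^*H$ lies in $\NS(X)^G$ and, being a positive $\BR$-combination of ample classes, still has positive self-intersection. Since $\NS(X)$ has signature $(1,21)$, this forces $\NS(X)^G$ to contain the entire $+$-direction and $S_G(X)$ to be negative definite (and, of course, still even). For rootlessness, suppose $r\in S_G(X)$ satisfies $r\cdot r=-2$. Since $r\in \NS(X)$, Riemann--Roch makes $\pm r$ effective, so $|r\cdot H_G|>0$ by ampleness of $H_G$; but $r\perp H_G$ because $H_G\in \NS(X)^G$ and $r\in S_G(X)$, a contradiction. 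Hence $S_G(X)$ is rootless.

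Next, I would invoke Jang's theorem (used already in the proof of Proposition \ref{prop:lat description of sym auto}) to conclude that the action of any symplectic automorphism on $A_{\NS(X)}$ is trivial. Then Lemma \ref{lemma:trivial action}, applied to $L=\NS(X)$, transfers this to show the induced $G$-action on $A_{S_G(X)}$ is trivial. The condition $S_G(X)^G=0$ is immediate from the very definition of $S_G(X)$ as the orthogonal complement of the invariant sublattice, since any $G$-fixed vector of $S_G(X)$ would simultaneously lie in $\NS(X)^G$ and be orthogonal to it, forcing it to be zero (here we use that $\NS(X)$ is nondegenerate).

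Finally, I would verify faithfulness. Suppose $g\in G$ acts trivially on $S_G(X)$; then $g$ also acts trivially on $\NS(X)^G$ by definition, hence acts trivially on the finite-index sublattice $\NS(X)^G\oplus S_G(X)\subset \NS(X)$. Since $g$ is a lattice automorphism of $\NS(X)$, it must then act as the identity on all of $\NS(X)$. By Proposition \ref{prop:lat description of sym auto} (itself rooted in the crystalline Torelli theorem), this forces $g=\mathrm{id}$ in $\Aut(X)$, proving faithfulness. The main subtlety is the rootlessness step, which hinges on having a genuine $G$-invariant ample class rather than just a class of positive norm; once that is in hand, the remaining items are essentially bookkeeping using Jang's theorem and the general lattice Lemma \ref{lemma:trivial action}.
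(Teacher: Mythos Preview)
Your proposal is correct and follows essentially the same route as the paper: obtain a $G$-invariant ample class by averaging, deduce negative definiteness from the signature, use Riemann--Roch/effectivity to rule out roots, and transfer triviality on $A_{\NS(X)}$ (via Proposition \ref{prop:lat description of sym auto}) to $A_{S_G(X)}$ using Lemma \ref{lemma:trivial action}. You are simply more explicit than the paper in checking the remaining Leech-pair conditions (faithfulness and $S_G(X)^G=0$), which the paper leaves implicit.
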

\begin{proof}
Since $G$ is finite, there exists a $G$-invariant ample class on $X$. Since $\sgn(\NS(X))=(1,21)$, we know that the lattice $S_G(X)$ is negative definite. By Proposition \ref{prop:lat description of sym auto}, $G$ acts trivially on $A_{\NS(X)}$ , then $G$ acts trivially on $A_{S_{G}(X)}$ by Lemma \ref{lemma:trivial action}. Since $S_G(X)$ is orthogonal to the ample class, it cannot contain any effective divisors. Therefore, there are no roots in $S_{G}(X)$.
\end{proof}

We now prove Proposition \ref{proposition: criterion} which is a criterion of Leech pairs arising from symplectic actions of finite groups on supersingular K3 surfaces.

\begin{proof}[Proof of Proposition \ref{proposition: criterion}]
    It is clear that (1) implies (2) and (2) implies (3). Next we show that (3) implies (1). We denote by $H$ the orthogonal complement of $S$ in $N_{p,\sigma}$. By Lemma \ref{lemma: extending group action}, the action of $G$ on $S$ extends to $N_{p,\sigma}$ such that $G$ acts trivially on $A_{N_{p,\sigma}}$ and $H$.
	
	Next, we show the existence of a vector $h\in H$ with positive norm, such that $h_{N_{p,\sigma}}^{\perp}$ contains no roots. Let $C$ be the set of elements in $H$ with positive norm. By \cite[Proposition 1.10]{Ogu83}, the chambers in $(N_{p,\sigma})_{\BR}$ with respect to all roots in $N_{p,\sigma}$ are open. Since $S$ contains no roots, there exists a chamber $\sV$ such that $\sV\cap C_{\BR}\neq \{0\}$. Since $\sV$ is open, we have $\sV\cap C_{\BQ}\neq\{0\}$. By taking an integral scalar of a nonzero element in $\sV\cap H_{\BQ}$, we obtain a primitive vector $h\in H$ with positive norm, such that $h_{N_{p,\sigma}}^{\perp}$ contains no root. By \cite[Proposition 1.10]{Ogu83}, for each supersingular K3 surface $X/k$ with Artin invariant $\sigma$, there exists an isomorphism $f\colon N_{p,\sigma}\cong \NS(X)$, such that $f(h)\in\NS(X)$ is ample. The action on $N_{p,\sigma}$ naturally induces an action of $G$ on $\NS(X)$, such that $f(h)\in\NS(X)$ is fixed and the induced action on $A_{\NS(X)}$ is trivial. Therefore, the characteristic subspace is preserved, and by Proposition \ref{prop:lat description of sym auto}, we obtain an embedding $G\hookrightarrow\Aut^{s}(X)$, such that $(G,S_G(X))\cong (G,S)$.
\end{proof}

\subsection{Superspecial K3 Surfaces}
\label{subsec:superspecial}
In this subsection, we discuss the case of superspecial K3 surfaces, i.e., $\sigma=1$. Maximal finite subgroups of $\Aut^s(X)$ for superspecial K3 surfaces over an algebraically closed field of any characteristic (including $2$) are obtained recently by Ohashi--Sch\"utt \cite{OS24}. Together with \cite{DK09}, they complete the classification of finite tame symplectic actions on all K3 surfaces in positive characteristic. We define H\"ohn--Mason pair as follows:
\begin{definition}
A H\"ohn--Mason pair $(G,S)$ is a Leech pair with $G\le \Co_0$ and $S=S_G(\BL)$, such that $G$ is maximal in the sense that $G$ contains all automorphisms of $S$ acting trivially on $A_S$, i.e., $G$ has no nontrivial saturation.
\end{definition}
Such kind of pairs have been classified by H\"ohn--Mason \cite{HM16} and there are 290 of them. Now we relate finite groups of symplectic automorphisms of a superspecial K3 surface with H\"ohn--Mason pairs. Recall that for any Leech pair $(G,S)$, we let $\widetilde{G}$ be the group of all automorphisms of $S$ acting trivially on $A_S$. The saturation $(\widetilde{G}, S)$ is still a Leech pair.
\begin{proposition}
\label{proposition: superspecial to hohn mason}
Suppose $X$ is a superspecial K3 surface and $G\le \Aut^s(X)$ is finite. Then the pair $(G, S_G(X))$ is a Leech pair and it can be saturated to a H\"ohn--Mason pair $(\widetilde{G}, S_G(X))$.
\end{proposition}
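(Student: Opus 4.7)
The first claim—that $(G, S_G(X))$ is a Leech pair—is already Lemma \ref{lemma: leech pair from geo}, so I would just cite it. The substantive content is that the enlargement $(\widetilde{G}, S_G(X))$ lies in the Höhn--Mason list, and my plan is to reduce this to the embedding criterion in Proposition \ref{prop:emb leech}.

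First I would verify that the enlargement $(\widetilde{G}, S_G(X))$ is itself a Leech pair. The lattice $S_G(X)$ is rootless and negative definite by Lemma \ref{lemma: leech pair from geo}, and $\widetilde{G}$ acts trivially on $A_{S_G(X)}$ by its very definition. Since $\widetilde{G}\supset G$ we have $S_G(X)^{\widetilde{G}}\subset S_G(X)^G=\{0\}$, so the enlarged action has no nonzero fixed vectors.

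Next I would bound rank plus length. Applying Lemma \ref{lemma: sum} to the primitive embedding $S_G(X)\hookrightarrow \NS(X)\cong N_{p,1}$ and using $\rk(N_{p,1})=22$ together with $\ell(A_{N_{p,1}})=\ell((\BZ/p\BZ)^{2})=2$, I obtain $\rk(S_G(X))+\ell(A_{S_G(X)})\leq 24$. With this bound in hand, Proposition \ref{prop:emb leech} produces a subpair embedding $(\widetilde{G}, S_G(X))\hookrightarrow (\Co_0,\BL)$; in particular $\widetilde{G}\leq \Co_0$.

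Finally I would verify the two conditions in the Höhn--Mason definition. Maximality is immediate: by construction $\widetilde{G}$ already consists of \emph{all} automorphisms of $S_G(X)$ acting trivially on $A_{S_G(X)}$, so the pair admits no further nontrivial enlargement. The identification $S_G(X)=S_{\widetilde{G}}(\BL)$ is the point that needs a little care: unwinding the construction underlying Proposition \ref{prop:emb leech}, one primitively embeds $S_G(X)$ into $\BL$ and extends the $\widetilde{G}$-action to $\BL$ by the identity on the orthogonal complement (in the spirit of Lemma \ref{lemma: extending group action}). Then $\BL^{\widetilde{G}}$ contains the orthogonal complement of $S_G(X)$ in $\BL$, and since $\widetilde{G}$ has no nonzero fixed vectors on $S_G(X)$ we get the reverse inclusion as well, yielding $S_{\widetilde{G}}(\BL)=S_G(X)$. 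The main (and really only) thing to be careful about is this last identification, so in writing the proof I would make the reference to the construction of the embedding explicit enough that $S_G(X)$ is realized as the \emph{full} covariant lattice in $\BL$, not merely a sublattice of it.
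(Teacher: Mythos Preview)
Your proposal is correct and follows essentially the same route as the paper: cite Lemma \ref{lemma: leech pair from geo}, use Lemma \ref{lemma: sum} on the primitive embedding $S_G(X)\hookrightarrow N_{p,1}$ (with $\rk+\ell=24$) to get the bound, and invoke Proposition \ref{prop:emb leech}. The paper's proof is terser---it simply asserts ``Thus $(\widetilde{G}, S_G(X))$ is a H\"ohn--Mason pair'' after embedding $(G,S_G(X))$ in $(\Co_0,\BL)$---whereas you explicitly unpack why $S_{\widetilde{G}}(\BL)=S_G(X)$ and why $\widetilde{G}$ is maximal; this extra care is welcome and fills in what the paper leaves implicit.
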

\begin{proof}
By Lemma \ref{lemma: leech pair from geo}, the pair $(G, S_G(X))$ is a Leech pair. We have $\rank(\NS(X))+\ell(A_{\NS(X)})=24$. By Lemma \ref{lemma: sum} we have $\rank(S_G(X))+\ell(A_{S_G(X)})\le 24$. By Proposition \ref{prop:emb leech}, $(G, S_G(X))$ is a subpair of $(\Co_0, \BL)$. Thus $(\widetilde{G}, S_G(X))$ is a H\"ohn--Mason pair.  
\end{proof}

By Proposition \ref{proposition: criterion}, a Leech pair $(G,S)$ arises from a superspecial K3 surface in characteristic $p$ if and only if there exists a primitive embedding $S\hookrightarrow N_{p,1}$. 

\begin{proposition}
\label{proposition: superspecial list}
A H\"ohn--Mason pair $(G,S)$ admits primitive embedding $S\hookrightarrow N_{p,1}$ if and only if $(G,S,p)$ belongs to Table \ref{table: sublist of HM}.
\end{proposition}
The calculation method is similar as the proof of \cite[Theorem 4.14]{laza2022symplectic}, where Laza and the second author classified symplectic automorphism groups of smooth cubic fourfolds. 
\begin{proof}
Given a H\"ohn--Mason pair $(G,S)$, then $q_S$ is known. By Proposition \ref{prop:lat criterion}, it suffices to check whether there exists an embedding $S\oplus \BD_{p,\sigma}\hookrightarrow \II_{9,41}$ such that the image of $S$ is primitive. Suppose such an embedding exists.

Recall that $q_{\BD_{p,1}}=-q_{N_{p,1}}$ is equal to $p^{+2}$ if $p\equiv 3\;(\md\; 4)$; to $p^{-2}$ if $p\equiv 1\;(\md\; 4)$. Let $\widetilde{S}$ be the saturation of the image of $S\oplus \BD_{p,1}$ in $\II_{9,41}$. Denote by $\widetilde{S}^{\perp}$ the orthogonal complement of $\widetilde{S}$ in $\II_{9,41}$. Then $\sgn(\widetilde{S}^{\perp})=(1, 21-\rank(S))$ and $q_{\widetilde{S}^{\perp}}=-q_{\widetilde{S}}$.

Depending on $q_S$, there are at most three possibilities for $q_{\widetilde{S}}$, corresponding to $[\widetilde{S}: S\oplus \BD_{p,1}]=1,p^2, p^4$. For each possibility, we can use Nikulin's existence theorem \cite[Theorem 1.10.1]{Nik79} to check whether an even lattice of rank $(1, 21-\rank(S))$ and with discriminant form $-q_{\widetilde{S}}$ exists. If not, then the corresponding possibility is excluded. If so, suppose $K$ is an even lattice of rank $(1, 21-\rank(S))$ and $q_K=-q_{\widetilde{S}}$. Then there exists a saturation $\widetilde{S}\oplus K\hookrightarrow \II_{9,41}$. Since $S$ is primitive in $\widetilde{S}$, and $\widetilde{S}$ is primitive in $\II_{9,41}$, we obtain an embedding $S\oplus \BD_{p,1}\hookrightarrow \II_{9,41}$ with image of $S$ primitive.
\end{proof}

\begin{example}
Take $(G, q_S)\cong (L_3(4).2, 4_3^{-1} 3^{-1} 7^{-1})$ for an example (the corresponding number in the main list of H\"ohn--Mason is 170). In this case $p$ is supposed to be $3$ or $7$. Suppose $p=3$, then $\widetilde{S}$ is a saturation of $S\oplus \BD_{3,1}$ of index $3$. We have $q_{S\oplus \BD_{3,1}}=4_3^{-1} 3^{-1} 7^{-1}\oplus 3^{+2}=4_3^{-1} 3^{-3} 7^{-1}$, which implies that $q_{\widetilde{S}}=4_3^{-1} 3^{+1} 7^{-1}$, and $-q_{\widetilde{S}}=4_5^{-1} 3^{-1} 7^{+1}$. If $p=7$, we have $q_{S\oplus \BD_{7,1}}=4_3^{-1} 3^{-1} 7^{-3}$ and $-q_{\widetilde{S}}=4_5^{-1} 3^{+1} 7^{-1}$. Notice that the rank one lattice $\langle 84\rangle$ has discriminant form $4_5^{-1} 3^{+1} 7^{-1}$. Therefore, $(G, q_S)\cong (L_3(4).2, 4_3^{-1} 3^{-1} 7^{-1})$ is realized for $p=7$ and not possible for $p=3$. 
\end{example}

\subsection{Artin Invariants $\sigma\ge  2$}
In this subsection, let us take a glimpse at supersingular K3 surfaces with Artin invariants $\sigma\ge 2$. Let $X$ be such a supersingular K3. Then $\NS(X)\cong N_{p,\sigma}$. 

Suppose $G\le \Aut^s(X)$ is a finite subgroup. Then there is an induced action of $G$ on $N_{p,\sigma}$ with covariant lattice $S_G(N_{p,\sigma})$. Since the action of $G$ on $A_{N_{p,\sigma}}$ is trivial, there exists a saturation $N_{p,\sigma}\hookrightarrow N_{p, 1}$ of index $p^{2\sigma-2}$, such that the action of $G$ on $N_{p,\sigma}$ extends to an action on $N_{p,1}$. 

The covariant lattice $S_G(N_{p,1})$ is a saturation of $S_G(N_{p,\sigma})$. Both $S_G(N_{p,\sigma})$ and $S_G(N_{p,1})$ are negative definite. The lattice $S_G(N_{p,\sigma})$ has no roots, while $S_G(N_{p,1})$ may have. Denote by $R_G$ the root sublattice of $S_{G}(N_{p,1})$. Denote by $R_G^{\perp}$ the orthogonal complement of $R_G$ in $S_G(N_{p,1})$. Clearly, the action of $G$ on $S_G(N_{p,1})$ preserves $R_G$ and $R_G^{\perp}$. Thus we have a group homomorphism $G\to \Aut(R_G)$. Let $G_L$ and $G_R$ be the kernel and image of this homomorphism, i.e.,  we have an exact sequence of groups:
\begin{equation}\label{eq:decomposition}
1\longrightarrow G_L\longrightarrow G\longrightarrow G_R\longrightarrow 1.
\end{equation}
The lattice $R_G^{\perp}$ contains no roots. The covariant sublattice $S_{G_L}(N_{p,1})$ is a primitive sublattice of $R_G^{\perp}$. Thus $(G_L, S_{G_L}(N_{p,1}))$ is a Leech pair. Since $\rank(N_{p,1})+\ell(A_{N_{p,1}})= 24$ and  $S_{G_L}(N_{p,1})$ is a primitive sublattice of $N_{p,1}$, we have $\ell(A_{S_{G_L}(N_{p,1})})+\rank(S_{G_L}(N_{p,1}))\le 24$ by Lemma \ref{lemma: sum}. Then by Proposition \ref{prop:emb leech}, the Leech pair $(G_L, S_{G_L}(N_{p,1}))$ is a subpair of $(\Co_0, \BL)$. Moreover we have the following characterization of $(G_L, S_{G_L}(N_{p,1}))$. 

\begin{proposition}
\label{proposition: G_L}
The saturation $(\widetilde{G}_L, S_{G_L}(N_{p,1}))$ of $(G_L, S_{G_L}(N_{p,1}))$ is a H\"ohn--Mason pair belonging to Table \ref{table: sublist of HM}.
\end{proposition}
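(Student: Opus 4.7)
The plan is to verify the defining properties of a H\"ohn--Mason pair for the enlargement, and then apply Theorem \ref{theorem: superspecial list} to locate it in Table \ref{table: sublist of HM}. The discussion preceding the proposition already establishes that $(G_L, S_{G_L}(N_{p,1}))$ is a Leech pair: rootlessness follows from the primitive inclusion into $R_G^{\perp}$, triviality on the discriminant from Lemma \ref{lemma:trivial action}, and vanishing of the invariant part from the very definition of the covariant lattice. Applying Lemma \ref{lemma: sum} to the primitive embedding $S_{G_L}(N_{p,1}) \hookrightarrow N_{p,1}$, together with $\rank(N_{p,1}) + \ell(A_{N_{p,1}}) = 24$, yields $\rank(S_{G_L}(N_{p,1})) + \ell(A_{S_{G_L}(N_{p,1})}) \le 24$. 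Proposition \ref{prop:emb leech} then realizes the pair as a subpair of $(\Co_0, \BL)$, in particular fixing a primitive embedding $S_{G_L}(N_{p,1}) \hookrightarrow \BL$.

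The next step is to upgrade this to a statement about the enlargement $(\widetilde{G}_L, S_{G_L}(N_{p,1}))$. Using the primitive embedding into $\BL$ just produced, any isometry of $S_{G_L}(N_{p,1})$ trivial on its discriminant group glues with the identity on $S_{G_L}(N_{p,1})^{\perp} \subset \BL$ to an isometry of $\BL$ by the standard discriminant gluing formalism \cite{Nik79}, which places $\widetilde{G}_L \le \Co_0$. The enlarged pair remains a Leech pair: rootlessness and faithfulness of the action are automatic, triviality on the discriminant holds by construction of the enlargement, and any $\widetilde{G}_L$-fixed vector in $S_{G_L}(N_{p,1})$ is in particular $G_L$-fixed and hence zero. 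One also checks that $S_{\widetilde{G}_L}(\BL) = S_{G_L}(N_{p,1})$ by combining $\BL^{\widetilde{G}_L} \supseteq S_{G_L}(N_{p,1})^{\perp}$ with the vanishing of the $\widetilde{G}_L$-fixed part inside $S_{G_L}(N_{p,1})$. Maximality of $\widetilde{G}_L$ in the sense required by the H\"ohn--Mason definition is built into the construction. Thus $(\widetilde{G}_L, S_{G_L}(N_{p,1}))$ is a H\"ohn--Mason pair.

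To conclude, I would apply Theorem \ref{theorem: superspecial list}: the embedding $S_{G_L}(N_{p,1}) \hookrightarrow N_{p,1}$ is tautologically primitive, since $S_{G_L}(N_{p,1})$ is by definition the orthogonal complement in $N_{p,1}$ of its $G_L$-invariant part. Hence the enlarged pair appears in Table \ref{table: sublist of HM}. The most delicate point is the discriminant gluing that lifts $\widetilde{G}_L$ to $\Co_0$, but once the primitive embedding $S_{G_L}(N_{p,1}) \hookrightarrow \BL$ is in hand this is a standard consequence of Nikulin's formalism, so no new obstacle arises beyond the lattice-theoretic inputs already assembled in Section \ref{sec:lat}.
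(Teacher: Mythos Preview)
Your proposal is correct and follows essentially the same approach as the paper. The paper's own proof is a one-liner (``This is because $S_{G_L}(N_{p,1})$ is a primitive sublattice of $N_{p,1}$''), implicitly invoking the same logic already spelled out in Proposition \ref{proposition: superspecial to hohn mason} together with Theorem \ref{theorem: superspecial list}; you have simply made the gluing argument for $\widetilde{G}_L \le \Co_0$ and the verification of $S_{\widetilde{G}_L}(\BL) = S_{G_L}(N_{p,1})$ explicit.
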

\begin{proof}
This is because $S_{G_L}(N_{p,1})$ is a primitive sublattice of $N_{p,1}$.
\end{proof}

\begin{lemma}
\label{lemma: supersingular to root pair}
	The root lattice $R_G$ satisfies the following statements:
	\begin{enumerate}
		\item $(R_G)^{G_R}=\{0\}$,
		\item the sublattice generated by $pR_G$ and elements of the form $g\alpha-\alpha$ for $g\in G,\alpha\in R_G$ contains no roots.
	\end{enumerate}
\end{lemma}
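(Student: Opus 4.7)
Both statements reduce to unwinding the definitions of $G_R$ and $R_G$, combined with two features of the saturation $N_{p,\sigma}\hookrightarrow N_{p,1}$ and with Lemma \ref{lemma: leech pair from geo}, which tells us that $S_G(N_{p,\sigma})$ is rootless.

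For (1), I would argue directly. By construction $G_R$ is the image of $G$ in $\Aut(R_G)$, so any $G_R$-fixed vector in $R_G$ is automatically $G$-fixed. Hence
\[
(R_G)^{G_R}=(R_G)^G\subset S_G(N_{p,1})^G=\{0\}
\]
by the defining property of the covariant lattice.

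The real content is (2). The strategy is to show that every generator of the sublattice in question already lies in $S_G(N_{p,\sigma})$; Lemma \ref{lemma: leech pair from geo} then rules out roots. I would first record the identity $S_G(N_{p,1})\cap N_{p,\sigma}=S_G(N_{p,\sigma})$: the inclusion $\subset$ uses only $N_{p,\sigma}^G\subset N_{p,1}^G$, while $\supset$ (equivalently, $S_G(N_{p,\sigma})\subset S_G(N_{p,1})$) uses that $N_{p,1}^G$ is commensurable with $N_{p,\sigma}^G$, so pairings against $N_{p,1}^G$ can be reduced to pairings against $N_{p,\sigma}^G$ after clearing a denominator. With this identity at hand, two observations finish the job: (a) since $A_{N_{p,\sigma}}$ is $p$-elementary, $N_{p,1}/N_{p,\sigma}$ is killed by $p$, hence $pN_{p,1}\subset N_{p,\sigma}$ and therefore $p\alpha\in S_G(N_{p,1})\cap N_{p,\sigma}=S_G(N_{p,\sigma})$ for every $\alpha\in R_G$; (b) since $N_{p,1}\subset N_{p,\sigma}^\vee$ and $G$ acts trivially on $A_{N_{p,\sigma}}$ (this is the input from Jang driving Proposition \ref{prop:lat description of sym auto}), one gets $g\alpha-\alpha\in N_{p,\sigma}\cap S_G(N_{p,1})=S_G(N_{p,\sigma})$ for all $g\in G$ and $\alpha\in R_G$. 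Therefore the sublattice generated by $pR_G$ and the differences $g\alpha-\alpha$ sits inside $S_G(N_{p,\sigma})$, which is rootless.

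I do not expect any genuine obstacle: the lemma is a bookkeeping statement converting the geometric rootlessness of $S_G(N_{p,\sigma})$ into a combinatorial constraint on how $G$ acts on $R_G\subset S_G(N_{p,1})$. The only step needing a touch of care is the identity $S_G(N_{p,1})\cap N_{p,\sigma}=S_G(N_{p,\sigma})$, and even that is a short commensurability argument on fixed sublattices.
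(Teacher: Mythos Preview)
Your proof is correct and follows essentially the same route as the paper: both show the sublattice in (2) lies inside $S_G(N_{p,\sigma})$ and then invoke Lemma \ref{lemma: leech pair from geo}. The only cosmetic difference is that for the elements $g\alpha-\alpha$ the paper uses the trivial $G$-action on $A_{S_G(N_{p,\sigma})}$ (via Lemma \ref{lemma:trivial action}), while you work one level up with the trivial action on $A_{N_{p,\sigma}}$ and then intersect with $S_G(N_{p,1})$; your explicit identity $S_G(N_{p,1})\cap N_{p,\sigma}=S_G(N_{p,\sigma})$ is exactly what the paper is using implicitly.
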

\begin{proof}
	The first statement follows from that $R_G\subset S_{G}(N_{p,1})$.
	
	For the second statement, we show that the sublattice is contained in $S_G(N_{p,\sigma})$. Since $N_{p,\sigma}\supset pN_{p,1}$, we have $S_G(N_{p,\sigma})\supset pS_G(N_{p,1})$. Thus $R_G\cap S_{G}(N_{p,\sigma})\supset pR_G$. 
	
	Since $G$ acts trivially on $A_{S_{G}(N_{p,\sigma})}$, the equivalence class of $g\alpha-\alpha$ in $A_{S_G(N_{p,\sigma})}$ is trivial for all $g\in G$ and $\alpha\in S_G(N_{p,1})$. This implies that $g\alpha-\alpha\in S_G(N_{p,\sigma})$. We conclude that the lattice generated by $pR_G$ and elements of the form $g\alpha-\alpha$ for $g\in G,\alpha\in R_G$ is contained in $S_G(N_{p, \sigma})$. Since there is no root in $S_{G}(N_{p,\sigma})$, we are done.
\end{proof}
\begin{remark}
    The automorphisms of root sytems of type $A,D,E$ will come into play. Later in \S\ref{section: p-root pair}, we will systematically study root systems which admit a finite group action satisfying the conditions in Lemma \ref{lemma: supersingular to root pair}.
\end{remark}
\section{The Tame Case}
\label{section: tame}
In this section we give the proof of Proposition \ref{proposition: tame supersingular} and Proposition \ref{proposition: finite height and tame}. 

\subsection{Proof of Proposition \ref{proposition: tame supersingular}: Supersingular K3 Surfaces}
The next lemma will be useful.

\begin{lemma}
\label{lemma: gcd(G,p)=1}
If $\gcd(|G|, p)=1$, then $\ell_p(A_{S_G(X)})=0$, and $S_G(X)\cong S_G(N_{p,1})$.
\end{lemma}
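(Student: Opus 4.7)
The plan rests on the tameness assumption $\gcd(|G|,p)=1$, which makes $|G|$ invertible in $\BZ_p$, so the averaging idempotent $e_p := \frac{1}{|G|}\sum_{g\in G} g \in \BZ_p[G]$ is well-defined and yields an orthogonal $\BZ_p$-lattice decomposition
\[
N_{p,\sigma}\otimes\BZ_p \;=\; \bigl(N_{p,\sigma}^G\otimes\BZ_p\bigr)\oplus\bigl(S_G(X)\otimes\BZ_p\bigr).
\]
Combined with Proposition~\ref{prop:lat description of sym auto} (which says $G$ acts trivially on $A_{N_{p,\sigma}}$), both claims will follow.

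For the first claim $\ell_p(A_{S_G(X)})=0$, I will reduce the above splitting modulo $p$ to an orthogonal decomposition of $\BF_p$-quadratic spaces; then the radical of the form on $N_{p,\sigma}\otimes\BF_p$ equals the orthogonal sum of the radicals of the two summands. I identify that radical with $pN_{p,\sigma}^\vee/pN_{p,\sigma}\cong A_{N_{p,\sigma}}\cong(\BF_p)^{2\sigma}$ via a $G$-equivariant isomorphism, on which $G$ acts trivially. Tameness also forces $(N_{p,\sigma}\otimes\BF_p)^G = N_{p,\sigma}^G\otimes\BF_p$ (by averaging mod $p$), so the radical sits entirely inside the first summand. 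Consequently the form on $S_G(X)\otimes\BF_p$ is non-degenerate, i.e.\ $S_G(X)\otimes\BZ_p$ is $\BZ_p$-unimodular, which is exactly $\ell_p(A_{S_G(X)})=0$.

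For the second claim $S_G(X)\cong S_G(N_{p,1})$, I set $H := N_{p,1}/N_{p,\sigma}$, which sits inside $A_{N_{p,\sigma}}$ as an isotropic subgroup and is therefore pointwise $G$-fixed by Proposition~\ref{prop:lat description of sym auto}. Applying the central idempotent $1-e_p$ to the short exact sequence of $\BZ_p[G]$-modules
\[
0\longrightarrow N_{p,\sigma}\otimes\BZ_p\longrightarrow N_{p,1}\otimes\BZ_p\longrightarrow H\longrightarrow 0,
\]
and using that $(1-e_p)H=0$ together with semisimplicity of $\BZ_p[G]$, will give $S_G(X)\otimes\BZ_p = S_G(N_{p,1})\otimes\BZ_p$. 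Over $\BZ[1/p]$ the two lattices coincide trivially (as $N_{p,\sigma}$ and $N_{p,1}$ already do), so $S_G(X)=S_G(N_{p,1})$ as sublattices of $N_{p,1}$.

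The subtlest step is verifying that the radical of $N_{p,\sigma}\otimes\BF_p$ really lands in the fixed summand. This uses in an essential way both Proposition~\ref{prop:lat description of sym auto} (trivial $G$-action on $A_{N_{p,\sigma}}$) and tameness (so that taking $G$-invariants commutes with reduction mod $p$); without tameness neither the $\BZ_p$-orthogonal splitting nor the exactness of the $1-e_p$ projector would be available, and both conclusions can in fact fail.
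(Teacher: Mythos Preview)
Your argument is correct, but it takes a more structural route than the paper. The paper's proof is a two-line averaging computation: for $x\in S_G(X)^\vee$, the sum $\sum_{g\in G} gx$ is $G$-invariant in $S_G(X)_\BQ$, hence zero; meanwhile $\sum_{g\in G}(gx-x)\in S_G(X)$ because $G$ acts trivially on $A_{S_G(X)}$ (via Lemma~\ref{lemma:trivial action}); subtracting gives $|G|\,x\in S_G(X)$, so $|G|$ kills $A_{S_G(X)}$ and $\ell_p=0$. The second claim is then immediate: $S_G(X)\subset S_G(N_{p,1})$ has $p$-power index (since $N_{p,1}/N_{p,\sigma}$ is a $p$-group), while $|A_{S_G(X)}|$ is prime to $p$, forcing the index to be $1$.

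Your approach instead works $p$-adically: the Maschke idempotent $e_p\in\BZ_p[G]$ gives an orthogonal $\BZ_p$-splitting, and you push the radical of $N_{p,\sigma}\otimes\BF_p$ into the invariant summand using the $G$-equivariant identification with $A_{N_{p,\sigma}}$ and the fact that invariants commute with reduction mod $p$. For the second claim you apply the exact functor $(1-e_p)(-)$ to the extension $0\to N_{p,\sigma}\to N_{p,1}\to H\to 0$ and use $(1-e_p)H=0$. One small quibble: ``semisimplicity of $\BZ_p[G]$'' is not literally true (it is an order, not semisimple); what you are really using is that multiplication by a central idempotent is exact, which needs no semisimplicity. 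Otherwise the argument is sound. The paper's proof is more elementary and avoids $\BZ_p$ altogether; your version has the advantage of making the role of tameness transparent (it is exactly the existence of $e_p$) and of bypassing Lemma~\ref{lemma:trivial action}, since you work with $A_{N_{p,\sigma}}$ directly rather than with $A_{S_G(X)}$.
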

\begin{proof}
Take any $[x]\in A_{S_G(X)}$ with $x\in S_G(X)_{\BQ}$. The sum $\sum\limits_{g\in G} gx \in S_G(X)_{\BQ}$ is fixed by $G$, hence equals to zero. On the other hand, since $G$ induces a trivial action on $A_{S_G(X)}$, we must have $gx-x\in S_G(X)$ for any $g\in G$. Therefore, $\sum\limits_{g\in G} (gx-x) \in S_G(X)$, which implies that $|G| x\in S_G(X)$ where $\gcd(|G|, p)=1$. We conclude that $\ell_p(A_{S_G(X)})=0$. Since $S_G(X)\subset S_G(N_{p,1})$ is of index a $p$-power, and $\gcd(|A_{S_G(X)}|, p)=1$, we must have $S_G(X)\cong S_G(N_{p,1})$.
\end{proof}

By Lemma \ref{lemma: leech pair from geo}, the pair $(G, S_G(X))$ is a Leech pair. We have $\rank(N_{p,1})+\ell_q(A_{N_{p,1}})= 22$ if $q\ne p$. By Lemma \ref{lemma: sum} we have $\rank(S_G(N_{p,1}))+\ell_q(A_{S_G(N_{p,1})})\le 22$. It is clear that $\rank(S_G(N_{p,1}))+\ell_p(A_{S_G(N_{p,1})})\le 22$ since $\ell_p(A_{S_G(N_{p,1})})=0$. Therefore, $\rank(S_G(N_{p,1}))+\ell(A_{S_G(N_{p,1})})\le 22$. From Lemma \ref{lemma: gcd(G,p)=1} and Proposition \ref{prop:emb leech}, the Leech pair $(G, S_G(X))$ is isomorphic to a subpair of $(\Co_0, \BL)$. Since $S_G(X)$ admits a primitive embedding into $N_{p,1}$, the saturation of $(G, S_G(X))$ is a H\"ohn--Mason pair belonging to Table \ref{table: sublist of HM}.

We have a saturation $S_G(X)\oplus \NS(X)^G\subset \NS(X)$. By Lemma \ref{lemma: gcd(G,p)=1}, $\ell_p(A_{S_G(X)})=0$, hence $\ell_p(\NS(X)^G)=\ell_p(\NS(X))=2\sigma$. This implies that $\rank(\NS(X)^G)\ge 2\sigma$. We conclude that $\rank(S_G(X))\le 22-2\sigma$. Now the proof of Proposition \ref{proposition: tame supersingular} is complete.

\subsection{Proof of Proposition \ref{proposition: finite height and tame}: K3 Surfaces of Finite Height}
We mainly use Jang \cite{jang2017lifting} to turn this into a problem over characteristic 0. For more on K3 surfaces of finite height, see \cite{AM77} or \cite[\S 6]{Lie16} and references therein.  

The following lemma is essentially due to Jang (\cite[Theorem 3.2, Corollary 3.4]{jang2017lifting}):
\begin{lemma}
\label{lemma: finite height}
Suppose $G\le \Aut^s(X)$ is a finite subgroup such that $\gcd(|G|, p)=1$. Then we have the following.
\begin{enumerate}
\item The induced action of $G$ on $\NS(X)$ is faithful.
\item The induced action of $G$ on $A_{\NS(X)}$ is trivial. 
\item $\NS(X)$ admits a primitive embedding into the K3 lattice $U^3\oplus E_8^2$, in particular we have $\rank(\NS(X))+\ell(A_{\NS(X)})\le 22$.
\end{enumerate}
\end{lemma}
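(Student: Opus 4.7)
The plan is to reduce all three assertions to the corresponding characteristic-zero facts via Jang's projective lifting theorem for tame symplectic group actions on K3 surfaces of finite height \cite[Theorem 3.2, Corollary 3.4]{jang2017lifting}. First, I would apply Jang's result: since $X$ has finite height and $\gcd(|G|,p)=1$, the pair $(X,G)$ admits a projective formal lift that algebraizes, yielding a smooth projective K3 surface $\sX$ over a complete DVR $W$ of mixed characteristic $(0,p)$ with residue field $k$, together with an extension of the $G$-action to $\sX$. The essential feature of the tame case is that every line bundle on $X$ lifts, so the specialization map identifies $\NS(\sX_{\overline{\eta}})$ with $\NS(X)$ as a $G$-module. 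Fixing an embedding of $\overline{W[1/p]}$ into $\BC$, one views $\sX_{\BC}$ as a complex projective K3 surface carrying a symplectic action of $G$, whose Néron--Severi lattice is $G$-equivariantly isomorphic to $\NS(X)$.

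Next, I would invoke Nikulin's classical results \cite{Nik79} in characteristic zero: any finite group acting faithfully and symplectically on a complex K3 surface acts faithfully on its Néron--Severi lattice and trivially on the associated discriminant group. Transporting these statements through the $G$-equivariant isomorphism $\NS(\sX_{\BC}) \cong \NS(X)$ yields assertions (1) and (2). For assertion (3), the primitive embedding $\NS(\sX_{\BC}) \hookrightarrow H^2(\sX_{\BC}, \BZ) \cong U^3 \oplus E_8^2$ supplied by singular cohomology transports to a primitive embedding $\NS(X) \hookrightarrow U^3 \oplus E_8^2$. The inequality $\rank(\NS(X)) + \ell(A_{\NS(X)}) \le 22$ then follows immediately from Lemma \ref{lemma: sum} applied to this primitive embedding, since the K3 lattice is unimodular of rank $22$.

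The main obstacle is the invocation of Jang's lifting theorem itself, which is the deep input here: one must ensure that the lift carries the full $G$-action and simultaneously realizes the entire $\NS(X)$ as the Néron--Severi lattice of the characteristic-zero fiber (so that the $G$-module structure is transported without loss). Once this lifting is established, the remainder of the argument is a formal transport of classical characteristic-zero theorems of Nikulin together with the standard Hodge-theoretic embedding of the Néron--Severi lattice into the K3 lattice.
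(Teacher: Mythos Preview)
Your proposal follows essentially the same route as the paper: lift to characteristic zero via Jang's theorem, then transport Nikulin's classical results through the specialization isomorphism on N\'eron--Severi lattices. Two differences are worth flagging.

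First, the paper lifts only a single element $g\in G$ at a time rather than the whole group. This is enough because statements (1) and (2) are element-wise, and statement (3) needs no group action at all (any N\'eron--Severi-preserving lift of $X$ alone suffices). Your group-level lift is also legitimate, but it is not strictly necessary here and you should be sure Jang's statement actually delivers it in the form you claim.

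Second, and more substantively, you assert without justification that the lifted $G$-action on $\sX_{\BC}$ is \emph{symplectic}. This is not automatic from the lifting: a priori the eigenvalue of a lifted $g$ on $H^0(\sX,\omega_{\sX})$ is a root of unity in $W(k)^\times$ whose reduction modulo $p$ is $1$ (because $g$ is symplectic on the special fiber). The paper explicitly closes this gap by observing that since the order of $g$ is coprime to $p$, the only such root of unity is $1$ itself, hence $g$ acts trivially on the $2$-form over $W(k)$ and therefore over $\BC$. You should include this step; without it the invocation of Nikulin's results on symplectic actions is not yet justified.
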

\begin{proof}
For any $g\in G$, by the proof of \cite[Corollary 3.4]{jang2017lifting}, there exists a N\'eron--Severi group preserving lifting $\mathscr{X}/W(k)$ of $X$, i.e. $\NS(\mathscr{X}_{\overline{K}})\cong \NS(X)$ and the action of $g$ on $X$ can be lifted to an action on $\mathscr{X}/W(k)$ compatible with the specialization map $\NS(\mathscr{X}_{\overline{K}})\cong \NS(X)$.
	
By Lieblich--Maulik \cite[Theorem 2.1, Corollary 2.4]{LM18} (see also Jang \cite[Page 1790]{jang2017lifting}), we have an inclusion $\Aut(\mathscr{X}_{\overline{K}})\hookrightarrow\Aut(X)$. In particular, $\Aut(\mathscr{X})\hookrightarrow\Aut(X)$ where $\Aut(\mathscr{X})$ is the automorphism group of $\mathscr{X}$ as a $W(k)$-scheme.

Since $g$ is of finite order, then we know that its lifting, still denoted by $g$, is also of finite order (coprime with $p$). We denote the natural representation $\rho:\Aut(\mathscr{X})\rightarrow \GL(H^{0}(\mathscr{X},\omega_{\mathscr{X}}))=W(k)^{\times}$, then  $\rho(g) $ acts as a root of unit in $W(k)^{\times}$ since $g$ of finite order. But $g$ is symplectic on the special fiber $X$, hence the eigenvalue is 1 in the residue field $k$. Then it has to be 1 in $W(k)$ since the order of $g$ is coprime with $p$. Therefore, $\rho(g)=Id$. Thus $g$ is also symplectic when restricted to $\mathscr{X}_{\overline{K}}$.
	
Next we show that $g$ acts trivially on $A_{\NS(\mathscr{X}_{\overline{K}})}$. Since such a conclusion is true over $\mathbb{C}$, we can use the "Lefschetz principle" here to conclude over an algebraically closed field of characteristic 0. To be more precise, there exists a finitely generated field $E$ over $\mathbb{Q}$ contained in $\bar{K}$ and a K3 surface $X'$ defined over $E$ with $X'\times _{E}\overline{K}\cong \mathscr{X}_{\overline{K}}$ and $g$ acts on $X'$ symplectically and of finite order. Since $E$ is finitely generated over $\mathbb{Q}$, there is also a field embedding $E\hookrightarrow \mathbb{C}$. We denote by $X'_{\BC}=X'\times_{E}\mathbb{C}$. By \cite[\S 17, Lemma 2.2]{Huy16}, we have $\NS(\mathscr{X}_{\overline{K}})\cong \NS(X'_{\overline{E}})\cong \NS (X'_{\BC})$, and the action of $g$ on $X'_{\BC}$ is symplectic. We then know that the induced action of $g$ on $H^2(X'_{\BC}, \BZ)$ fixes all elements in the transcendental lattice $T(X'_{\BC})$. Therefore, the induced action of $g$ on $A_{\NS(X)}\cong A_{\NS(X'_{\BC})}\cong A_{T(X'_{\BC})}$ is trivial.

 Each $g\in G$ acts faithfully on $\NS(X'_\BC)$, hence also faithfully on $\NS(X)$. From the isomorphism $\NS(X)\cong \NS(X'_\BC)$ we also know that $\NS(X)$ admits a primitive embedding into the K3 lattice.
\end{proof}

Applying Lemma \ref{lemma: sum} for the primitive sublattice $S_G(X)\subset \NS(X)$, we have $$\rank(S_G(X))+\ell(A_{S_G(X)})\le 22.$$ By Lemma \ref{lemma:trivial action}, we know that the induced action of $G$ on $A_{S_G(X)}$ is trivial. Then it follows from Proposition \ref{prop:emb leech} that $(G, S_G(X))$ is a subpair of $(\Co_0, \BL)$. It is clear from Lemma \ref{lemma: finite height} that $S_G(X)$ also admits a primitive embedding into the K3 lattice.

Notice that $\rank(\NS(X))\le 22-2h$ (see \cite[Page 130]{AM77}). Since $G$ is finite, we have $\rk (S_G(X))\le 21-2h$. 

Finally, since $\gcd(|G|,p)=1$, we know $\gcd(|A_{S_G(X)}|,p)=1$, similar as Lemma \ref{lemma: gcd(G,p)=1}. We hence finish the proof of Proposition \ref{proposition: finite height and tame} for K3 surfaces of finite height.

\section{$p$-Root Pairs}
\label{section: p-root pair}

We start this section with the following definition motivated by Lemma \ref{lemma: supersingular to root pair}.
\begin{definition}
Let $H$ be a finite group acting faithfully on a root lattice $R$. We call $(H,R)$ a \emph{pseudo $p$-root pair} if the lattice $R_H^\#$ generated by $pR$ and  $\{g\alpha-\alpha\big{|} g\in H,\alpha\in R\}$ contains no roots. We call $(H,R)$ a \emph{$p$-root pair} if it is a pseudo $p$-root pair, and in addition $R^H=\{0\}$.

We say a (pseudo) $p$-root pair $(H,R)$ is irreducible if $R$ is an irreducible root lattice (namely, the associated Dynkin diagram is connected). We will simply write $R^{\#}_H$ as $R^{\#}$ if no ambiguity caused.
\end{definition}
Suppose $R=\oplus R_i$, then we define $H_i$ to be the image of $\{g\in H\big{|} g(R_i)=R_i\}$ in $\Aut(R_i)$, which is called the stabilizer of $R_i$.
\begin{lemma}
\label{lemma: component inherit the structure}
If $(H,R)$ is a pseudo $p$-root pair, then each $(H_i, R_i)$ is also a pseudo $p$-root pair. If $(H,R)$ is a $p$-root pair, then each $(H_i, R_i)$ is also a $p$-root pair.
    \end{lemma}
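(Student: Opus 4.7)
The plan is to reduce both claims to the construction of an explicit lattice containment and, for the second claim, a standard averaging (``transfer'') argument, treating the two parts of the statement in turn.

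For the pseudo $p$-root pair claim, I would write $H^{(i)} := \{g \in H : g(R_i) = R_i\}$, so by definition $H_i$ is the image of $H^{(i)} \to \Aut(R_i)$, in particular acts faithfully on $R_i$. The key step is to prove the containment $(R_i)^{\#}_{H_i} \subset R^{\#}_H$ by inspecting generators: $p R_i \subset p R \subset R^{\#}_H$, and for $g \in H_i$ lifted to $\tilde g \in H^{(i)}$ and $\alpha \in R_i$, one has $g\alpha - \alpha = \tilde g \alpha - \alpha \in R^{\#}_H$, where I also note this element lies in $R_i$ because $\tilde g$ stabilizes $R_i$. Since $R = \bigoplus R_k$ is an orthogonal decomposition, any root of $R_i$ is a root of $R$; so a root of $(R_i)^{\#}_{H_i}$ would be a root of $R^{\#}_H$, contradicting the hypothesis that the latter is rootless.

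For the $p$-root pair claim, the additional point to verify is $R_i^{H_i} = \{0\}$. Here I would employ a transfer: given $v \in R_i^{H_i}$, choose coset representatives $g_1, \dots, g_m$ of $H/H^{(i)}$ and set
\[
w := \sum_{j=1}^{m} g_j v.
\]
Because $H^{(i)}$ fixes $v$, the element $w$ does not depend on the choice of representatives. A short verification shows $w \in R^H$: for any $h \in H$, the collection $\{h g_j H^{(i)}\}$ is a permutation of $\{g_j H^{(i)}\}$, so $h g_j = g_{\sigma(j)} k_j$ with $k_j \in H^{(i)}$, and $h w = \sum_j g_{\sigma(j)} k_j v = \sum_j g_{\sigma(j)} v = w$. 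The $p$-root pair hypothesis on $(H,R)$ then forces $w = 0$.

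The decisive observation, which I regard as the main (though still mild) obstacle, is that $w = 0$ implies $v = 0$. For this I would note that distinct cosets $g_j H^{(i)}$ send $R_i$ to distinct irreducible summands $g_j R_i$ (if $g_1 R_i = g_2 R_i$ then $g_2^{-1} g_1 \in H^{(i)}$). Hence in the orthogonal decomposition of $R$ the projection of $w$ onto $g_j R_i$ is exactly $g_j v$, and each of these is nonzero when $v$ is. Therefore $w = 0$ forces $v = 0$, giving $R_i^{H_i} = \{0\}$ and completing the argument. The only care needed throughout is the bookkeeping between cosets of $H^{(i)}$ and the summands of the orthogonal decomposition of $R$.
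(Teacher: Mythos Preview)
Your argument is correct and essentially the same as the paper's. For the first statement the paper simply says ``clear'', and your containment $(R_i)^{\#}_{H_i}\subset R^{\#}_H$ is exactly what is implicit there. For the second statement the paper forms $\sum_{g\in H} g v$ rather than your transfer $w=\sum_j g_j v$, but these differ only by the scalar factor $|H^{(i)}|$, and both finish by projecting onto the $R_i$-component to conclude $v=0$.
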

    \begin{proof}
The first statement is clear. Suppose $(H,R)$ is a $p$-root pair, to show that $(H_i, R_i)$ is a $p$-root pair, we only need to show $R_i^{H_i}=\{0\}$. Otherwise there is a nonzero vector $v\in R_i^{H_i}$. The factor of $\sum\limits_{g\in H} gv$ in $R_i$ equals to $|\{g\in H\big{|} g(R_i)=R_i\}| v$, hence nonzero. Meanwhile $\sum\limits_{g\in H} gv\in R^H=\{0\}$, a contradiction.
\end{proof}
In the following, we will first discuss irreducible $p$-root pairs and then general $p$-root pairs. In particular, in the last subsection, we prove the last statement in Theorem \ref{theorem: main3}. Actually we here show that it holds more generally for pseudo $p$-root pairs .  
\subsection{$p$-Root Pairs of Type $A$}
We first study (pseudo) $p$-root pairs $(H,R)$ of type $A$. This means that $R$ is a root lattice of type $A_m$ for certain positive integer $m$. Then we can realize $R$ as the subgroup of the standard Euclidean space $\BR^{m+1}$ generated by $v_i-v_j$. Here $(v_1, \cdots, v_{m+1})$ is the standard basis of $\BR^{m+1}$. The Weyl group $W(R)$ of $R=A_m$ can be identified with the permutation group $S_{m+1}$ of vectors $v_1, \cdots, v_{m+1}$. The roots $\alpha_i=v_i-v_{i+1}$ for $1\le i\le m$ form a base of the root lattice.
\begin{lemma}\label{lem: pseudo A}
Let $(H,R)$ be a pseudo $p$-root pair, with $R=A_{m}$. Then $H\cap W(A_m)$ is a $p$-group, and for any nontrivial $g\in H\cap W(A_m)$, it has no fixed element in $\{v_1,\cdots, v_{m+1}\}$. In particular, it implies that if $H\cap W(A_m)$ is nontrivial, then $p|(m+1)$.   	
    \end{lemma}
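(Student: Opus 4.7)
The plan is to exploit the identification $W(A_m)\cong S_{m+1}$ acting by permutation on the orthogonal basis $v_1,\ldots,v_{m+1}$, and to extract roots inside $R^{\#}$ from the cycle structure of elements of $H\cap W(A_m)$. The key computational input will be the identity $g(v_i-v_j)-(v_i-v_j)=(gv_i-v_i)-(gv_j-v_j)$, which places such expressions in $R^{\#}$, together with the fact that $p\alpha\in R^{\#}$ for every root $\alpha$.

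First I would establish the fixed-point claim. Let $g\in H\cap W(A_m)$ be nontrivial and suppose, for contradiction, that $g$ fixes some $v_i$. Since $g$ is nontrivial, there exist distinct indices $j,k$ with $g(v_j)=v_k$. A one-line computation gives $g(v_i-v_j)-(v_i-v_j)=v_j-v_k$, which is a root lying in $R^{\#}$, contradicting rootlessness of $R^{\#}$.

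Next I would show that $H\cap W(A_m)$ is a $p$-group by ruling out any element of prime order $q\neq p$. Suppose $g\in H\cap W(A_m)$ has prime order $q$ coprime to $p$. By the previous step, $g$ fixes no $v_i$, so every $g$-orbit on $\{v_1,\ldots,v_{m+1}\}$ has size exactly $q$. Pick one orbit, say $\{v_{i_0},v_{i_1},\ldots,v_{i_{q-1}}\}$ with $g(v_{i_s})=v_{i_{s+1}}$ (indices mod $q$), and set $\alpha:=v_{i_0}-v_{i_1}$. By definition $g^s\alpha-g^{s-1}\alpha\in R^{\#}$ for every $s$, and telescoping yields $g^s\alpha-\alpha\in R^{\#}$ for all $s$. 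Summing over $s=0,\ldots,q-1$ and using the cyclic identity $\sum_{s=0}^{q-1}g^s\alpha=0$ (both positive and negative contributions run through the whole orbit), I obtain $-q\alpha\in R^{\#}$, i.e.\ $q\alpha\in R^{\#}$. Combined with $p\alpha\in R^{\#}$ and $\gcd(p,q)=1$, a Bezout combination forces $\alpha\in R^{\#}$, contradicting rootlessness.

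The final clause $p\mid (m+1)$ then drops out: if $H\cap W(A_m)$ is nontrivial, pick any nontrivial element, raise it to an appropriate power to obtain one of exact order $p$, and note that the fixed-point claim forces all its orbits on $\{v_1,\ldots,v_{m+1}\}$ to have size exactly $p$. I do not anticipate any genuine obstacle; the only mildly delicate point is ensuring that the telescoping identity $\sum_{s=0}^{q-1}g^s\alpha=0$ applies, which is why $\alpha$ must be built from two basis vectors lying in a common $g$-orbit, and this is exactly guaranteed by the fixed-point step done beforehand.
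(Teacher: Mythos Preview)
Your proposal is correct and follows essentially the same approach as the paper: the same telescoping identity $\sum_{s}(g^{s}\alpha-\alpha)=-q\alpha$ in $R^{\#}$ combined with $p\alpha\in R^{\#}$ and $\gcd(p,q)=1$ for the $p$-group claim, and the same observation $g(v_i-v_j)-(v_i-v_j)=v_j-gv_j$ for the fixed-point claim. The only difference is that you prove the two claims in the opposite order; note, however, that your use of the fixed-point step inside the $p$-group argument is unnecessary, since an element of prime order $q$ in $S_{m+1}$ automatically has at least one cycle of length $q$, and that is all you need to choose $\alpha$.
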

    \begin{proof}
We first prove that $H\cap W(A_m)$ is a $p$-group. Suppose there is an element $g\in H\cap W(A_m)$ of order $q$ where $q$ is a prime and $q\neq p$. Without loss of generality, we assume that $g$ has a cycle $(v_{1},v_{2},\cdots,v_{q})$. Then 
\begin{equation*}
\sum _{i=1}^{q}(g^{i}(\alpha_{1})-\alpha_{1})=-q\alpha_{1}\in R^\#.
\end{equation*}
Since $p\alpha_{1}\in R^\#, p\neq q$, we deduce $\alpha_{1}\in R^\#$, a contradiction. So $H\cap W(A_m)$ is a $p$-group. Therefore, $\abs{H}=p^{n}$ or $\abs{H}=2\cdot p^{n}$ for certain integer $n\ge 0$.
    	
Next, we prove that any nontrivial $g\in H\cap W(A_m)$ has no fixed element in $\{v_1,\cdots, v_{m+1}\}$. Otherwise we may choose an invariant element which we may denote as $v_{m+1}$. Since $g$ is nontrivial, there exists $v_i$, such that $g(v_i-v_{m+1})-(v_i-v_{m+1})$ is a root, a contradiction. The last statement hence follows.
    \end{proof}
\begin{corollary}
    Let $(H,R)$ be a pseudo $p$-root pair, with $R=A_{m}$. For any $g\in H\cap W(A_m)$, if $g$ is nontrivial, then all the orbits of $g$ have same length.
\end{corollary}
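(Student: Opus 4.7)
The plan is to apply Lemma~\ref{lem: pseudo A} not only to $g$ itself but to a suitable $p$-power of $g$, combining both conclusions of that lemma at once. From the lemma, $H\cap W(A_m)$ is a $p$-group, so $g$ has order $p^k$ for some $k\geq 1$, and therefore every orbit of $g$ on the set $\{v_1,\dots,v_{m+1}\}$ has length a power of $p$. The no-fixed-point statement, applied directly to $g$, forces each such orbit to have length at least $p$.

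Suppose for contradiction that the orbit lengths are not all equal. Let $p^{a}$ denote the smallest orbit length (so $a\geq 1$), and pick some orbit $O$ of strictly larger length $p^{b}$ with $b>a$. The key step is to consider the element $h := g^{p^{a}}\in H\cap W(A_m)$. By construction, $h$ fixes pointwise every orbit of $g$ of length $p^a$, so $h$ has at least $p^{a}\geq p$ fixed vectors among $\{v_1,\dots,v_{m+1}\}$. On $O$, however, $h$ acts as a cyclic permutation of order $p^{b-a}\geq p$, so $h$ is nontrivial in $H\cap W(A_m)$. Applying Lemma~\ref{lem: pseudo A} to $h$ then yields the required contradiction with the no-fixed-point statement.

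The argument is essentially a one-step reduction, so there is no substantial obstacle. The only point requiring care is checking that $g^{p^{a}}$ is genuinely nontrivial, which is ensured by our choice of $a$ as the minimal — rather than maximal — exponent among the orbit lengths, guaranteeing the existence of an orbit $O$ on which $g^{p^a}$ still acts faithfully.
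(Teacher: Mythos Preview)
Your proof is correct and follows essentially the same approach as the paper: both apply the no-fixed-point conclusion of Lemma~\ref{lem: pseudo A} to suitable powers of $g$ to derive a contradiction. The paper's version is terser (``apply this result to all the powers of $g$''), and in fact does not even need the $p$-group structure you invoke: for any orbit of length $\ell$, the element $g^{\ell}$ fixes that orbit pointwise, so by the lemma $g^{\ell}$ must be trivial, forcing every orbit length to equal $\mathrm{ord}(g)$. Your detour through the $p$-power structure of orbit lengths is harmless but unnecessary.
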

\begin{proof}
    In the proof of the above lemma, we have shown that for any element $g\in H\cap W(A_m)$, $g$ has no fixed element in $\{v_1,\cdots, v_{m+1}\}$. Apply this result to all the powers of $g$, then we can conclude.
\end{proof}
\begin{remark}
    This is a quite restrictive condition on $H$. However, $H$ needs not to be cyclic. For example, \[
    H=\{id, (12)(34), (13)(24), (14)(23)\}\subset S_4.
    \]
    Cycles of any nontrivial element have the same length, but $H$ is not cyclic.
\end{remark}
For irreducible $p$-root pairs of type A, we have a stronger conclusion:
    \begin{proposition}\label{prop: type A}
Given a $p$-root pair $(H,R)$ with $R=A_{m}$, then $(m+1)\big{|} |H\cap W(A_m)|$. In particular, $m+1$ is a power of $p$.
\end{proposition}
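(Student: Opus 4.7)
The plan is to prove that $P := H \cap W(A_m)$ acts transitively on $\{v_1,\ldots,v_{m+1}\}$; since Lemma~\ref{lem: pseudo A} already shows that every nontrivial element of $P$ has no fixed point, the $P$-action is automatically free and all orbits have size $|P|$, so transitivity yields $m+1 = |P|$ and hence $(m+1)\mid |P|$. Writing $k = (m+1)/|P|$ for the number of orbits, the task reduces to ruling out $k \geq 2$.

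Suppose for contradiction that $k \geq 2$. Then $R^P$ is spanned by $\{e_{O_i} - e_{O_j}\}$ (with $e_O := \sum_{v\in O} v$) and has rank $k-1 \geq 1$, so $R^H = 0$ forces $H \supsetneq P$. Since $\Aut(A_m)/W(A_m) \cong \BZ/2$, we have $|H/P| = 2$; because $|P|$ is an odd $p$-power and hence coprime to $2$, Schur--Zassenhaus splits the extension $1 \to P \to H \to \BZ/2 \to 1$, so we may choose $\tau \in H \setminus P$ of order $2$. The induced involution of $\tau$ on $R^P$ has no nonzero fixed vector (by $R^H = 0$), hence acts as $-1$.

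The decisive step is that $\tau$ preserves every $P$-orbit $O_j$ as a set. Writing $\tau = g\tau_0$ with $\tau_0$ the diagram automorphism $v_i \mapsto -v_{m+2-i}$, we have $\tau(v) = -\bar\tau(v)$ for a permutation $\bar\tau$ of $\{v_1,\ldots,v_{m+1}\}$. If $\tau$ induced a transposition $(O_i,O_j)$ on the set of orbits, then $\tau(e_{O_i}) = -e_{O_j}$ and $\tau(e_{O_j}) = -e_{O_i}$ would give $\tau(e_{O_i} - e_{O_j}) = +(e_{O_i} - e_{O_j})$, contradicting $\tau = -1$ on $R^P$. Hence every $P$-orbit is $\tau$-invariant, and $\bar\tau|_{O_j}$ is an involution (using $\tau^2 = 1$). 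As $|O_j| = |P|$ is odd, each such involution has at least one fixed point by parity.

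To conclude, pick $\bar\tau$-fixed points $v_i \in O_1$ and $v_j \in O_2$ in distinct orbits (possible since $k\geq 2$); then $\tau v_i = -v_i$ and $\tau v_j = -v_j$, so
\[
(\tau - 1)(v_i - v_j) = -2(v_i - v_j) \in R^\#.
\]
Combined with $p(v_i - v_j) \in pR \subseteq R^\#$ and $\gcd(2,p) = 1$, B\'ezout gives $v_i - v_j \in R^\#$; but $v_i - v_j$ is a root of $A_m$, contradicting the pseudo $p$-root pair hypothesis. The main obstacle is the orbit-preservation step, which rests on the interaction between the $-1$-eigenvalue condition on $R^P$ and the intrinsic negation introduced by $\tau_0$; once that is in hand, the production of the forbidden root is immediate, and the ``in particular'' assertion that $m+1$ is a power of $p$ follows since $|P|$ is a $p$-group.
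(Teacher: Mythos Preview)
Your proof is correct and follows essentially the same line as the paper's: both reduce to showing that $H\cap W(A_m)$ acts transitively on $\{v_1,\dots,v_{m+1}\}$, handle the case $H\not\subset W(A_m)$ by producing an order-$2$ element $\tau=-w$ outside $W(A_m)$, establish that $w$ (your $\bar\tau$) preserves each $P$-orbit, and then exploit the odd orbit size to find $w$-fixed points $v_{i'},v_{j'}$ in distinct orbits yielding $-2(v_{i'}-v_{j'})\in R^\#$. The only cosmetic differences are that you invoke Schur--Zassenhaus where Cauchy's theorem already suffices (since $|H|$ is even and $|P|$ is an odd $p$-group), and you phrase the orbit-preservation step via ``$\tau=-1$ on $R^P$'' while the paper derives it from the explicit identity $\sum_{h\in H'}hv_i-\sum_{h\in H'}whv_i-\sum_{h\in H'}hv_j+\sum_{h\in H'}whv_j=0$.
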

\begin{proof}
If $H\le W(A_m)$, we claim that $H$ acts transitively on $\{v_1, \cdots, v_{m+1}\}$. Otherwise, there exist two orbits $U,V\subset \{v_1, \cdots, v_{m+1}\}$ for the action of $H$. Take $v_i\in U, v_j\in V$. Consider the root $v_i-v_j$. Since $R^H=\{0\}$, we have $\sum\limits_{g\in H} g(v_i-v_j)=0$. Then 
\begin{equation*}
\sum\limits_{g\in H} gv_i=\sum\limits_{g\in H} gv_j,
\end{equation*}
But this is impossible since the left side is a positive combination of elements in $U$, while the left is a positive combination of elements in $V$. Therefore, $G$ acts transitively on $\{1,2,\cdots,m+1\}$ and hence $(m+1)\big{|} |H\cap W(A_m)|$.

If $H\not\subset W(A_m)$. Let $H'=H\cap W(A_{m})$. Then $|H|=2|H'|$ is even. So there exists an element of order $2$ in $H$, which is not in $W(A_m)$. So we can choose an involution $w\in W(A_m)$ such that $-w\in H$. Then $H=H'\cup (-wH')$. 

Since $R^{H}=\{0\}$, then for $x\in R$, we have $\sum\limits_{g\in H}g\cdot x=0$. We claim that $H'$ acts transitively on $\{1,\cdots,m+1\}$. Otherwise, we consider two orbits of $H'$, say $U,V$ and take $v_i\in U, v_j\in V$. Let $x=v_{i}-v_{j}$, then we have:	
	\begin{align*}
		\sum_{h\in H'} h\cdot v_{i}-\sum_{h\in H'}wh\cdot v_{i}-\sum_{h\in H'}h\cdot v_{j}+\sum_{h\in H'}wh\cdot v_{j}= 0
	\end{align*}
    Since $H'$ is normal in $H$, it is straightforward to see that $w$ sends an $H'$-orbit to an $H'$-orbit. By the equality above, we know that $w(U)=U, w(V)= V$. But $H'$ is a $p$-group, thus both $\abs{U}, \abs{V}$ are odd, and since $w$ is of order 2, there exists $v_{i'}\in U, v_{j'}\in V$ such that $w(v_{i'})=v_{i'}, w(v_{j'})=v_{j'}$. Then 
    $$-w(v_{i'}-v_{j'})-(v_{i'}-v_{j'})=-2(v_{i'}-v_{j'})\in R^\#.$$ Since $p(v_{i'}-v_{j'})\in R^\#$, we have $v_{i'}-v_{j'}\in R^\#$, a contradiction. Thus we know $H'$ acts transitively on $\{1,2,\cdots,m+1\}$, which implies $(m+1)\mid \abs{H\cap W(A_{m})}$. 
    \end{proof}
\begin{proposition}
    If $(H,R)$ is a $p$-root pair with $R=A_m$, then $(H\cap W(A_m), R)$ is also a $p$-root pair.
\end{proposition}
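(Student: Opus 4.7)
The plan is to verify directly the two defining conditions for a $p$-root pair in the restricted pair $(H', R)$, where I write $H' := H \cap W(A_m)$ for brevity.

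First, the pseudo $p$-root pair property is automatic: since $H' \le H$, the set $\{g\alpha - \alpha : g\in H', \alpha\in R\}$ sits inside the corresponding set for $H$, so $R^{\#}_{H'} \subseteq R^{\#}_H$. The latter is rootless by hypothesis, hence so is the former. This reduces the proof to showing $R^{H'} = \{0\}$.

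For the invariant sublattice, I would split into cases according to whether $H \subseteq W(A_m)$. If $H \subseteq W(A_m)$, then $H' = H$ and the conclusion is just the hypothesis $R^{H} = \{0\}$. If instead $H \not\subseteq W(A_m)$, then I would invoke the transitivity fact already extracted inside the proof of Proposition \ref{prop: type A}: in that situation the subgroup $H' = H \cap W(A_m)$ still acts transitively on the standard set $\{v_1, \cdots, v_{m+1}\}$. Given this transitivity, any $H'$-invariant vector in the ambient lattice $\bigoplus_i \BZ v_i$ must be a scalar multiple of $v_1 + \cdots + v_{m+1}$; but $R = A_m$ is by construction the zero-sum sublattice, so this line meets $R$ only at the origin, giving $R^{H'} = \{0\}$.

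The only nontrivial input is the transitivity of $H'$ on the basis when $H \not\subseteq W(A_m)$, and this has already been established inside the proof of Proposition \ref{prop: type A} as the step that yields $(m+1) \mid |H \cap W(A_m)|$. Consequently there is no genuine obstacle; the current proposition is essentially a repackaging of information that already falls out of the preceding argument, and the proof amounts to citing that transitivity plus the trivial observation about inheritance of the pseudo property.
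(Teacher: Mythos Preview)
Your proof is correct and follows essentially the same approach as the paper: both reduce to showing $R^{H'}=\{0\}$, invoke the transitivity of $H'=H\cap W(A_m)$ on $\{v_1,\dots,v_{m+1}\}$ established in the proof of Proposition~\ref{prop: type A}, and conclude that any $H'$-invariant element has equal coordinates and hence vanishes in the zero-sum lattice $A_m$. The paper does not bother splitting into the case $H\subseteq W(A_m)$ (since transitivity was shown in both cases of Proposition~\ref{prop: type A}), but your case distinction is harmless.
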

\begin{proof}
   We put $H'=H\cap W(A_m)$ and we only need to show that $R^{H'}=\{0\}$. From the proof of Proposition \ref{prop: type A}, we know that $H'$ acts transitively on $\{1,2,\ldots, m+1\}$. For $x\in R^{H'}$, we write $x=\sum\limits_{i=1}^{m+1} x_iv_i$. By the transitivity of $H'$-action, and $x\in R^{H'}$, we know that $x_i$ are the same for $1\le i\le m+1$, hence they are all 0. 
\end{proof}
\begin{remark}
    This may not hold for other root systems. See Lemma \ref{lem:p=3,D4}.
\end{remark}
\subsection{$p$-Root Pairs of Type $D_m$ with $m\ge 5$}
In this subsection we study (pseudo) $p$-root pairs of type $D$. Let $v_1, \cdots, v_m$ be the standard basis for the standard Euclidean space $\BR^m$. Recall that the root lattice $D_m$ ($m\ge 4$) can be realized as the subgroup of $\BR^m$ generated by $\pm v_i\pm v_j$, $1\le i<j\le m$. We choose the set of simple roots as $\{\alpha_1,\cdots,\alpha_m\}$ with $\alpha_i=v_i-v_{i+1}, 1\le i\le m-1, \alpha_m=v_{m-1}+v_m$ .

For $g\in S_m$ acting on $\{v_1, \cdots, v_m\}$, and $\epsilon_1, \cdots, \epsilon_m\in\{\pm 1\}$, there is an element in $\GL(\BR, m)$ given by $v_i\mapsto \epsilon_i g(v_i)$. These elements form a finite subgroup $S_m\ltimes (\BZ/2\BZ)^m$ of $\Aut(D_m)$. When $m\ge 5$, we have an equality $\Aut(D_m)=S_m\ltimes (\BZ/2\BZ)^m$. When $m=4$, we have $[\Aut(D_4):S_4\ltimes (\BZ/2\BZ)^4]=3$.

The Weyl group $W(D_m)$ of $D_m$ is isomorphic to $S_m \ltimes (\BZ/2\BZ)^{m-1}$. For any element $g\in S_m\ltimes (\BZ/2\BZ)^m$, we denote by $\overline{g}$ its part in $S_m$.

\begin{lemma}
\label{lemma: type D}
Suppose $(H, R)$ is a pseudo $p$-root pair of type $D_m$ for $m\ge 4$. Then for any $g\in H\cap (S_m\ltimes (\BZ/2\BZ)^m)$, we have $\overline{g}=id$. And there exists at most one $i\in\{1,\cdots,m\}$ such that $g(v_i)=-v_i$.
\end{lemma}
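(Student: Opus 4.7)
The plan is to argue both assertions by contradiction, in each case producing an explicit root that must lie in $R^{\#}$, violating the pseudo $p$-root hypothesis. I will start with the easier second statement: assume $\overline{g}=\mathrm{id}$ and that there exist distinct indices $i\neq j$ with $g(v_i)=-v_i$ and $g(v_j)=-v_j$. The vector $\alpha=v_i-v_j$ is a root of $D_m$, and a direct computation gives $g\alpha-\alpha=-2\alpha\in R^{\#}$. Since also $p\alpha\in R^{\#}$ and $\gcd(2,p)=1$ for odd $p$, a Bezout combination forces $\alpha$ itself into $R^{\#}$, contradicting rootlessness of $R^{\#}$.

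For the assertion $\overline{g}=\mathrm{id}$, I will suppose that $\sigma:=\overline{g}$ has some cycle $(i_1,\ldots,i_k)$ of length $k\ge 2$, and set $\epsilon:=\prod_{j=1}^{k}\epsilon_{i_j}$. If $\epsilon=-1$, iterating $g$ around the cycle yields $g^k(v_{i_s})=-v_{i_s}$ for every $s$, hence $g^k\alpha-\alpha=-2\alpha$ for $\alpha=v_{i_1}-v_{i_2}$, and the Bezout argument above again produces a root in $R^{\#}$. If $\epsilon=+1$, I first perform a gauge change of basis by replacing some $v_{i_j}$ with $-v_{i_j}$; this is a lattice automorphism of $D_m$, so preserves both the set of roots and the lattice $R^{\#}$, and can be arranged so that $g$ acts on $\{v_{i_1},\ldots,v_{i_k}\}$ as a pure cyclic shift $v_{i_j}\mapsto v_{i_{j+1}}$.

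Once all cycle signs equal $+1$, the case $k=2$ reduces to the Bezout argument because $g(v_{i_1}-v_{i_2})=-(v_{i_1}-v_{i_2})$. The decisive case $k\ge 3$ exploits a root that exists in type $D$ but not in type $A$: the vector $\alpha=v_{i_{k-1}}+v_{i_k}\in R$ satisfies $g\alpha=v_{i_k}+v_{i_1}$, so $g\alpha-\alpha=v_{i_1}-v_{i_{k-1}}$. Since the cycle entries are distinct and $k-1\ge 2$, this difference is itself a root of $D_m$ and lies in $R^{\#}$, the desired contradiction. The main conceptual obstacle I anticipate is spotting that the ``plus'' root available in type $D$ disposes of the long-cycle case directly, bypassing any delicate treatment of whether $p\mid k$ or an induction on cycle length; once this root is identified, the remaining verifications are routine and use only that $R^{\#}$ is rootless and $p$ is odd.
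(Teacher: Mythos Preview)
Your proof is correct. The underlying mechanism---producing a root in $R^{\#}$ either directly from one application of $g$ or via a $-2\alpha$ combined with $p\alpha$ and B\'ezout---is the same as the paper's, but the organization differs. The paper splits first by cycle length: for a cycle of length $\ge 3$ it simply observes that one of $g(v_1\pm v_2)-(v_1\pm v_2)$ is already a root (choosing the sign according to whether $g(v_1)=v_2$ or $g(v_1)=-v_2$), and for a $2$-cycle it runs through the sign cases by hand. You instead introduce the cycle sign $\epsilon=\prod\epsilon_{i_j}$ and, when $\epsilon=+1$, perform a coordinate change to normalize all signs before reading off the root $g(v_{i_{k-1}}+v_{i_k})-(v_{i_{k-1}}+v_{i_k})=v_{i_1}-v_{i_{k-1}}$; when $\epsilon=-1$ you pass to $g^k$. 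Your route is a bit more conceptual and handles all cycle lengths uniformly, at the cost of the (harmless) conjugation step; the paper's route is more hands-on but avoids that step entirely. One phrasing nitpick: the gauge change sends $R^{\#}$ to its image under the coordinate automorphism rather than literally preserving it, but since that image is again rootless the argument goes through unchanged.
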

\begin{proof}
If there is an element $g\in H\cap (S_m\ltimes (\BZ/2\BZ)^m)$ whose image $\overline{g}$ in $S_{m}$ has an orbit of length at least 3, then without loss of generality, we assume that $\overline{g}(v_1)=v_2$ and $\overline{g}(v_2)=v_3$.

If $g(v_1)=v_2$, then $g(v_1+v_2)-(v_1+v_2)$ is a root. If $g(v_1)=-v_2$, then $g(v_1-v_2)-(v_1-v_2)$ is a root. Hence we rule out this possibility.

If $g\in H\cap (S_m\ltimes (\BZ/2\BZ)^m)$ such that $\overline{g}$ has a cycle of length two. We assume $\overline{g}(v_{1})=v_{2}$ and $\overline{g}(v_{2})=v_{1}$.

If $g(v_1)=v_2$ and $g(v_{2})=v_{1}$, we have $g(v_{1}-v_{2})-(v_{1}-v_{2})=-2(v_{1}-v_{2})$. Since $p(v_{1}-v_{2})\in R^\#$, we deduce $v_{1}-v_{2}\in R^\#$, a contradiction. 

If $g(v_1)=v_2$ and $g(v_{2})=-v_{1}$, then $g(v_{1}-v_{2})-(v_{1}-v_{2})=2v_{2}\in R^\#$, and $g(v_{1}+v_{2})-(v_{1}+v_{2})=-2v_{1}\in R^\#$, thus $2(v_{1}-v_{2})\in R^\#$, still a contradiction. The remaining cases are similar. We conclude that $\overline{g}=id$.

Suppose $g$ has more than one sign changes. Suppose $g(v_{1})=-v_{1}, g(v_{2})=-v_{2}$. Then $g(v_{1}-v_{2})-(v_1-v_2)=-2(v_{1}-v_{2})\in R^\#$. Since $p(v_{1}-v_{2})\in R^\#$, we have $v_{1}-v_{2}\in R^\#$, a contradiction. The lemma follows.
\end{proof}
    
 \begin{proposition}\label{prop: type D}
Let $(H,R)$ be a pseudo $p$-root pair with $R=D_m$ for $m\ge 4$, then $H\cap W(R)=\{id\}$.
    \end{proposition}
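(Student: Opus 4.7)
The plan is to derive this proposition directly from Lemma \ref{lemma: type D}, noticing that the Weyl group $W(D_m)$ is precisely the ``even sign change'' subgroup of $S_m\ltimes(\BZ/2\BZ)^m$, so the restrictions coming from the lemma collapse to triviality.

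Concretely, I would start by recalling the identification $W(D_m)\cong S_m\ltimes(\BZ/2\BZ)^{m-1}$, realized inside $S_m\ltimes(\BZ/2\BZ)^m$ as exactly those elements $g$ whose number of sign changes (i.e.\ indices $i$ with $g(v_i)=-\overline{g}(v_i)$) is \emph{even}. Note this inclusion $W(D_m)\subset S_m\ltimes(\BZ/2\BZ)^m$ is valid for all $m\ge 4$ (even though for $m=4$ the full $\Aut(D_4)$ is larger due to triality, that plays no role here since we only intersect with $W(R)$).

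Now pick an arbitrary $g\in H\cap W(D_m)$. Since $g\in H\cap(S_m\ltimes(\BZ/2\BZ)^m)$, Lemma \ref{lemma: type D} applies and gives two pieces of information: first, $\overline{g}=\mathrm{id}$ in $S_m$; second, there is at most one index $i$ for which $g(v_i)=-v_i$. Combined, the number of sign changes of $g$ is either $0$ or $1$. But membership in $W(D_m)$ forces this number to be even, hence it must be $0$, i.e.\ $g=\mathrm{id}$.

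Since there is essentially no obstacle beyond invoking the preceding lemma and the even-sign-change description of $W(D_m)$, the argument is immediate; the only point worth flagging carefully is the case $m=4$, where one must remember that even though $\Aut(D_4)$ strictly contains $S_4\ltimes(\BZ/2\BZ)^4$, the Weyl group itself still sits inside $S_4\ltimes(\BZ/2\BZ)^4$, so Lemma \ref{lemma: type D} is still applicable to every element of $H\cap W(D_m)$.
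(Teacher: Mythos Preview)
Your proof is correct and essentially identical to the paper's own argument: both invoke Lemma~\ref{lemma: type D} to force $\overline{g}=\mathrm{id}$ with at most one sign change, then use the even-sign-change description of $W(D_m)$ to conclude $g=\mathrm{id}$. Your explicit remark about the $m=4$ case (that $W(D_4)$ still sits inside $S_4\ltimes(\BZ/2\BZ)^4$ despite triality) is a welcome clarification that the paper leaves implicit.
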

    \begin{proof}
We have $W(D_m)=S_m\ltimes (\BZ/2\BZ)^{m-1} < S_m\ltimes (\BZ/2\BZ)^m$ consisting of elements with even number of sign changes. Suppose $g\in H\cap W(A_m)$, then by Lemma \ref{lemma: type D}, we have $\overline{g}=id$ and $g$ has at most one sign change. The only possibility is $g=id$. The proposition follows.
    \end{proof}

\begin{proposition}
\label{proposition: no p-root pair of type D}
Suppose $R=D_m, m\ge 5$, then $(H,R)$ is a pseudo $p$-root pair if and only if:
\begin{enumerate}
    \item $H=\{id\}$ or,
    \item $H=\langle g\rangle$ with $g(v_i)=-v_i$ for a unique $i$, and $g(v_j)=v_j$ for $j\in\{1,\cdots,m\}-\{i\}$. 
\end{enumerate}

Moreover, there is no $p$-root pair of type $D_m$ with $m\ge 5$.
\end{proposition}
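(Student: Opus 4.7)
The plan is to leverage Lemma~\ref{lemma: type D} to identify each element of $H$ individually, then use a closure-under-multiplication argument to show that $H$ can contain at most one nontrivial element, and finally handle the second assertion by direct inspection of invariant sublattices.

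First, I would note that for $m\ge 5$ we have $\Aut(D_m)=S_m\ltimes (\BZ/2\BZ)^m$, so $H\subset S_m\ltimes (\BZ/2\BZ)^m$ and Lemma~\ref{lemma: type D} applies to every $g\in H$. The lemma forces $\overline{g}=\mathrm{id}$ and at most one sign change, so every $g\in H$ is either the identity or of the ``single sign flip'' form described in (2). The key step is then to rule out the possibility of two distinct nontrivial elements $g_1,g_2\in H$ flipping the signs at positions $i_1\ne i_2$: the product $g_1g_2$ would flip signs at both $i_1$ and $i_2$, again contradicting Lemma~\ref{lemma: type D}. This shows $|H|\le 2$ with the precise structure claimed.

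For the converse direction I would verify that any such $H=\langle g\rangle$ yields a pseudo $p$-root pair by computing $R^{\#}$ explicitly. If $g$ flips the sign of $v_i$ only, then for any $\alpha=\sum a_j v_j\in R$ one has $g\alpha-\alpha=-2a_i v_i$, so $R^{\#}$ is generated inside $\BZ^m$ by $2v_i$ and $pR$. Working coordinate-wise, any element of $R^{\#}$ has its $j$-th coordinate ($j\ne i$) divisible by $p$. A root $\pm v_j\pm v_k$ would then require a coordinate $\pm 1$ to be divisible by $p$ (odd), which is impossible. Hence $R^{\#}$ contains no roots, confirming (2) gives a pseudo $p$-root pair; the case $H=\{\mathrm{id}\}$ is trivial.

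For the final assertion (no genuine $p$-root pair) I would simply examine $R^H$ in the two cases: when $H=\{\mathrm{id}\}$, obviously $R^H=R\ne 0$; when $H=\langle g\rangle$ fixes $v_j$ for all $j\ne i$, since $m\ge 5\ge 3$ we can pick $j,k\in\{1,\dots,m\}\setminus\{i\}$ and then $v_j-v_k\in R^H$ is nonzero. In either case the defining condition $R^H=\{0\}$ of a $p$-root pair fails. The main obstacle is really just organizing the case analysis cleanly; once Lemma~\ref{lemma: type D} is in hand, the rest is bookkeeping and a short lattice computation.
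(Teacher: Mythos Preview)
Your proposal is correct and follows essentially the same approach as the paper: invoke Lemma~\ref{lemma: type D} to pin down each element of $H$, use closure under multiplication to rule out two distinct sign-flips, verify the converse directly, and observe $R^H\neq\{0\}$ in both cases. The paper's proof is much terser (it leaves the closure argument and the verification that the listed $H$ give pseudo $p$-root pairs as ``straightforward''), so you have simply made explicit what the paper takes for granted.
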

\begin{proof}
If $m\ge 5$, we have $\Aut(D_m)=S_m \ltimes (\BZ/2\BZ)^m$. By Lemma \ref{lemma: type D}, any element $g\in H$ belongs to the two possibilities. It is straightforward to verify that the listed $H$ indeed form pseudo $p$-root pairs.

The last statement follows if we add one more restriction that $R^{H}=\{0\}$.
    \end{proof}

\begin{proposition}\label{prop:no D}
    Any $p$-root pair $(H,R)$ does not have a component of type $D_m$ with $m\ge 5$.
\end{proposition}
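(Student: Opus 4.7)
The plan is a proof by contradiction. Suppose $(H,R)$ is a $p$-root pair whose irreducible decomposition contains a component $R_0$ of type $D_m$ with $m\ge 5$. Because the irreducible decomposition of $R$ is canonical up to ordering, $H$ permutes the components, so the stabilizer $H_0 := \Stab_H(R_0)$ acts on $R_0$. Denote by $\bar H_0 \le \Aut(R_0)$ the image of $H_0$ under this restricted action (automatically faithful by construction).

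First I would verify that $(\bar H_0, R_0)$ is itself a pseudo $p$-root pair, so that the previous proposition can be applied. For any $g \in H_0$ and $\alpha \in R_0$, the vector $g\alpha - \alpha$ lies in $R_0$, and $pR_0 \subseteq pR$; together these give $(R_0)^{\#}_{\bar H_0} \subseteq R^{\#}_H$. Since any root of $R_0$ is a root of $R$, the hypothesis that $R^{\#}_H$ is rootless forces $(R_0)^{\#}_{\bar H_0}$ to be rootless. By Proposition \ref{proposition: no p-root pair of type D}, $\bar H_0$ must be either trivial or generated by a single sign change, and in either case $R_0^{\bar H_0}$ is a nonzero sublattice (of rank $m$ or $m-1$).

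The last step will be to contradict $R^H = 0$ by producing a nonzero $H$-invariant vector in $R$. Pick any nonzero $x \in R_0^{\bar H_0} = R_0^{H_0}$ and choose coset representatives $g_1, \ldots, g_k$ of $H/H_0$, so that $g_1R_0, \ldots, g_kR_0$ are the distinct pairwise orthogonal components in the $H$-orbit of $R_0$. Setting $y := \sum_{j=1}^{k} g_j x$, a standard averaging check shows $y$ is $H$-invariant: any $h \in H$ permutes the cosets $g_j H_0$ and the $H_0$-invariance of $x$ absorbs the inner factor. Since the summands $g_j x$ sit in distinct orthogonal components of $R$ and none of them is zero, we get $y \ne 0$, contradicting $R^H = 0$.

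The main technicality is the orbit-bookkeeping in the final step when $H$ genuinely permutes several copies of $D_m$; once that averaging argument is set up correctly, the conclusion is immediate from Proposition \ref{proposition: no p-root pair of type D}. I would expect the analogous statement to hold verbatim for any root type classified as forbidden by the preceding propositions (e.g.\ if one later rules out $A_m$ with certain parameters), so it may be worthwhile to state a general reduction lemma: if no irreducible $p$-root pair exists on a given root type $T$, then no $p$-root pair has a component of type $T$.
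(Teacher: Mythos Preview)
Your proof is correct and follows the same logic as the paper. In fact the paper isolates precisely the general reduction lemma you propose at the end (Lemma~\ref{lemma: component inherit the structure}: each component $(H_i,R_i)$ of a $p$-root pair is itself a $p$-root pair), and then the proof of Proposition~\ref{prop:no D} simply cites that lemma together with Proposition~\ref{proposition: no p-root pair of type D}; your argument is exactly this with the lemma inlined.
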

\begin{proof}
This directly follows from Lemma \ref{lemma: component inherit the structure} and Proposition \ref{proposition: no p-root pair of type D}.
\end{proof}
\subsection{ $p$-Root Pair of Type $D_4$}\label{sec:D4}
We have that $\Aut(D_4)=S_3\ltimes W(D_4)$. Consider the outer automorphisms: $x: \alpha_{3}\mapsto\alpha_{4}\mapsto\alpha_{1}\mapsto\alpha_{3}, \alpha_{2}\mapsto\alpha_{2}$,  $y: v_4\rightarrow -v_4$, i.e., $y:\alpha_3\mapsto \alpha_4\mapsto\alpha_3$. Then the outer automorphism group $S_3$ is generated by $x,y$. 

We take an element $ g\in\Aut(D_4)$ such that $g\alpha_2=-2\alpha_2-\alpha_1-\alpha_3-\alpha_4, g(\alpha_i)=\alpha_i+\alpha_2, i=1,3,4$. Spell this out, we have $g:-v_1\mapsto v_3\mapsto v_2$. In other words $g=(v_1\mapsto -v_1)\circ (132)\circ (v_1\mapsto -v_1)$. 
\begin{lemma}
    $\langle g\rangle\times \langle x\rangle$ is a Sylow 3-subgroup of $\Aut(D_4)$. And any subgroup of $\Aut(D_4)$ of order 3 is conjugate to one of the follows: $\langle g\rangle,\langle x\rangle, \langle gx\rangle$.
\end{lemma}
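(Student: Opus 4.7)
The plan is to split the assertion into two parts: (i) identifying a concrete Sylow $3$-subgroup of order $9$, and (ii) reducing the count of conjugacy classes of order-$3$ subgroups via Sylow theory and an explicit outer conjugation.

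For (i), I would first record the orders: $|W(D_4)|=2^3\cdot 4!=192$ and $|\Aut(D_4)|=6\cdot 192=2^7\cdot 3^2$, so any Sylow $3$-subgroup has order $9$. Next I would verify that $g\in W(D_4)$ and $x\notin W(D_4)$. Translating the definition of $g$ into coordinates, one gets $v_1\mapsto -v_3,\ v_2\mapsto -v_1,\ v_3\mapsto v_2,\ v_4\mapsto v_4$, an element of $S_4\ltimes\{\pm1\}^4$ with an even number of sign changes, hence in $W(D_4)$; on the other hand $x$ is the triality outer automorphism and lies in a non-trivial coset modulo $W(D_4)$. Consequently $\langle g\rangle\cap\langle x\rangle=\{e\}$. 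Then I would check that $g$ and $x$ commute: because both preserve the set of simple roots and the centralizer action is determined there, it suffices to compare $gx(\alpha_i)$ with $xg(\alpha_i)$ for $i=1,2,3,4$, which is a short computation using $g\alpha_i=\alpha_i+\alpha_2$ for $i=1,3,4$ and $x$ cyclically permuting $\alpha_1,\alpha_3,\alpha_4$ while fixing $\alpha_2$. Since $g$ and $x$ both have order $3$, commute, and generate trivially intersecting subgroups, $\langle g\rangle\times\langle x\rangle\cong(\BZ/3\BZ)^2$ has order $9$ and is therefore a Sylow $3$-subgroup of $\Aut(D_4)$.

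For (ii), by Sylow's theorems every order-$3$ subgroup of $\Aut(D_4)$ is contained in some Sylow $3$-subgroup, and all Sylow $3$-subgroups are conjugate. Hence it is enough to enumerate the subgroups of order $3$ inside $\langle g\rangle\times\langle x\rangle$ and identify them up to conjugacy in $\Aut(D_4)$. There are exactly four such subgroups: $\langle g\rangle,\ \langle x\rangle,\ \langle gx\rangle,\ \langle gx^2\rangle$. I would then exhibit an explicit conjugation collapsing the last two: since $y$ realizes the outer involution swapping $x$ with $x^{-1}$, we have $yxy^{-1}=x^2$, while a direct check (using that $y$ fixes $v_1,v_2,v_3$ and sends $v_4\mapsto -v_4$, and that $g$ fixes $v_4$) gives $ygy^{-1}=g$. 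Therefore $y(gx)y^{-1}=gx^2$, so $\langle gx\rangle$ and $\langle gx^2\rangle$ are conjugate, leaving the three classes $\langle g\rangle,\ \langle x\rangle,\ \langle gx\rangle$.

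The only mildly delicate point is the commutativity $gx=xg$: one must resist the temptation to think of $g$ as lying inside the $S_4\ltimes\{\pm1\}^4$ subgroup while $x$ mixes $v_i$'s with their negatives non-monomially (its action sends each $v_i$ to a half-integer combination of all four $v_j$'s), so direct coordinate computation is clumsy. The cleanest route is to do the verification on the four simple roots, where both $g$ and $x$ have compact expressions, and then invoke the fact that an automorphism of $D_4$ is determined by its action on a basis of simple roots. Everything else is bookkeeping with Sylow's theorems.
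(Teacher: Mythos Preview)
Your proposal is correct and follows essentially the same route as the paper: verify $gx=xg$ and $|\langle g,x\rangle|=9$, invoke Sylow to reduce to the four order-$3$ subgroups of $\langle g\rangle\times\langle x\rangle$, and conjugate $\langle gx\rangle$ to $\langle gx^2\rangle$ by $y$. One point worth noting: the paper's proof additionally checks that $\langle g\rangle,\langle x\rangle,\langle gx\rangle$ are pairwise \emph{non}-conjugate, by observing that $g$ and $x$ have $2$-dimensional fixed subspaces (rootless and root-containing, respectively) while $gx$ has trivial fixed subspace. Although this is strictly more than the lemma asserts, the subsequent proposition treats the three cases separately, so establishing that they are genuinely distinct conjugacy classes is what makes that case analysis exhaustive and non-redundant; you may want to add this observation.
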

\begin{proof}
    We first verify that $gx=xg, g^{3}=id$, and $\langle g, x\rangle\cong \BZ/3\BZ\times\BZ/3\BZ$ is a Sylow 3-subgroup of  $\Aut(D_4)$. Any subgroup of $\Aut(D_4)$ of order 3 has to be conjugate to one of $\langle g\rangle,\langle x\rangle, \langle gx\rangle, \langle gx^2\rangle$. 
    
    Notice that $g$ acts identically on $\langle v_4, v_1-v_2-v_3\rangle $ which contains no roots, while $x$ acts identically on $\{\alpha_2,\alpha_1+\alpha_3+\alpha_4\}$. We can calculate that $gx, gx^2$ do not fix any elements in $R$. Hence $\langle x \rangle, \langle g\rangle, \langle gx\rangle$ are not conjugate to each other in $\Aut(D_4)$. Recall that $y\in \Aut(D_4): v_4\mapsto -v_4$, we can verify that $y(gx)y^{-1}=gx^{2}$.    
    \end{proof}
\begin{proposition}\label{prop: p>=5 no d4}
    If $p\ge 5$, there is no $p$-root pair of type $D_4$.
\end{proposition}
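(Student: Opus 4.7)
The plan is to rule out any putative $p$-root pair $(H,R)$ with $R=D_4$ and $p\ge 5$ via a uniform averaging argument, the key point being that $\abs{H}$ is automatically small and coprime to $p$.

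The first step is to bound $\abs{H}$. By Proposition~\ref{prop: type D} (applied to $m=4$), we have $H\cap W(D_4)=\{id\}$, so the natural projection $H\hookrightarrow \Aut(D_4)/W(D_4)\cong S_3$ is injective; hence $\abs{H}$ divides $6$. Since $R^H=\{0\}$ while $R\ne \{0\}$, the group $H$ must be nontrivial, so $\abs{H}\in\{2,3,6\}$. Because $p\ge 5$, this yields $\gcd(\abs{H},p)=1$.

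The second step is the averaging. For any root $\alpha\in R$, the vector $\sum_{h\in H}h\alpha$ is manifestly $H$-fixed, hence lies in $R^H=\{0\}$. Rewriting $0=\sum_{h\in H}h\alpha$ as $\abs{H}\cdot\alpha=\sum_{h\in H}(\alpha-h\alpha)$ shows that $\abs{H}\cdot\alpha\in R^\#$, since each summand $\alpha-h\alpha$ is by definition a generator of $R^\#$. Combining this with $p\alpha\in pR\subseteq R^\#$ and $\gcd(\abs{H},p)=1$, B\'ezout's identity yields $\alpha\in R^\#$. This contradicts the defining property that $R^\#$ contain no roots, completing the proof.

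The main, and essentially only, obstacle is obtaining the bound $\abs{H}\mid 6$, which itself rests on the case analysis of involutions and $3$-cycles in $\Aut(D_m)$ carried out in Proposition~\ref{prop: type D}. Once this bound is in hand, the hypothesis $p\ge 5$ trivially forces $\abs{H}$ to be coprime to $p$, and the averaging trick applies verbatim without any case-split on the specific embedding of $H$ into $\Aut(D_4)=W(D_4)\rtimes S_3$. This is why the situation is genuinely different for $p=3$: there $\abs{H}$ may be divisible by $3$, the averaging over $H$ collapses, and a more delicate case-by-case analysis becomes unavoidable.
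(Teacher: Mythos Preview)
Your proof is correct and is a genuinely different, more uniform argument than the paper's. The paper proceeds by a case-by-case analysis: after noting that $H$ injects into $\Aut(D_4)/W(D_4)\cong S_3$, it examines the three conjugacy classes of order-$3$ subgroups of $\Aut(D_4)$ and computes $R^\#$ explicitly for each, then handles $\abs{H}=6$ and $\abs{H}=2$ separately, each time exhibiting either a root in $R^\#$ or a nonzero fixed vector. By contrast, you observe that once $\abs{H}\mid 6$ and $p\ge 5$, the condition $\gcd(\abs{H},p)=1$ lets a single averaging step (identical to the proof of Lemma~\ref{lemma: a case S_G has no roots}, which the paper states but does not invoke here) force $\alpha\in R^\#$ for every root $\alpha$; indeed, your argument is just that lemma applied with $S_H(R)=R$. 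Your route is shorter and explains transparently why $p\ge 5$ is the threshold. The paper's case analysis, however, has the side benefit of pinpointing exactly which subgroup (namely $\langle gx\rangle$) survives as a $3$-root pair, information that feeds directly into Lemma~\ref{lem:p=3,D4}.
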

\begin{proof}
By Proposition \ref{prop: type D}, we know that for a pseudo $p$-root pair $(H,D_4)$, $H$ is isomorphic to a subgroup of $\langle x,y\rangle=S_3$ induced by the following exact sequence:
\[
1\rightarrow W(D_4)\rightarrow \Aut(D_4)\rightarrow S_3\rightarrow 1
\]

We can verify that $R^\#$ has root $\alpha_2$ for $(\langle g\rangle, D_4)$. Hence $(\langle g\rangle, D_4)$ is not a pseudo-root pair. For $(\langle gx\rangle, D_4)$, 
\[
R^\#=\langle  3D_4, 2\alpha_1+\alpha_2+\alpha_3,\alpha_1+\alpha_3+\alpha_4, pD_4\rangle
 \]
Hence if $p\ne 3$, $R^\#$ has roots for $(\langle gx\rangle, D_4)$. 

We first assume that $|H|=3$, then by the previous arguments, since $p\ge 5$, $H$ has to be conjugate to $\langle x\rangle$ and then $R^{H}\ne 0$. Now suppose that $|H|=6, \langle x\rangle\subset H$ is a normal subgroup, and let $\gamma\in H$ be an order 2 element. Since $\langle x\rangle$ is normal, then $\gamma$ acts on $R^{x}$ which contains $\BZ\alpha_2+\BZ(\alpha_1+\alpha_3+\alpha_4)$. Since $\alpha_2\in R^{x}$, $\gamma$ cannot acts as $-id$ on $R^{x}$. Hence $R^{\langle \gamma,x\rangle}\ne 0$.
Suppose $|H|=2$. Since $R^{H}=\{0\}$, hence it acts as $-id$ on $R$. Then $R^{\#}$ contains roots.  To conclude, there is no $p$-root pair of type $D_4$ if $p\ge 5$.
\end{proof}
However, we have
    \begin{lemma}\label{lem:p=3,D4}
        If $p=3$, $(\langle gx\rangle, D_4)$ is a $p$-root pair.
    \end{lemma}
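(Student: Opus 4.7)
The plan is to verify directly both defining conditions of a $p$-root pair for $H=\langle gx\rangle$, $R=D_4$, $p=3$: namely $R^H=\{0\}$ and the rootlessness of $R^\#$. The explicit description of $R^\#$ is already in hand from the proof of Proposition \ref{prop: p>=5 no d4}; setting $p=3$ makes the $pD_4$ generator redundant, leaving
\[
R^\#=\langle 3D_4,\;2\alpha_1+\alpha_2+\alpha_3,\;\alpha_1+\alpha_3+\alpha_4\rangle.
\]

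For the vanishing of $R^{\langle gx\rangle}$, I would first compute the action of $gx$ on the simple root basis from the given formulas for $g$ and $x$:
\[
gx(\alpha_1)=\alpha_2+\alpha_3,\quad gx(\alpha_2)=-\alpha_1-2\alpha_2-\alpha_3-\alpha_4,\quad gx(\alpha_3)=\alpha_2+\alpha_4,\quad gx(\alpha_4)=\alpha_1+\alpha_2.
\]
Writing $v=a\alpha_1+b\alpha_2+c\alpha_3+d\alpha_4$ and imposing $(gx-I)v=0$ over $\BQ$, the four linear equations collapse quickly: $d=a+b$, $c=a-b$, and $-b+c-d=0$ combine to give $-3b=0$, so $b=0$; the remaining equation then reads $3a=0$ and forces $a=0$. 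Hence $R^{\langle gx\rangle}=\{0\}$ even rationally, so a fortiori integrally.

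The rootlessness of $R^\#$ reduces to a finite check. Since the images of $2\alpha_1+\alpha_2+\alpha_3$ and $\alpha_1+\alpha_3+\alpha_4$ in $D_4/3D_4\cong(\BF_3)^4$ are $\BF_3$-linearly independent, $[D_4:R^\#]=9$. I would exhibit an explicit surjection $\phi\colon D_4\to(\BF_3)^2$ defined by $\phi(\alpha_1)=(1,0)$, $\phi(\alpha_2)=(1,2)$, $\phi(\alpha_3)=(0,1)$, $\phi(\alpha_4)=(2,2)$, verify that both extra generators of $R^\#$ lie in $\ker\phi$, and conclude by cardinality that $\ker\phi=R^\#$. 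It then remains to tabulate $\phi$ on each of the twelve positive roots of $D_4$ (in their standard expansions in the simple-root basis, with highest root $\alpha_1+2\alpha_2+\alpha_3+\alpha_4$) and check that every image is nonzero in $(\BF_3)^2$; since roots occur in $\pm$ pairs, this rules out all $24$ roots at once.

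The only step that requires any care is this last tabulation — twelve two-coordinate computations modulo $3$ — but no single step is conceptually deep, and the lemma follows once the two computations are organised together.
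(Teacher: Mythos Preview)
Your proposal is correct and follows essentially the same strategy as the paper: identify $R^\#$ as an index-$9$ sublattice of $D_4$ and check that no root lies in it by working modulo $3$. The only cosmetic difference is that the paper switches to $v_i$-coordinates, rewriting the two generators as $2v_1-v_2-v_4$ and $v_1-v_2+2v_3$, and then uses that a $D_4$-root has exactly two nonzero $v$-entries, so it suffices to observe that for $(a,b)\in\BF_3^2$ not both zero no two of $\{2a+b,\,-a-b,\,2b,\,-a\}$ can vanish---a two-line case split in place of your twelve-root tabulation; your explicit check that $R^{\langle gx\rangle}=\{0\}$ is fine but was already recorded in the preceding lemma on the Sylow $3$-subgroup.
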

  \begin{proof}
    For $(\langle gx\rangle, D_4)$
    \begin{align*}
    R^\# &=\langle 3D_4, 2\alpha_1+\alpha_2+\alpha_3,\alpha_1+\alpha_3+\alpha_4\rangle\\
    &=\langle 3D_4, 2v_1-v_2-v_4, v_1-v_2+2v_3\rangle
    \end{align*}
    hence for all elements $\lambda\in R^\#$, suppose that $\lambda=a(2v_1-v_2-v_4)+b(v_1-v_2+2v_3)+\cdots$ we have that:
    \[
    \lambda\equiv (2a+b)v_1+(-a-b)v_2+2bv_3+(-a)v_4 \;(\md\; 3D_4).
    \]
    If $\lambda$ is a root, then exactly two of $\{(2a+b), (-a-b), 2b, (-a)\}$ has to be $0$ in $\BZ/3\BZ$ which is impossible.  
  \end{proof}
   
    \subsection{$p$-Root Pairs of Type $E$.}
For lattices of type $E$, we have $\Aut(E_6)=W(E_6)\rtimes (\BZ/2\BZ)$, $\Aut(E_7)=W(E_7)$, $\Aut(E_8)=W(E_8)$.

\begin{lemma}
\label{lemma: a case S_G has no roots}
Let $(H,R)$ be a pseudo $p$-root pair, such that $|H|$ is coprime to $p$, then $S_H(R)$ has no roots.
\end{lemma}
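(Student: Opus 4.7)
The plan is to leverage the averaging operator over $H$ together with the divisibility condition $\gcd(|H|,p)=1$, and conclude via B\'ezout's identity that any root in $S_H(R)$ would have to lie in $R_H^{\#}$, contradicting the definition of a pseudo $p$-root pair.

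First, I would unpack the containment $\sum_{g\in H}(g\alpha-\alpha)\in R_H^{\#}$ for every $\alpha\in R$. Rewriting this, $|H|\alpha-\sum_{g\in H}g\alpha\in R_H^{\#}$. The key intermediate observation is that for any $\beta\in S_H(R)$, the orbit sum $\sum_{g\in H}g\beta$ vanishes. Indeed, this element is $H$-invariant, hence lies in $R^H_{\BQ}$; on the other hand $S_H(R)$ is $H$-stable, so the sum also lies in $S_H(R)_{\BQ}$. Since $R^H_{\BQ}\cap S_H(R)_{\BQ}=\{0\}$ by definition of the covariant lattice, the sum is zero. Consequently, taking $\alpha=\beta$ in the displayed relation yields $|H|\,\beta\in R_H^{\#}$.

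Second, since $pR\subseteq R_H^{\#}$ by definition, we also have $p\,\beta\in R_H^{\#}$. Now invoke $\gcd(|H|,p)=1$: by B\'ezout, choose integers $a,b$ with $a|H|+bp=1$, so that
\[
\beta=a(|H|\,\beta)+b(p\,\beta)\in R_H^{\#}.
\]
If $\beta$ were a root of $R$ lying in $S_H(R)$, this would place a root inside $R_H^{\#}$, contradicting the hypothesis that $(H,R)$ is a pseudo $p$-root pair. Therefore $S_H(R)$ contains no roots.

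There is no real obstacle here; the only subtle point worth spelling out in the write-up is why $\sum_{g\in H}g\beta=0$ for $\beta\in S_H(R)$ (the two-sided containment in $R^H_{\BQ}$ and in $S_H(R)_{\BQ}$), since the rest of the argument is a single application of B\'ezout. This lemma will then be used in the sequel to rule out type $E$ components of $p$-root pairs under tameness assumptions, matching the statement of Theorem \ref{theorem: main2}(2).
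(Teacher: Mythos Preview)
Your proof is correct and follows essentially the same approach as the paper's: show that for $\beta\in S_H(R)$ the orbit sum vanishes, hence $|H|\beta\in R^\#$, and then combine with $p\beta\in R^\#$ and $\gcd(|H|,p)=1$ to force $\beta\in R^\#$, contradicting rootlessness of $R^\#$. The paper's write-up is slightly terser (it asserts $\sum_{g\in H}gv=0$ without the two-sided containment argument you spell out), but the logic is identical.
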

\begin{proof}
Take any $v\in S_H(R)$. Then $\sum\limits_{g\in H} gv=0$. Combining with $\sum\limits_{g\in H} (gv-v)\in R^\#$, we deduce $|H| v\in R^\#$. Since $\gcd(|H|,p)=1$ and $pv\in R^\#$, we have $v\in R^\#$. Therefore, $v$ is not a root.
\end{proof}

\begin{corollary}
\label{corollary: a way from root pair to Leech pair}
Suppose $(H,R)$ is a pseudo $p$-root pair, $\gcd(|H|,p)=1$ and $H\le W(R)$. Then $(H, S_H(R))$ is a Leech pair.
\end{corollary}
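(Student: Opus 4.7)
The plan is to verify the four defining conditions of a Leech pair (Definition \ref{definition: leech pair}) for the pair $(H, S_H(R))$: $S_H(R)$ is negative definite and even, there is no nontrivial $H$-fixed vector, the action of $H$ on $S_H(R)$ is faithful, $S_H(R)$ is rootless, and the induced action on $A_{S_H(R)}$ is trivial.

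Negative definiteness and evenness of $S_H(R)$ are inherited from $R$, which in the context of this paper is taken to be negative definite. By the very definition of the covariant sublattice, $S_H(R)\cap R^H=\{0\}$, giving the absence of nonzero $H$-fixed vectors. Faithfulness of the $H$-action on $S_H(R)$ follows from the rational orthogonal decomposition $R\otimes\BQ=(R^H\otimes\BQ)\oplus(S_H(R)\otimes\BQ)$: any $h\in H$ acting trivially on $S_H(R)$ also acts trivially on $R^H$ and hence on all of $R$, and is therefore the identity because $(H,R)$ is a pseudo $p$-root pair. Rootlessness of $S_H(R)$ is exactly the content of Lemma \ref{lemma: a case S_G has no roots}, whose hypothesis $\gcd(|H|,p)=1$ is assumed here.

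The only step that requires a small additional observation is that $H$ acts trivially on the discriminant group $A_{S_H(R)}$. By Lemma \ref{lemma:trivial action}, it suffices to show that $H$ acts trivially on $A_R$, and the hypothesis $H\le W(R)$ reduces this to the standard fact that the Weyl group of a root lattice acts trivially on its discriminant group. Concretely, for any root $\alpha\in R$ and any $x\in R^\vee$, the integer $\tfrac{2(x,\alpha)}{(\alpha,\alpha)}$ makes sense (because $(\alpha,\alpha)=\pm 2$ and $(x,\alpha)\in\BZ$), and $s_\alpha(x)-x=-\tfrac{2(x,\alpha)}{(\alpha,\alpha)}\,\alpha\in \BZ\alpha\subset R$, so each reflection $s_\alpha$ acts trivially on $R^\vee/R$; since $W(R)$ is generated by such reflections, the whole Weyl group, and in particular $H$, acts trivially on $A_R$.

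I do not expect any serious obstacle: the corollary is essentially a repackaging of Lemma \ref{lemma: a case S_G has no roots} and Lemma \ref{lemma:trivial action}, glued together by the Weyl-group-acts-trivially-on-the-discriminant fact. The only thing to be a little careful about is the faithfulness point, since it is conceivable a priori for $S_H(R)$ to be trivial; but then $R^H$ has full rank, $H$ acts trivially on $R\otimes\BQ$, and faithfulness on $R$ forces $H=\{e\}$, in which case the Leech pair condition holds vacuously.
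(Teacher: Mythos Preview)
Your proof is correct and follows the same approach as the paper's own proof, which likewise reduces to Lemma \ref{lemma: a case S_G has no roots} for rootlessness and to Lemma \ref{lemma:trivial action} together with the fact that $W(R)$ acts trivially on $A_R$ for the discriminant condition. Your version is simply more explicit in verifying the remaining Leech pair axioms (negative definiteness, absence of fixed vectors, faithfulness), which the paper leaves implicit.
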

\begin{proof}
Since the induced action of $W(R)$ on the discriminant group $A_R$ is trivial, by Lemma \ref{lemma:trivial action}, we know that the induced action of $H$ on the discriminant group of $S_H(R)$ is trivial. Then the corollary follows from Lemma \ref{lemma: a case S_G has no roots}.
\end{proof}
   
\begin{proposition}
\label{proposition: p group for type E}
Let $(H,R)$ be a pseudo $p$-root pair and $R$ is of type $E_n$ where $n=6,7,8$, then $H\cap W(R)$ is a $p$-group. 
\end{proposition}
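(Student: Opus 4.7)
The plan is to argue by contradiction using Lemma \ref{lemma: a case S_G has no roots}. Suppose $H\cap W(R)$ is not a $p$-group; by Cauchy's theorem, pick $g\in H\cap W(R)$ of prime order $q\neq p$ and set $K=\langle g\rangle$. Since $R^{\#}_K\subseteq R^{\#}_H$ is root-free, $(K,R)$ is itself a pseudo $p$-root pair; moreover $K\leq W(R)$ with $\gcd(|K|,p)=1$, so Lemma \ref{lemma: a case S_G has no roots} yields that the covariant lattice $S_g(R)=S_K(R)$ contains no root of $R$.

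I aim to contradict this by exhibiting a root of $R$ inside $S_g(R)$. The underlying mechanism is that if $\alpha\in R$ is a root with $\alpha+g\alpha+\cdots+g^{q-1}\alpha=0$, i.e.\ $\alpha\in S_g(R)$, then
\begin{equation*}
-q\alpha \;=\; \sum_{i=0}^{q-1}(g^{i}\alpha-\alpha) \;\in\; R^{\#};
\end{equation*}
combining with $p\alpha\in R^{\#}$ and $\gcd(p,q)=1$ forces $\alpha\in R^{\#}$, violating the pseudo $p$-root pair hypothesis. Hence everything reduces to the following claim: for every nontrivial $g\in W(E_n)$ ($n\in\{6,7,8\}$) of prime order $q\in\{2,3,5,7\}$, the covariant lattice $S_g(R)$ contains a root of $R$.

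For $q=2$ this is immediate, since every involution in a Weyl group is conjugate to a product of commuting reflections $s_{\beta_1}\cdots s_{\beta_k}$ in pairwise orthogonal roots $\beta_j$, and any such $\beta_j$ lies in $S_g(R)\cap\Phi(R)$. For odd primes $q\in\{3,5,7\}$ I would appeal to Carter's classification of conjugacy classes in Weyl groups: every $w\in W(R)$ is conjugate to an element whose ``Carter diagram'' $\Phi'\subseteq\Phi(R)$ is an admissible sub-root-system supported inside $S_w(R)$, and for $w$ of order $q$ the diagram $\Phi'$ is a union of $A_{q-1}$-type components, each of which contributes roots to $S_g(R)$. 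The main obstacle is exactly this claim for odd $q$: Carter's theorem supplies it uniformly, but if one prefers to avoid this input, it can instead be verified by a finite (if tedious) case analysis against the conjugacy class tables of $W(E_6), W(E_7), W(E_8)$, matching each prime-order class to its Carter diagram and reading off a root in the covariant lattice.
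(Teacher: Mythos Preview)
Your approach is correct and takes a genuinely different route from the paper's. Both arguments begin identically: pick $g\in H\cap W(R)$ of prime order $q\neq p$, and use Lemma~\ref{lemma: a case S_G has no roots} (or its corollary) to deduce that $S_g(R)$ is root-free. At this point the two diverge. The paper observes that $(\langle g\rangle, S_g(R))$ is a Leech pair of rank at most $8$, embeds it into the Leech lattice via Proposition~\ref{prop:emb leech}, and reads off a contradiction from the H\"ohn--Mason table (no nontrivial entry of rank $<8$ exists, and the rank-$8$ case is ruled out since $S_g(R)\subsetneq R$ would force a full-rank primitive sublattice to equal $R$, which has roots). Your route stays entirely inside the Weyl group: you exhibit a root in $S_g(R)$ directly. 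For $q=2$ this is immediate from the standard fact that involutions are products of reflections in orthogonal roots; for odd $q$ you appeal to Carter's classification. In fact Carter's full classification is more than you need: the elementary statement that any $1\neq w\in W(R)$ has reflection length $l(w)=\dim(V/V^w)$, with the roots in any minimal reflection expression linearly independent and hence spanning $S_w(R)\otimes\BR$, already places a root of $R$ inside $S_w(R)$. Your argument is therefore more self-contained---it avoids the Leech lattice and the H\"ohn--Mason table altogether---and actually gives a uniform proof that $H\cap W(R)$ is a $p$-group for \emph{every} simply-laced $R$, subsuming the separate type-$A$ and type-$D$ arguments earlier in the paper. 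The paper's approach, by contrast, exploits the heavier machinery that is already in place for the global classification.
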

\begin{proof}
Suppose not, we choose a $g_0\in H\cap W(R)$, of order $q$ an odd prime different from $p$. We put $H_0=\langle g_0\rangle$. By Corollary \ref{corollary: a way from root pair to Leech pair}, $(H_0,S_{H_0}(R))$ is a Leech pair. Apparently $\rank(S_{H_0}(R))+\ell(A_{S_{H_0}(R)})\le 24$, so $(H_0,S_{H_0}(R))$ is a subpair of  $(\Co_0, \BL)$. However, from H\"ohn-Mason list, this is impossible because $\rank(S_{H_0}(R))< \rank(R)\le 8$.
\end{proof}

    \begin{lemma}
\label{lemma: E}
Suppose $(H,R)$ is a $p$-root pair with $R$ of type $E_n$ ($n=6,7$ or $8$), then either $p=3$ and $R=E_6$ or $E_8$, or $p=5$ and $R=E_8$.
\end{lemma}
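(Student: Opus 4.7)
The plan is to combine the $p$-group structure from Proposition \ref{proposition: p group for type E} with a rational-representation dimension constraint, and then handle the residual $E_6$ case via a direct lattice computation using the $p$-root pair axiom.

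Since $\Aut(E_7)=W(E_7)$ and $\Aut(E_8)=W(E_8)$, Proposition \ref{proposition: p group for type E} forces $H$ itself to be a $p$-group when $R=E_7$ or $R=E_8$. For $R=E_6$ we have $\Aut(E_6)=W(E_6)\rtimes\langle\tau\rangle$ with $\tau$ an outer involution, so $N:=H\cap W(R)$ is a $p$-group and $[H:N]\le 2$; when $[H:N]=2$, Schur--Zassenhaus (applicable since $|N|$ is odd) provides an involution $\tau\in H\setminus N$.

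The main input is the standard fact that for any finite $p$-group $K$ with $p$ odd, every non-trivial irreducible $\BQ K$-module has $\BQ$-dimension divisible by $p-1$; hence $L^K=\{0\}$ for a $K$-lattice $L$ forces $(p-1)\mid\rank L$. Applied with $K=H$ and $L=R$: for $R=E_7$, $(p-1)\mid 7$ is impossible for odd primes; for $R=E_8$, $(p-1)\mid 8$ leaves odd $p\in\{3,5\}$; for $R=E_6$ with $H\subset W(R)$, $(p-1)\mid 6$ leaves odd $p\in\{3,7\}$, and $p=7$ is excluded since $7\nmid|W(E_6)|=51840$ forces $H=\{1\}$ and hence $R^H=R\ne\{0\}$.

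The subtlest case, which is the main obstacle, is $R=E_6$ with $[H:N]=2$ and odd $p\ne 3$. Here $\langle\tau\rangle$ acts on $R^N$, and $R^H=(R^N)^\tau=\{0\}$ forces $\tau$ to act as $-\mathrm{id}$ on $R^N$. If $N=\{1\}$, $R^\tau$ is the rank-$4$ sublattice of type $F_4$, contradicting $R^H=\{0\}$; this handles all $p\ge 7$, where $p\nmid|W(E_6)|$ forces $N=\{1\}$. For $p=5$, the $5$-Sylow of $W(E_6)$ is cyclic of order $5$, so $N=\langle g\rangle$ with $g$ of order $5$; since $\rank R^g=6-4=2$, Carter's parametrization identifies $g$ with the Coxeter element of an $A_4\subset E_6$, whence $R^N$ is the orthogonal complement of this $A_4$ in $E_6$. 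Because $E_6$ contains the Borel--de Siebenthal sub-root-system $A_5+A_1$ with $A_4\subset A_5$, the summand $A_1$ is orthogonal to $A_4$ and therefore lies in $R^N$; pick a root $v\in R^N$ from this $A_1$. The relations $(\tau-1)v=-2v\in R^\#_H$ and $5v\in R^\#_H$ together with $\gcd(5,2)=1$ yield $v\in R^\#_H$, contradicting the no-roots axiom defining a $p$-root pair, and this concrete lattice construction is essential because the representation-theoretic dimension bound alone cannot distinguish $(5,E_6)$ from the allowed $(5,E_8)$.
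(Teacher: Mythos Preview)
Your overall strategy is sound and in several places cleaner than the paper's case-by-case argument: the uniform $(p-1)\mid\rank$ constraint for $p$-groups dispatches $E_7$ in one line and pins down $E_8$ immediately, whereas the paper treats $p=3,5,7$ separately and handles $(p,R)=(3,E_7)$ by comparing $3$-Sylows of $W(E_6)$ and $W(E_7)$. There is, however, one genuine gap.

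Your claim that ``if $N=\{1\}$, then $R^\tau$ is the rank-$4$ sublattice of type $F_4$'' is false as stated. The element $\tau\in H\setminus N$ is merely \emph{some} outer involution of $E_6$, not necessarily (a conjugate of) the diagram automorphism. Since $-1\notin W(E_6)$ (the longest element of $W(E_6)$ equals $-\tau_0$ with $\tau_0$ the graph involution), the map $-1$ is itself an outer involution of $E_6$, and $R^{-1}=\{0\}$. So your argument breaks precisely when $\tau=-1$. The repair is immediate and in fact simpler than what you wrote: if $R^\tau=\{0\}$ then for every $v\in R$ one has $v+\tau v\in R^\tau=\{0\}$, forcing $\tau=-1$; then for any root $\alpha$ one gets $\tau(\alpha)-\alpha=-2\alpha\in R^\#_H$ and $p\alpha\in R^\#_H$, whence $\alpha\in R^\#_H$ since $p$ is odd, contradicting rootlessness. (The paper handles this via Lemma~\ref{lemma: a case S_G has no roots}: $|H|=2$ is coprime to $p$, so $S_H(R)$ is rootless, but $R^H=\{0\}$ would give $S_H(R)=R=E_6$.)

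The remaining $p=5$, $R=E_6$, $|N|=5$ case is essentially the paper's argument with cosmetic differences: the paper embeds $A_4\oplus A_1$ primitively in $E_6$ and invokes Lemma~\ref{lemma: extending group action} together with Sylow conjugacy to place a root in $R^N$; you use Carter's classification and the Borel--de Siebenthal subsystem $A_5+A_1$ to the same end. Both then finish by noting the involution acts as $-1$ on $R^N$ and applying the $-2v$, $5v$ trick. Your invocation of Carter is correct but implicitly uses that the $A_4$ attached to $g$ can be taken inside the $A_5$ summand; this follows because all order-$5$ subgroups of $W(E_6)$ are conjugate (Sylow, as the $5$-Sylow is cyclic of order $5$), and you should make this explicit.
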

\begin{proof}
We have $\abs{W(E_{8})}=696729600=2^{14}\cdot 3^{5}\cdot 5^{2}\cdot 7$, $\abs{W(E_{7})}=2903040=2^{10}\cdot3^{4}\cdot5\cdot7$, $\abs{W(E_{6})}=51840=2^{7}\cdot 3^{4}\cdot 5$. Suppose $(H,R)$ is a $p$-root pair. Then for $R=E_7,E_8$ only $p=3,5,7$ are possible. 

For $R=E_6$, we first show that $H\cap W(E_6)\ne \{id\}$. Suppose that $H\cap W(E_6)= \{id\}$, then $|H|=2$, which implies $S_H(E_6)$ is rootless by Lemma \ref{lemma: a case S_G has no roots}. So $R^H$ is nonzero, contradiction.
By Proposition \ref{proposition: p group for type E}, $|H\cap W(E_6)|$ is a power of $p$. Hence $p\big{|} |W(E_6)|$, $p=3$ or $5$. 

If $p=7$, then $R=E_8$ or $E_7$.  The Sylow $7$-subgroup of $W(R)$ is of order $7$, hence $H\cong \BZ/7\BZ$. Since there are only two irreducible representation of $\BZ/7\BZ$ over $\BQ$, which are of dimension $1$ and $6$, there must exist nonzero invariant vector in $R\otimes\BQ$, contradiction.

If $p=5$, we exclude the possibilities $R=E_6, E_7$ as follows. Suppose $(H,R)$ is $p$-root pair with $R=E_6$ or $E_7$. If $H\le W(R)$, then $H$ is isomorphic to $\BZ/5\BZ$. Consider representations of $H$ over $\BQ$, we know that $R^H$ is nonzero and hence $(H,R)$ cannot be a $p$-root pair. 

If $H$ is not a subgroup of $W(R)$, then we must have $R=E_6$, and the order of $H$ has to be $10$. We put $H'=H\cap W(E_6)$, then $H'$ is a normal subgroup of $H$ of order $5$. The representation $(E_6)_\BQ$ of $H'$ must be a direct sum of a $2$-dimensional invariant subspace and a $4$-dimensional irreducible representation. Thus $\rank(E_6^{H'})=2$. Next we show that $E_6^{H'}$ is not rootless. Since by Sylow theory, every subgroup of $W(E_6)$ with order $5$ is conjugate to $H'$, we only need to find one automorphism of $E_6$ with order $5$ and fixing one root. Since the Dynkin graph of $A_4\oplus A_1$ is a subgraph of the Dynkin graph of $E_6$, there exists a primitive embedding $A_4\oplus A_1 \hookrightarrow E_6$. By Lemma \ref{lemma: extending group action}, any faithful action of $\BZ/5\BZ$ on $A_4$ extends to an action on $E_6$ which fixes $A_1$. Therefore, there exists an automorphism of $E_6$ with order $5$ and fixing at least one root. 

We take an involution $g\in H$. Then $g H' g^{-1}=H'$, and $g(E_6^{H'})=E_6^{H'}$. Since $E_6^{H}=\{0\}$, the involution $g$ is a scalar by $-1$ on $E_6^{H'}$. Take a root $v\in E_6^{H'}$. Then $2v=-g(v)+v\in R^\#$, which implies $v\in R^\#$, contradiction. We conclude that for $5$-root pair $(H,R)$ of $E$-type, $R$ must be $E_8$.

If $p=3$, we show that $R\ne E_7$. Actually, since the exponents of $3$ in $W(E_6)$ and $W(E_7)$ are the same, any action of $3$-group on $E_7$ comes from an action on $E_6\subset E_7$. This implies the existence of a fixed vector in $E_7$, which is a contradiction.
\end{proof}
We can conclude that:
\begin{proposition}
\label{proposition: p>=7 no type E}
Let $(H,R)$ be a $p$-root pair. If $p\ge 7$, then $R$ has no component of type $E$. 
\end{proposition}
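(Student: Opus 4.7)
The plan is to reduce Proposition~\ref{proposition: p>=7 no type E} to the irreducible case already handled by Lemma~\ref{lemma: E}, via exactly the same template as the proof of Proposition~\ref{prop:no D} for type $D_m$ with $m\ge 5$. I would argue by contradiction: suppose $(H,R)$ is a $p$-root pair with $p\ge 7$ and that $R$ has an irreducible component $R_0$ of type $E_n$ for some $n\in\{6,7,8\}$. Since $H$ acts on $R$ by lattice automorphisms and therefore permutes the irreducible components, let $\mathcal{O}=\{g_1R_0,\ldots,g_mR_0\}$ be the $H$-orbit of $R_0$, where $g_1=e,g_2,\ldots,g_m$ is a set of coset representatives for $H/H'$ with $H'=\mathrm{Stab}_H(R_0)$.

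Next I would verify that the restriction to $R_0$ gives a pseudo $p$-root pair. Let $\overline{H'}$ denote the image of $H'$ in $\Aut(R_0)$ (quotient by the kernel of the action on $R_0$), so that $\overline{H'}$ acts faithfully on $R_0$. For any $g\in H'$ and $\alpha\in R_0$, the vector $g\alpha-\alpha$ lies in $R_0$ (since $g$ preserves $R_0$) and also in the ambient sublattice $R^{\#}$, which by hypothesis is rootless; moreover $pR_0\subset pR\subset R^{\#}$. Hence the generators of $(R_0)^{\#}_{\overline{H'}}$ all lie in the rootless lattice $R^{\#}$, so $(R_0)^{\#}_{\overline{H'}}$ is itself rootless and $(\overline{H'},R_0)$ is a pseudo $p$-root pair. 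This is precisely the content one expects from Lemma~\ref{lemma: component inherit the structure}.

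The final step is to produce an $H$-invariant vector. Since $p\ge 7$ and $R_0$ is of type $E_n$, Lemma~\ref{lemma: E} forbids any genuine $p$-root pair on $R_0$; therefore $R_0^{\overline{H'}}=R_0^{H'}$ must be nonzero. Fix a nonzero $v\in R_0^{H'}$ and form
\begin{equation*}
w=\sum_{j=1}^{m}g_jv\in R.
\end{equation*}
Then $w$ is manifestly $H$-invariant, and since each summand $g_jv$ lies in a distinct irreducible component $g_jR_0$ with $R_0$-projection equal to $v\ne 0$, we have $w\ne 0$. This contradicts $R^H=\{0\}$ and completes the proof. The only mild subtlety is the faithfulness requirement in the definition of a pseudo $p$-root pair, which forces one to pass from $H'$ to its image $\overline{H'}$ in $\Aut(R_0)$; once this bookkeeping is dispensed with, the argument is entirely formal and parallels the type $D$ case.
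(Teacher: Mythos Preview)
Your argument is correct and follows essentially the same route as the paper. The paper states the proposition without an explicit proof, since it is immediate from Lemma~\ref{lemma: component inherit the structure} (which gives a $p$-root pair $(H_i,R_i)$ on each component) together with Lemma~\ref{lemma: E} (which rules out irreducible $p$-root pairs of type $E$ for $p\ge 7$); your proof simply unpacks the second half of Lemma~\ref{lemma: component inherit the structure} inline---the orbit-sum construction of an $H$-invariant vector is exactly the contrapositive of that lemma's proof.
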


We now analyze $p=5$ and $R= E_{8}$. Since $\abs{W(E_{8})}=696729600=2^{14}\cdot 3^{5}\cdot 5^{2}\cdot 7$, the Sylow-$5$ subgroup of $W(E_{8})$ is of order $25$. The following description of $E_{8}$ lattice is referred to Kac \cite[\S 6.7]{Kac90}:
	\begin{enumerate}
		\item $E_{8}=\{\sum k_{i}v_{i}\in \BR^{8}|k_{i}\in\BZ\;\text{or all}\; k_{i}\in 1/2\BZ, \text{and} \;\sum k_{i}\in 2\BZ\}$, 
		\item roots:$\{\pm v_{i}\pm v_{j}, 1\le i<j\le 8\}\cup \{\frac{1}{2}(\pm v_{1}\pm v_{2}\pm\cdots\pm v_{8})|\text{even number of minuses}\}$,
		\item positive simple roots: $\{\alpha_{i}=v_{i+1}-v_{i+2}|i=1,\cdots,6\}$ and $\alpha_{7}=\frac{1}{2}(v_{1}-v_{2}-\cdots-v_{7}+v_{8})$ and $\alpha_{8}=v_{7}+v_{8}$,
		\item the highest root is $\theta=v_{1}+v_{2}$.
	\end{enumerate}
The roots $\alpha_i$ ($1\le i\le 8$) and $-\theta$ form the affine Dynkin graph:
\begin{equation*}
\widetilde{E}_8\colon \dynkin [extended]E8.
\end{equation*}
By deleting the node corresponding to $\alpha_4$, we obtain a subgraph of $\widetilde{E}_8$ which is the Dynkin graph of $A_4\oplus A_4$. So we have an embedding of $A_{4}\oplus A_{4}$ into $E_{8}$. Explictly, $\{-\theta,\alpha_{1},\alpha_{2}, \alpha_{3}\}$ and $\{\alpha_{8},\alpha_{5},\alpha_{6},\alpha_{7}\}$ are naturally bases for the two pieces of $A_4$ respectively.

For any root $\alpha$, we simply put $s_{\alpha}$ as the reflection defined by $\alpha$, i.e., $s_{\alpha}(x)=x-\langle x,\alpha\rangle\alpha$.
\begin{lemma}
	There is a faithful action of $\BZ/5\BZ\times \BZ/5\BZ\subset W(A_{4})\times W(A_{4})$ on $E_{8}$.
\end{lemma}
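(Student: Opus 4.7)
The plan is to exploit the embedding $A_4\oplus A_4\hookrightarrow E_8$ just described and transport $5$-cycles in each Weyl-group factor to automorphisms of $E_8$. Since both lattices have rank $8$, the sublattice has finite index, so every automorphism of $A_4\oplus A_4$ is determined by a $\BQ$-linear automorphism of $(A_4\oplus A_4)\otimes\BQ=E_8\otimes\BQ$; in particular, once an action on $E_8$ is produced extending a faithful action on $A_4\oplus A_4$, faithfulness on $E_8$ will be automatic.

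First I would pick the generators explicitly. Since $W(A_4)\cong S_5$ contains a $5$-cycle, let $\sigma_1\in W(A_4^{(1)})$ and $\sigma_2\in W(A_4^{(2)})$ be $5$-cycles in the Weyl groups of the two factors $A_4^{(1)}=\BZ\langle-\theta,\alpha_1,\alpha_2,\alpha_3\rangle$ and $A_4^{(2)}=\BZ\langle\alpha_8,\alpha_5,\alpha_6,\alpha_7\rangle$ coming from deletion of the $\alpha_4$ node of $\widetilde{E}_8$. The pair $(\sigma_1,\sigma_2)$ generates a copy of $\BZ/5\BZ\times \BZ/5\BZ$ inside $W(A_4)\times W(A_4)\subset \Aut(A_4\oplus A_4)$, which acts faithfully on $A_4\oplus A_4$.

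The main step is then to show that this action preserves the overlattice $E_8$. From $A_4\oplus A_4\subset E_8\subset (A_4\oplus A_4)^{\vee}$, the quotient $E_8/(A_4\oplus A_4)$ is a subgroup of the discriminant group $A_{A_4\oplus A_4}\cong \BZ/5\BZ\oplus \BZ/5\BZ$, so it suffices to verify that $(\sigma_1,\sigma_2)$ acts trivially on this discriminant group. This follows from the general fact that the Weyl group of any root lattice $R$ acts trivially on $A_R$: for any root $\alpha\in R$ and any $v\in R^{\vee}$ one has $s_\alpha(v)-v\in\BZ\cdot\alpha\subset R$, so $[s_\alpha(v)]=[v]$ in $A_R$. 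Alternatively, one can apply Lemma \ref{lemma: extending group action} directly with $M=A_4\oplus A_4$ and $L=E_8$. The only substantive ingredient is the triviality on the discriminant, which is this one-line reflection check; everything else is formal.
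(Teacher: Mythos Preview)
Your argument is correct and is a genuinely different route from the paper's. The paper writes down explicit elements $a=s_{-\theta}s_{\alpha_1}s_{\alpha_2}s_{\alpha_3}$ and $b=s_{\alpha_8}s_{\alpha_5}s_{\alpha_6}s_{\alpha_7}$ in $W(E_8)$, computes their action on all simple roots (in particular on $\alpha_4$), and verifies by hand that $ab=ba$. Your approach bypasses all of this: since $A_4\oplus A_4$ has full rank in $E_8$, the overlattice $E_8$ is determined by a subgroup of $A_{A_4\oplus A_4}$, and the one-line reflection computation $s_\alpha(v)-v\in\BZ\alpha$ (or Lemma~\ref{lemma: extending group action}) shows that any element of $W(A_4)\times W(A_4)$ preserves this subgroup, hence preserves $E_8$. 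Commutativity and faithfulness are then inherited for free from the action on the finite-index sublattice.

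What each approach buys: your argument is cleaner and proves the slightly stronger statement that \emph{all} of $W(A_4)\times W(A_4)$ acts on $E_8$; since $\Aut(E_8)=W(E_8)$, it also lands automatically in $W(E_8)$, so you recover that $\langle a,b\rangle$ is a Sylow $5$-subgroup of $W(E_8)$ as needed afterwards. The paper's explicit formulas, on the other hand, are exactly what is used in the very next proposition, where one computes $a(\alpha_4)-\alpha_4$ to see that the full Sylow $5$-subgroup does not give a $5$-root pair. With your approach that computation would still have to be done separately, so the paper's explicitness is not wasted.
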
 
\begin{proof}
	We denote the natural generators of $\BZ/5\BZ\times\BZ/5\BZ\subset W(A_4)\times W(A_4)=S_5\times S_5$, by $a$ and $b$. We want to extend the action of $a,b$ on the lattice $A_{4}$ to an action on $E_{8}$.
	
	We define $a(-\theta)=\alpha_{1},a(\alpha_{1})=\alpha_{2},a(\alpha_{2})=\alpha_{3}, a(\alpha_{3})=\theta-\alpha_{1}-\alpha_{2}-\alpha_{3}$, and $a(\alpha_{i})=\alpha_{i}$ for $i=5,6,7,8$. Since $\theta=2\alpha_{1}+3\alpha_{2}+4\alpha_{3}+5\alpha_{4}+6\alpha_{5}+4\alpha_{6}+2\alpha_{7}+3\alpha_{8}$ and $a(-\theta)=\alpha_{1}$, thus $a(\alpha_{4})=-\theta+\sum\limits_{i=1}^{4}\alpha_{i}$. We can verify that $a=s_{-\theta}s_{\alpha_1}s_{\alpha_2}s_{\alpha_3}$, hence it defines an element of $W(E_{8})$, which we still denote by $a$.
	
We define $b(\alpha_i)=\alpha, i=1,2,3, b(\theta)=\theta, b(\alpha_{8})=\alpha_{5}, b(\alpha_{5})=\alpha_{6},b(\alpha_{6})=\alpha_{7},b(\alpha_{7})=-\sum\limits_{i=5}^{8}\alpha_{i}$, thus $b(\alpha_4)=\alpha_4+\alpha_5+\alpha_8$. We can verify that $b=s_{\alpha_8}s_{\alpha_5}s_{\alpha_6}s_{\alpha_7}$, hence it defines an element of $W(E_{8})$, which we still denote by $b$. 

    We can verify that $ab=ba$. For example $ab(\alpha_4)=ba(\alpha_4)=-\theta+\sum\limits_{i=1}^{5}\alpha_i+\alpha_8$. In particular, $\langle a,b\rangle$ is a Sylow-$5$ subgroup of $W(E_8)$.
\end{proof}
\begin{proposition}\label{prop: E contributes 1}
	If $p=5$, $(H,E_{8})$ is a $p$-root pair, then $H\cong \mathbb{Z}/5\mathbb{Z}$. Moreover, there exists a $p$-root pair $(H,E_{8})$ with $H\cong \mathbb{Z}/5\mathbb{Z}$.
\end{proposition}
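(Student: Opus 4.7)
The plan is to combine Proposition~\ref{proposition: p group for type E} with the $5$-adic structure of $W(E_8)$. Since $\Aut(E_8)=W(E_8)$ and the Sylow $5$-subgroup has order $25$ (realized by $\langle a,b\rangle$), the constraint that $H$ is a $5$-group forces $|H|\in\{1,5,25\}$. The case $|H|=1$ is immediate, since $R^H=E_8\neq 0$ would fail the $p$-root pair condition. It remains to rule out $|H|=25$ and to exhibit a $p$-root pair with $|H|=5$.

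For the uniqueness direction, I would argue that if $|H|=25$, then by Sylow's theorem $H$ is conjugate in $W(E_8)$ to $\langle a,b\rangle$, so without loss of generality $H=\langle a,b\rangle$. In particular $a\in H$, so $(a-1)\alpha_4\in R^\#$. A direct computation using $a(\alpha_4)=-\theta+\alpha_1+\alpha_2+\alpha_3+\alpha_4$ yields $(a-1)\alpha_4=-\theta+\alpha_1+\alpha_2+\alpha_3=-v_1-v_5$, which is a root of $E_8$, contradicting the rootlessness of $R^\#$.

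For existence, I would take $H=\langle ab\rangle\cong \BZ/5\BZ$. Writing $A_4^{(1)}=\langle -\theta,\alpha_1,\alpha_2,\alpha_3\rangle$ and $A_4^{(2)}=\langle \alpha_8,\alpha_5,\alpha_6,\alpha_7\rangle$ for the two $A_4$-summands of $A_4\oplus A_4\subset E_8$: since $ab$ acts as $a$ on $A_4^{(1)}$ and as $b$ on $A_4^{(2)}$, neither with nonzero fixed vectors over $\BQ$, and $E_8\otimes\BQ$ is the orthogonal sum $(A_4^{(1)}\oplus A_4^{(2)})\otimes\BQ$, one obtains $E_8^H=0$. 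For rootlessness of $R^\#$, first observe $R^\#=5E_8+(ab-1)E_8\subset A_4^{(1)}\oplus A_4^{(2)}$, using $5\alpha_4\in A_4^{(1)}\oplus A_4^{(2)}$ and the explicit value $(ab-1)\alpha_4=(-v_1-v_5)+(\alpha_5+\alpha_8)$; hence no mixed root of $E_8$ lies in $R^\#$. The main technical obstacle is to show $R^\#\cap A_4^{(1)}=5A_4^{(1)}+(a-1)A_4^{(1)}$ (and symmetrically for $A_4^{(2)}$): the $A_4^{(2)}$-component of a general element of $R^\#$, tracked via the $b$-invariant functional $\nu\colon A_4^{(2)}\otimes\BF_5\to\BF_5$ with kernel $(b-1)(A_4^{(2)}\otimes\BF_5)$ (which exists because $A_4^{(2)}\otimes\BF_5$ is a single Jordan block of size $4$ at eigenvalue $1$ as $\BF_5[b]$-module), forces the integer $\alpha_4$-coefficient to be divisible by $5$, collapsing the remaining glue contributions into $5A_4^{(1)}+(a-1)A_4^{(1)}$. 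Finally, the parallel $a$-invariant functional $\mu$ on $A_4^{(1)}\otimes\BF_5$---which under the identification $A_4^{(1)}\cong\{\sum_i c_i e_i\in\BZ^5:\sum_i c_i=0\}$, with $a$ cyclically shifting $e_0,\dots,e_4$, equals $\mu(v)=\sum_i i v_i\pmod 5$---satisfies $\mu(e_i-e_j)=i-j\not\equiv 0\pmod 5$ for distinct $i,j\in\{0,\dots,4\}$, so no root of $A_4^{(1)}$ lies in $5A_4^{(1)}+(a-1)A_4^{(1)}$.
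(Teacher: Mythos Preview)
Your argument is correct. The first half---ruling out $|H|=25$ by conjugating to the explicit Sylow subgroup $\langle a,b\rangle$ and observing that $(a-1)\alpha_4=-v_1-v_5$ is a root---is exactly what the paper does.

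For existence, the paper simply cites \cite[Remark 3.3]{GL11}, whereas you supply a self-contained construction with $H=\langle ab\rangle$. Your verification goes through: since $5\alpha_4$ and $(ab-1)\alpha_4$ both lie in $A_4^{(1)}\oplus A_4^{(2)}$, one has $R^\#\subset A_4^{(1)}\oplus A_4^{(2)}$, so only $A_4$-roots are candidates. The functional $\nu$ indeed detects $A_4^{(2)}/T^{(2)}\cong\BF_5$ (where $T^{(i)}=5A_4^{(i)}+(\text{gen}-1)A_4^{(i)}$), and one checks $\nu(\pi_2(5\alpha_4))=0$ while $\nu(\pi_2((ab-1)\alpha_4))=\nu(\alpha_5+\alpha_8)\equiv 3\pmod 5$ is a unit; this forces the coefficient of $(ab-1)\alpha_4$ in any element of $R^\#\cap A_4^{(1)}$ to be divisible by $5$, giving $R^\#\cap A_4^{(1)}=T^{(1)}$. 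Your functional $\mu$ then witnesses that $T^{(1)}$ is rootless. The phrase ``the integer $\alpha_4$-coefficient'' is slightly ambiguous---what you actually constrain is the coefficient of the glue generator $(ab-1)\alpha_4$, not the $\alpha_4$-coordinate in $E_8$ (which is automatically divisible by $5$ once inside $A_4\oplus A_4$)---but the logic is sound.

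Your approach has the advantage of being explicit and internal to the paper's setup, and it parallels the $T(A_{p-1})$ analysis of \S\ref{section: application of p-root pair} nicely. The paper's citation is shorter but opaque.
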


\begin{proof}
	First notice that any subgroup of $W(E_{8})$ of order $25$ is a Sylow subgroup which is conjugate to the action we construct above. Since $a(\alpha_{4})-\alpha_{4}=-\theta+\alpha_{1}+\alpha_{2}+\alpha_{3}=-v_1-v_5$ is a root, $H$ cannot be a $5$-Sylow subgroup of $W(E_8)$. The last statement follows from \cite[Remark 3.3]{GL11}.
\end{proof}
\subsection{On Reducible $p$-Root Pairs}

\begin{proposition}\label{prop: intersection p group}
   If $(H,R)$ is a pseudo $p$-root pair, then $H\cap W(R)$ is a $p$-group. 
\end{proposition}
\begin{proof}
    Let $H_i$ be as before. Then we have a natural map:
    \[
    H\cap W(R)\rightarrow \prod H_i\cap W(R_i)
    \]
    since $H\cap W(R)$ stabilizes each irreducible component of $R$. It is also injective because if an element stabilizes and acts trivially on each irreducible component  then it acts trivially on $R$. Then the lemma follows from Lemma \ref{lem: pseudo A}, Proposition \ref{prop: type D} and Proposition \ref{proposition: p group for type E}.
\end{proof}
\section{Application of $p$-Root Pairs}
\label{section: application of p-root pair}
Let $G$ be a finite group acting faithfully and symplectically on a supersingular K3 surface $X$ with Artin invariant $\sigma\ge 2$. Recall that the action of $G$ on $N_{p,\sigma}$ can be extended to $N_{p,1}$ since $G$ acts trivially on $A_{N_{p,\sigma}}$. This gives rise to $(G_R, R)$ where $R\subset S_{G}(N_{p,1})$ is the root lattice.  Moreover we have the following exact sequence:
\[
1\rightarrow G_L\rightarrow G\rightarrow G_R\rightarrow 1.
\]
And by Lemma \ref{lemma: supersingular to root pair}, $(G_R,R)$ is a $p$-root pair. In this section, we apply our results on $p$-root pairs to show that when $p\ge 13$, we always have $(p,|G|)=1$. This provides an alternative proof of \cite[Theorem 2.1]{DK09} for supersingular K3. And then we calculate the upper bounds of the $p$-part of $G$ for $p=3, 5, 7, 11$.

\subsection{New Proof for Dolgachev--Keum's Result}\label{sec:new DK}
We start by considering a $p$-root pair $(H,R)$ with $R=A_{p-1}$.
\begin{definition}
For $R=A_{p-1}$ with chosen simple roots $\alpha_i=v_i-v_{i+1}$, we define $T(R)=\{\sum\limits_{i=1}^{p-1}k_i\alpha_i \big{|}k_i\in\BZ, \sum\limits_{i=1}^{p-1}k_i\equiv 0 \;(\md\; p)\}$.
\end{definition}

\begin{lemma}
\label{lemma: disc form of T(A)}
We have 
\begin{equation*}
T(A_{p-1})\subset A_{p-1} \subset A_{p-1}^\vee \subset T(A_{p-1})^\vee
\end{equation*}
with each adjacent inclusion of index $p$. And 
\begin{enumerate}
\item $q_{A_{p-1}}=p^{\epsilon 1}$ with $\epsilon=+$ when $p\equiv 1 \;(\md\; 4)$ and $\epsilon=-$ otherwise. 
\item $q_{T(A_{p-1})}=p^{+3}$.
\end{enumerate}
\end{lemma}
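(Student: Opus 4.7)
The plan is to first establish the index-$p$ chain of inclusions, and then compute the two discriminant forms in turn. By definition $T(A_{p-1})\subset A_{p-1}$ is the kernel of the surjection $\sum k_i\alpha_i\mapsto \sum k_i\pmod p$, so has cokernel $\BZ/p\BZ$. The inclusion $A_{p-1}\subset A_{p-1}^\vee$ has index $p$ since $|A_{A_{p-1}}|=p$, and dualizing $T(A_{p-1})\hookrightarrow A_{p-1}$ gives $A_{p-1}^\vee\hookrightarrow T(A_{p-1})^\vee$, also of index $p$. These combine to produce the chain with total index $p^3$, matching $|A_{T(A_{p-1})}|=p^3$. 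For $q_{A_{p-1}}$, I would take the fundamental weight $\omega_1=\frac{1}{p}\sum_{j=1}^{p-1}(p-j)\alpha_j$ whose class generates $A_{p-1}^\vee/A_{p-1}$; the identity $\langle\omega_1,\omega_1\rangle=(p-1)/p\equiv -1/p\pmod\BZ$ shows $q_{A_{p-1}}=p^{\epsilon 1}$ with $\epsilon=+$ exactly when $-1$ is a square modulo $p$, i.e., when $p\equiv 1\pmod 4$.

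For $q_{T(A_{p-1})}$, writing $T=T(A_{p-1})$ for brevity, I would apply Nikulin's overlattice--isotropic-subgroup correspondence. Since $A_{p-1}$ is an integral overlattice of $T$, the subgroup $I:=A_{p-1}/T\cong\BZ/p\BZ$ is isotropic in $A_T$, and $I^\perp/I$ is identified with $A_{A_{p-1}}\cong p^{\epsilon 1}$ from part (1). Nondegeneracy of the discriminant form forces $|I^\perp|=p^2$ and yields a perfect pairing $I\times(A_T/I^\perp)\to\BQ/\BZ$, so $A_T/I^\perp\cong\BZ/p\BZ$. To see that $A_T$ is elementary abelian of rank $3$, I would exhibit the class $\eta:=\frac{1}{p}\sum_{i=1}^p(i-\tfrac{p+1}{2})v_i$. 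Rewriting $T$ in $v_i$-coordinates as $\{(x_i)\in\BZ^p:\sum x_i=0,\ \sum ix_i\equiv 0\pmod p\}$, one checks $\eta\in T^\vee\setminus A_{p-1}^\vee$ and $p\eta\in T$, the latter via the identity $\sum_{i=1}^p i(i-\tfrac{p+1}{2})=p(p^2-1)/12$ combined with $24\mid p^2-1$.

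With elementary-abelianness in hand, the standard splitting theory over $\BF_p$ decomposes $A_T=H\oplus H^\perp$: choose a cyclic $J$ of order $p$ complementing $I^\perp$ in $A_T$, adjust $J$ by an element of $I$ using the perfect pairing $I\times J\to\BQ/\BZ$ so that $J$ becomes isotropic, and set $H=I\oplus J$. The hyperbolic plane $H$ has determinant $-1$, hence $H\cong p^{\epsilon 2}$, carrying the same sign $\epsilon$ as $H^\perp\cong I^\perp/I=p^{\epsilon 1}$. In either congruence class of $p$ modulo $4$, the two signs multiply to $+$, yielding $q_{T(A_{p-1})}=p^{+3}$. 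The main obstacle is the elementary-abelianness step, which depends on $p\mid(p^2-1)/12$, equivalently $24\mid p^2-1$; this holds for $p\geq 5$, so the argument as stated applies in that range while $p=3$ would require separate treatment.
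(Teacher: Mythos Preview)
Your framework differs from the paper's, but the computational heart is the same: your element $\eta$ is exactly $-\tfrac{1}{p}\sum_{i=1}^{p-1}\omega_i$, the extra generator the paper adjoins to $A_{p-1}^\vee$ to obtain $T(A_{p-1})^\vee$, and your identity $\sum_{i=1}^p i\bigl(i-\tfrac{p+1}{2}\bigr)=\tfrac{p(p^2-1)}{12}$ is the same calculation that underlies the paper's claim $\sum_i\omega_i\in T(A_{p-1})$. Where you genuinely add something is the sign: the paper stops at ``$pT^\vee\subset T$, the Lemma then follows,'' whereas your hyperbolic-plane splitting $A_T=H\oplus H^\perp$ with $H\cong p^{\epsilon 2}$ and $H^\perp\cong p^{\epsilon 1}$ makes the $+$ in $p^{+3}$ explicit.

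There is one small gap. Exhibiting a single class $[\eta]$ of order $p$ outside $I^\perp$ only shows that the extension $0\to I^\perp\to A_T\to A_T/I^\perp\to 0$ splits; it does not by itself rule out $I^\perp\cong\BZ/p^2$, hence $A_T\cong\BZ/p^2\times\BZ/p$. The paper closes this by also checking $p\omega_i\in T$: since $p\omega_1=\sum_{j=1}^{p-1}(p-j)\alpha_j$ has coefficient sum $\tfrac{p(p-1)}{2}\equiv 0\pmod p$, one gets $pA_{p-1}^\vee\subset T$, so $I^\perp\cong(\BZ/p)^2$. Adding this one-line check (or invoking that nondegenerate forms on odd $p$-groups diagonalize, forcing $I^\perp$ to be the $p$-torsion) completes your argument. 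Finally, your caveat about $p=3$ is on target and in fact sharper than you state: for $p=3$ the Gram matrix of $T(A_2)$ in the basis $(\alpha_1-\alpha_2,\,\alpha_1+2\alpha_2)$ is $\left(\begin{smallmatrix}6&-3\\-3&6\end{smallmatrix}\right)$ with Smith normal form $\diag(3,9)$, so $A_{T(A_2)}\cong\BZ/3\times\BZ/9$ and statement (2) is actually false at $p=3$. The paper's proof has the same defect there (indeed $\sum_i\omega_i=\alpha_1+\alpha_2\notin T(A_2)$), though the lemma is only applied for $p\ge 11$.
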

\begin{proof}
Take $\omega_i\in A_{p-1}^\vee$ with $\omega_i(\alpha_j)=\delta_{ij}$, $1\le i,j\le p-1$. We can also regard $\omega_i$ as elements in $(A_{p-1})_\BQ$. Then $A_{p-1}^\vee=\Span_{\BZ}\{\omega_1,\cdots, \omega_{p-1}\}$. We have 
\begin{equation*}
\omega_1=v_1-{1\over p}(v_1+\cdots+v_p)={1\over p}(\alpha_{p-1}+2\alpha_{p-2}+
\cdots+(p-1)\alpha_1). 
\end{equation*}
Then the discriminant group of $A_{p-1}$ is a group of order $p$ with a generator $[\omega_1]$. The value of the finite quadratic form $q_{A_{p-1}}$ at $[\omega_1]$ is $1-{1\over p}\in \BQ/2\BZ$. This shows (1). 

For (2) it suffices to show $p T(A_{p-1})^\vee\subset T(A_{p-1})$. We have 
\begin{equation*}
T(A_{p-1})^\vee=\Span_{\BZ}(\omega_1, \cdots, \omega_{p-1}, {1\over p}\sum\limits_{i=1}^{p-1}\omega_i).
\end{equation*}
Since $p\omega_1=-(\alpha_{p-1}+2\alpha_{p-2}+
\cdots+(p-1)\alpha_1)\in T(A_{p-1})$, we have $pA_{p-1}^\vee\subset T(A_{p-1})$. We can directly check that
\begin{equation*}
\sum\limits_{i=1}^{p-1}\omega_i=\sum\limits_{i=1}^{p-1} i(p-i) \alpha_i \in T(A_{p-1}).
\end{equation*}
The Lemma then follows.
\end{proof}

\begin{lemma}
\label{lemma: action on discriminant of T(A) is nontrivial}
Suppose $(H,R)$ is a $p$-root pair with $R=A_{p-1}$. Then $H\cap W(R)$ has order $p$. Take a generator $g\in H\cap W(R)$, then it is a cycle in $S_p$ of length $p$. Suppose simple roots $\alpha_i=v_i-v_{i+1}$ satisfy $g(v_i)=v_{i+1}$ for $1\le i\le p-1$ and $g(v_p)=v_1$. Then $R^\#=T(A_{p-1})$. And the induced action of $H$ on the discriminant group of $T(A_{p-1})$ is nontrivial.
\end{lemma}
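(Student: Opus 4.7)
The plan has four steps, corresponding to the four assertions of the lemma.

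First, I would pin down $|H \cap W(R)|$. By Lemma~\ref{lem: pseudo A} we know $H \cap W(A_{p-1})$ is a $p$-group, and by Proposition~\ref{prop: type A} we have $p = (p{-}1) + 1 \mid |H \cap W(A_{p-1})|$. Since $W(A_{p-1}) = S_p$ has $p$-Sylow of order exactly $p$ (as $p! = p \cdot (p{-}1)!$ with $\gcd((p{-}1)!, p) = 1$), the order must be exactly $p$. A non-identity element of order $p$ in $S_p$ is necessarily a single $p$-cycle, giving the second assertion; after relabeling the $v_i$'s we may assume $g(v_i) = v_{i+1 \,\mathrm{mod}\, p}$.

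Second, I would identify $R^\#$ with $T(A_{p-1})$. Define the "coefficient-sum mod $p$" map $\tau \colon A_{p-1} \to \BZ/p\BZ$ by $\tau(\sum k_i\alpha_i) = \sum k_i \bmod p$, whose kernel is $T(A_{p-1})$ by definition. The key observation is that $\tau$ is $g$-equivariant for the trivial action on the target: $\tau(g\alpha_i) = \tau(\alpha_{i+1}) = 1 = \tau(\alpha_i)$ for $i < p-1$, and $g\alpha_{p-1} = v_p - v_1 = -\sum_{j=1}^{p-1}\alpha_j$ still has $\tau$-value $-(p-1) \equiv 1 \pmod p$. Hence $g\alpha - \alpha \in \ker\tau = T(A_{p-1})$ for every $\alpha \in R$, and of course $pR \subset T(A_{p-1})$, giving $R^\# \subset T(A_{p-1})$. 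For the reverse inclusion I would note that $R^\#$ contains the explicit generators $\alpha_{i+1} - \alpha_i$ (for $1 \le i \le p-2$) together with $p\alpha_1$; writing the matrix of these vectors in the $\alpha$-basis shows the generated sublattice has index $p$ in $R$, matching $[R : T(A_{p-1})] = p$, so equality follows.

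Third, for the action on $A_{T(A_{p-1})}$, I use the filtration $T \subset A_{p-1} \subset A_{p-1}^\vee \subset T^\vee$ (with $T := T(A_{p-1})$), each inclusion of index $p$ as recorded in Lemma~\ref{lemma: disc form of T(A)}. Consider the class $[\omega_1] \in T^\vee/T$ of the fundamental weight $\omega_1 = v_1 - \tfrac{1}{p}(v_1 + \cdots + v_p)$; it is nonzero since $\omega_1 \notin A_{p-1} \supset T$. A direct computation gives $g\omega_1 - \omega_1 = v_2 - v_1 = -\alpha_1$, and the class of $-\alpha_1$ is nonzero in $A_{p-1}/T \hookrightarrow T^\vee/T$ since $\tau(-\alpha_1) = -1 \not\equiv 0 \pmod p$. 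Thus $g[\omega_1] \ne [\omega_1]$ in $A_T$, proving the action is nontrivial.

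The only step that requires genuine care is the index computation in Step~2, because one has to verify that the explicit sublattice generated by the differences $g\alpha_i - \alpha_i$ (and $pR$) really exhausts $T(A_{p-1})$ and not merely a proper sublattice; matching indices via the upper-triangular determinant is the cleanest route. Everything else is either a direct citation of earlier results or an explicit coordinate computation.
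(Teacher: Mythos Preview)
Your proposal is correct and follows essentially the same route as the paper: identify $R^\#$ with $T(A_{p-1})$ via explicit generators plus an index-$p$ argument, then exhibit an element of $T(A_{p-1})^\vee$ moved by $g$ modulo $T(A_{p-1})$. The only differences are cosmetic: the paper argues $T(A_{p-1})\subset R^\#$ and then invokes rootlessness of $R^\#$ to force equality (rather than your $\tau$-map for the reverse inclusion), and it uses the witness $\tfrac{1}{p}\sum_i\omega_i$ with $g(\tfrac{1}{p}\sum_i\omega_i)-\tfrac{1}{p}\sum_i\omega_i=-\omega_1\notin A_{p-1}$, whereas your choice $\omega_1$ with $g\omega_1-\omega_1=-\alpha_1\notin T(A_{p-1})$ is arguably simpler.
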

\begin{proof}
The sublattice $T(A_{p-1})$ is generated by $\alpha_1-\alpha_2,\cdots, \alpha_{p-2}-\alpha_{p-1}, pA_{p-1}$. Since
\[
g(\alpha_i)-\alpha_i=\alpha_{i+1}-\alpha_{i}, 1\le i\le p-2, g(\alpha_{p-1})-\alpha_{p-1}=-\sum_{j=1}^{p-1}\alpha_j-\alpha_{p-1}
\]
we have $T_{A_{p-1}}\subset R^\#$. Because $[A_{p-1}:T(A_{p-1})]=p$ and $R^\#$ is strictly contained in $R=A_{p-1}$, we must have $T_{A_{p-1}}= R^\#$.

We have ${1\over p}\sum\limits_{i=1}^{p-1}\omega_i\in (T(A_{p-1}))^{\vee}$. We can check that:
\begin{equation}
\label{equation: action on discriminant nontrivial}
   g({1\over p}\sum\limits_{i=1}^{p-1}\omega_i)-{1\over p}\sum\limits_{i=1}^{p-1}\omega_i=-\omega_1\notin A_{p-1}.
\end{equation}
Hence  $H$ acts nontrivially on the discriminant group of $T(A_{p-1})$.
\end{proof}

Suppose $(H,R)$ is a $p$-root pair with $R=A_{p-1}^k$, where $k$ is a positive integer. We write $R=R_1\oplus \cdots\oplus R_k$ with each $R_i$ isomorphic to $A_{p-1}$. Then $(H_i, R_i)$ is a $p$-root pair by Lemma \ref{lemma: component inherit the structure}. By Lemma \ref{lemma: action on discriminant of T(A) is nontrivial}, $H_i\cap W(R_i)$ contains a generator of order $p$, which gives rise to a sublattice $T(R_i)\subset R_i$. Then we have the following lemma:
\begin{lemma}
\label{lemma: action on discriminant of M is nontrivial}
Under the above setup, we have $\oplus_{i=1}^k T(R_i)\subset R^\#$, and for any even lattice $M$ with $\oplus_{i=1}^k T(R_i) \subset M\subsetneq R$ which is preserved by the action of $H$, the induced action of $H$ on $A_M$ is nontrivial.
\end{lemma}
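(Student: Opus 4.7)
The plan is to handle the two assertions separately, in each case restricting to a single summand $R_j$ and invoking Lemma \ref{lemma: action on discriminant of T(A) is nontrivial}.

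For the inclusion $\oplus_{i=1}^k T(R_i)\subset R^\#$, I argue componentwise. Recall that $R^\#$ is generated by $pR$ together with all differences $g\alpha-\alpha$ for $g\in H$ and $\alpha\in R$. If $\alpha\in R_i$, then $g\alpha$ lies in $R_i$ precisely when $g$ stabilizes $R_i$, so the intersection $R_i\cap R^\#$ already contains $pR_i$ together with $\{g\alpha-\alpha:\ g\in\mathrm{Stab}_H(R_i),\ \alpha\in R_i\}$. The lattice generated by these data is exactly $R_i^\#$ for the sub-pair $(H_i,R_i)$, which by Lemma \ref{lemma: action on discriminant of T(A) is nontrivial} coincides with $T(R_i)$. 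Summing over $i$ gives the first claim.

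For the main statement, set $L=\oplus_{i=1}^k T(R_i)$ and argue by contradiction, assuming $H$ acts trivially on $A_M$. Since $L\subset M\subsetneq R$, dualizing gives $R^\vee\subsetneq M^\vee\subset L^\vee=\oplus_i T(R_i)^\vee$, so I may pick $x\in M^\vee\setminus R^\vee$ and decompose $x=\sum_i x_i$ with $x_i\in T(R_i)^\vee$. Because $x\notin R^\vee=\oplus_i R_i^\vee$, there is an index $j$ with $x_j\notin R_j^\vee$. By Lemma \ref{lemma: disc form of T(A)} the quotient $T(R_j)^\vee/R_j^\vee$ is cyclic of order $p$, generated by the class of $\xi_j$, so I can write $x_j=c\xi_j+y_j$ with $y_j\in R_j^\vee$ and $c\in\BZ$ not divisible by $p$.

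Now I lift the cyclic generator $g_j$ of $H_j\cap W(R_j)$ to an element $\tilde g\in H$ stabilizing $R_j$; this is possible because $H_j$ is by definition the image of $\mathrm{Stab}_H(R_j)\to\Aut(R_j)$. Since $\tilde g$ stabilizes $R_j$ and merely permutes the remaining summands among themselves, the $R_j$-component of $\tilde g x-x$ is $g_j x_j-x_j$. Using equation \eqref{equation: action on discriminant nontrivial} together with the fact that $g_j\in W(R_j)$ acts trivially on $A_{R_j}=R_j^\vee/R_j$, this rewrites as $-c\omega_1+(g_j y_j-y_j)$ with $g_j y_j-y_j\in R_j$. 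Because $[\omega_1]$ generates $R_j^\vee/R_j\cong\BZ/p\BZ$ and $c\not\equiv 0\;(\md\;p)$, the $R_j$-component lies in $R_j^\vee\setminus R_j$, hence $\tilde g x-x\notin R$ and a fortiori $\tilde g x-x\notin M$, contradicting the assumed triviality of the $H$-action on $A_M$. The main bookkeeping subtlety is isolating the restriction of $\tilde g$ to $R_j$ from its permutation of the other summands; the observation that only the $R_j$-component of $\tilde g x-x$ is relevant, and that this component depends solely on $x_j$ because $\tilde g$ stabilizes $R_j$, is what makes the argument go through cleanly.
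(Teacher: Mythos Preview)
Your proof is correct and follows essentially the same route as the paper's: identify $T(R_i)$ with $R_i^\#$ for the componentwise pair to get the inclusion, then choose $x\in M^\vee$ with some coordinate $x_j\notin R_j^\vee$ and use the computation \eqref{equation: action on discriminant nontrivial} to see that a lift of the order-$p$ generator of $H_j\cap W(R_j)$ moves $x$ outside $R$ and hence outside $M$. Your version is simply more explicit than the paper about lifting $g_j$ to $\tilde g\in\mathrm{Stab}_H(R_j)$ and about writing $x_j=c\xi_j+y_j$ to justify $g_jx_j-x_j\notin R_j$, points the paper treats tersely.
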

\begin{proof}
By Lemma \ref{lemma: action on discriminant of T(A) is nontrivial}, we have $T(R_i)=R_i^\#$ with respect to the $p$-root pair $(H_i, R_i)$. Therefore, $R^\#$ contains $\oplus_{i=1}^k T(R_i)$ as a sublattice. Now suppose $M$ to be a lattice with $\oplus_{i=1}^k T(R_i)\subset M\subsetneq R=\oplus_{i=1}^k R_i$ and is preserved by the action of $H$. We have
\begin{equation*}
\oplus_{i=1}^k T(R_i) \subset M \subsetneq \oplus_{i=1}^k R_i \subsetneq \oplus_{i=1}^k R_i^\vee \subsetneq M^\vee \subset \oplus_{i=1}^k T(R_i)^\vee.
\end{equation*}
We can take an element $x=(x_1, \cdots, x_k)\in M^\vee$, such that $x_1\notin R_1^\vee$. Take $h$ a generator of $H_1\cap W(R_1) \le H$. From Equation \eqref{equation: action on discriminant nontrivial}, we know $h(x_1)-x_1\notin R_1$. Therefore, $h(x)-x\notin R$. This implies that the action of $h$ on $A_M$ is nontrivial.
\end{proof}

The following theorem is obtained by Dolgachev--Keum \cite[Theorem 2.1]{DK09} through a geometric argument using fixed loci and fibrations. For supersingular cases, we provide a lattice theoretic proof.

\begin{theorem}[Dolgachev--Keum]\label{thm:new DK}
    Let $X$ be a supersingular K3 surface. Then if  $p\ge 13$, for any finite group $G$ acting symplectically and faithfully on $X$, we have $(p,|G|)=1$.
\end{theorem}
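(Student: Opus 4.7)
The plan is to suppose that a prime $p \geq 13$ divides $|G|$ and derive a contradiction. Using the exact sequence $1\to G_L\to G\to G_R\to 1$ from~\eqref{eq:decomposition}, it suffices to show separately that $p \nmid |G_R|$ and $p \nmid |G_L|$.

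\emph{Handling $G_R$:} The pair $(G_R,R)$ is a $p$-root pair by Lemma~\ref{lemma: supersingular to root pair}. Since $p \geq 13$, Propositions~\ref{proposition: p>=7 no type E}, \ref{prop:no D}, and~\ref{prop: p>=5 no d4} rule out all $E$-type, $D_m$-type ($m\geq 5$), and $D_4$-type components, so every irreducible component of $R$ is of type $A$; Proposition~\ref{prop: type A} further constrains each to the form $A_{p^k-1}$, with rank at least $p-1\geq 12$. The rank bound $\rank R\leq 21$ (since $R\subseteq S_G(N_{p,1})$ and $G$ fixes an ample class) then forces either $R=0$ (contradicting $p\mid |G_R|$) or a single component $R=A_{p-1}$. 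For $p\geq 23$, the rank $p-1\geq 22$ already exceeds $21$. For the remaining primes $p\in\{13,17,19\}$, I set $M:=S_G(N_{p,\sigma})\cap R$; combining Lemma~\ref{lemma: supersingular to root pair} with Lemma~\ref{lemma: action on discriminant of T(A) is nontrivial} gives $R^\#=T(A_{p-1})$, and since $[R:T(A_{p-1})]=p$ while $R\not\subseteq S_G(N_{p,\sigma})$ (roots vs. rootless), necessarily $M=T(A_{p-1})$, primitive in $S_G(N_{p,\sigma})$. Lemma~\ref{lemma: action on discriminant of M is nontrivial} yields a nontrivial $G$-action on $A_M\cong p^{+3}$, which I expect to contradict the triviality of the $G$-action on $A_{S_G(N_{p,\sigma})}$ (from Proposition~\ref{prop:lat description of sym auto} and Lemma~\ref{lemma:trivial action}) once one exploits the gluing $M\oplus M^\perp\hookrightarrow S_G(N_{p,\sigma})$ together with the constraint $\rank(M^\perp)\leq 22-p$.

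\emph{Handling $G_L$:} Proposition~\ref{proposition: G_L} places the enlargement $(\widetilde{G}_L,S_{G_L}(N_{p,1}))$ in Table~\ref{table: sublist of HM} as a H\"ohn--Mason pair; in particular $G_L\leq \Co_0$. Since $|\Co_0|=2^{22}\cdot 3^9\cdot 5^4\cdot 7^2\cdot 11\cdot 13\cdot 23$, primes $17$, $19$, and any $p\geq 29$ are automatically excluded from dividing $|G_L|$. For $p=23$, an order-$23$ element $g\in G_L$ gives $S_g(N_{p,1})\subseteq S_{G_L}(N_{p,1})$ of rank divisible by $p-1=22$ (from the $\BQ[\zeta_{23}]$-isotypic decomposition), contradicting $\rank S_{G_L}(N_{p,1})\leq 21$. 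For $p=13$, direct inspection of Table~\ref{table: sublist of HM} verifies that no entry with $13\mid |G|$ admits a primitive embedding into $N_{13,1}$.

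\emph{Main obstacle:} The technical crux is, for $p\in\{13,17,19\}$, converting the nontriviality of the $G$-action on $A_M\cong p^{+3}$ into a contradiction with triviality on $A_{S_G(N_{p,\sigma})}$. The gluing $M\oplus M^\perp\hookrightarrow S_G(N_{p,\sigma})$ with the tight rank bound $\rank(M^\perp)\leq 22-p$ (at most $3$ for $p=19$, $5$ for $p=17$, $9$ for $p=13$) should prevent the nontriviality on $A_M$ from being absorbed into the glue group; making this precise via Nikulin's discriminant-form machinery is the main calculation. The $p=13$ subcase in the $G_L$ argument is then routine bookkeeping against Table~\ref{table: sublist of HM}.
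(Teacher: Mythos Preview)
Your $G_L$ argument matches the paper's, and your reduction of $R$ to $0$ or a single $A_{p-1}$ is correct. The gap is exactly where you flag it: there is no general principle forcing a trivial $G$-action on $A_{S_G(N_{p,\sigma})}$ to restrict to a trivial action on $A_M$ for an arbitrary $G$-stable primitive sublattice $M$---Lemma~\ref{lemma:trivial action} applies only when $M$ is itself a covariant lattice. Your proposed gluing analysis with the bound $\rk M^\perp\le 22-p$ may be completable, but it is not straightforward, and you have not carried it out.

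The paper dissolves the obstacle by a reduction you omit: since one is assuming $p\mid|G|$, pass at the outset to a cyclic subgroup $G\cong\BZ/p\BZ$. Then the only irreducible rational $G$-representations are the trivial one and $\BQ(\zeta_p)$ of dimension $p-1$, so $(p-1)\mid\rk S_G(N_{p,\sigma})$. Since $\rk S_G(N_{p,\sigma})=\rk S_G(N_{p,1})\ge\rk R=p-1$ and $\rk S_G(N_{p,\sigma})\le 21<2(p-1)$, one is forced into $\rk S_G(N_{p,\sigma})=p-1$. Then $S_G(N_{p,1})=R=A_{p-1}$ (same rank, and $A_{p-1}$ has no proper even overlattice since $|A_R|=p$), so $S_G(N_{p,\sigma})$ itself equals $T(A_{p-1})$: there is no $M^\perp$ to glue. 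Now Lemma~\ref{lemma: action on discriminant of M is nontrivial} directly contradicts the triviality of the $G$-action on $A_{S_G(N_{p,\sigma})}$. Note that you already invoke exactly this rank-divisibility trick in your $G_L$ argument for $p=23$; applying it globally after the cyclic reduction is what makes your gluing problem disappear.
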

\begin{proof}
When $\sigma(X)=1$, this follows from the classfication result in \cite{OS24}. We now deal with the cases that $\sigma(X)\ge 2$. We argue by contradiction. 
     
    Consider the sequence \eqref{eq:decomposition}. By Proposition \ref{proposition: G_L}, we have shown that $(G_L,S_{G_L}(N_{p,1}))$ is a H\"ohn--Mason pair, then by the H\"ohn--Mason list, $G_L$ is tame. Hence $G\ne G_L$, and $R$ is nontrivial.
    
    Without loss of generality, we assume that $G\cong \mathbb{Z}/p\mathbb{Z}$. By Proposition \ref{prop:no D}, there is no component of type D. By Lemma \ref{proposition: p>=7 no type E}, there is no component of type E. Then all the possible $R$ is of type $A$. Again by Lemma \ref{lemma: component inherit the structure} and Proposition \ref{prop: type A}, $R$ has to be irreducible. Hence $R=A_{p-1}$.

    Suppose $\rk(S_G(N_{p,\sigma}))= \rk(A_{p-1})$. Since $A_{A_{p-1}}\cong\mathbb{Z}/p\mathbb{Z}$ and $S_G(N_{p,\sigma})$ is rootless, then $S_G(N_{p,\sigma})=T(A_{p-1})$. By Lemma \ref{lemma: action on discriminant of M is nontrivial}, the action of $G_R$ on $A_{S_G(N_{p,\sigma})}$ is nontrivial, contradiction. Therefore,  $\rk(S_G(N_{p,\sigma}))> \rk(A_{p-1})$. The action of $g$ on $S_G(N_{p,\sigma})$ has no fixed vector, which implies $(p-1) \big{|} \rk(S_G(N_{p,\sigma}))$. Then $\rk(S_G(N_{p,\sigma}))\ge 2(p-1)\ge 24$, contradiction.
\end{proof}
\subsection{Upper Bounds of Wild Degrees}\label{sec:upper bound}
We denote the exponent of $p$ in $|G|$ by $\nu_p(G)$, and call it the wild degree of $G$. In this subsection we aim to give an upper bound of the wild degree for a finite group $G$ of symplectic automophisms of a supersingular K3 surface in characteristic $p=11, 7, 5, 3$. We start from the following lemma which specializes results on $p$-root pairs to $R\subset S_G(N_{p,1})$.
\begin{lemma}\label{lem:specialization}
    \begin{enumerate}
        \item For $p=11$, the only possible irreducible component of $R$ is $A_{10}$.
        \item For $p=7$, the only possible irreducible component of $R$ is $A_{6}$.
        \item For $p=5$, all possible irreducible components of $R$ are: $A_{4}, E_8$.
        \item For $p=3$, all possible irreducible components of $R$ are: $A_2, A_8, D_4, E_6,E_8$.
    \end{enumerate}
\end{lemma}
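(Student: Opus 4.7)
The plan is to specialize the classification of (pseudo) $p$-root pairs developed in \S\ref{section: p-root pair} to the root system $R\subset S_G(N_{p,1})$. First I would recall from Lemma \ref{lemma: supersingular to root pair} that $(G_R,R)$ is a $p$-root pair, and then invoke Lemma \ref{lemma: component inherit the structure} to pass to each irreducible component $(H_i, R_i)$, which is itself a $p$-root pair. This reduces the problem to listing all irreducible $p$-root pairs for each prime $p\in\{3,5,7,11\}$ and then imposing the rank constraint.

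The rank constraint comes from the fact that $S_G(N_{p,1})$ is negative definite of rank at most $21$: since $G$ is finite there is a $G$-invariant ample class in the positive cone, so the orthogonal complement is negative definite, and $\NS(X)^{G}$ contains that class. Consequently every irreducible component $R_i$ has rank at most $21$.

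Now I would combine the type-by-type analysis from \S\ref{section: p-root pair}:
\begin{itemize}
\item Type $A_m$: by Proposition \ref{prop: type A}, $m+1$ must be a power of $p$. The powers of $p$ that are at most $22$ yield exactly the components $A_{10}$ for $p=11$, $A_{6}$ for $p=7$, $A_{4}$ for $p=5$, and $A_{2},A_{8}$ for $p=3$ (since $25>22$, $49>22$, $121>22$, and $27>22$ eliminate the higher powers).
\item Type $D_m$ with $m\ge 5$: excluded by Proposition \ref{prop:no D}.
\item Type $D_4$: excluded for $p\ge 5$ by Proposition \ref{prop: p>=5 no d4}, and realized for $p=3$ via Lemma \ref{lem:p=3,D4}.
\item Type $E$: excluded for $p\ge 7$ by Proposition \ref{proposition: p>=7 no type E}; for $p=5$ Lemma \ref{lemma: E} leaves only $E_8$; for $p=3$ Lemma \ref{lemma: E} leaves $E_6$ or $E_8$.
\end{itemize}

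Collecting these cases for each prime gives exactly the four lists in the statement. There is no real obstacle here; the proof is essentially a direct bookkeeping exercise assembling Proposition \ref{prop: type A}, Proposition \ref{prop:no D}, Proposition \ref{prop: p>=5 no d4}, Lemma \ref{lem:p=3,D4}, Proposition \ref{proposition: p>=7 no type E}, and Lemma \ref{lemma: E}, with the rank bound $\rk(R_i)\le 21$ serving as the only quantitative input to trim the type $A$ possibilities down to finite lists. The mildly nontrivial point worth emphasizing in the writeup is the verification that the rank bound, together with the power-of-$p$ restriction for type $A$, really leaves only the stated components and nothing larger.
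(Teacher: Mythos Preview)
Your proposal is correct and follows essentially the same approach as the paper: reduce to irreducible components via Lemma \ref{lemma: component inherit the structure}, then combine the type-by-type results (Proposition \ref{prop: type A}, Proposition \ref{prop:no D}, Proposition \ref{prop: p>=5 no d4}, Lemma \ref{lem:p=3,D4}, Proposition \ref{proposition: p>=7 no type E}, Lemma \ref{lemma: E}) with the rank bound to trim the type $A$ list. The only cosmetic difference is that the paper organizes the argument prime-by-prime rather than type-by-type, and uses the slightly looser bound $\rk R\le 22$ in place of your $\rk(R_i)\le 21$.
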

\begin{proof}
    For $p=11,7$, by Proposition \ref{prop:no D}, \ref{prop: p>=5 no d4} and \ref{proposition: p>=7 no type E}, all components of $R$ are of type A. Since $\rank R\le 22$, by Lemma \ref{lemma: component inherit the structure} and Proposition \ref{prop: type A}, each component of $R$ is isomorphic to $A_{p-1}$. 

    For $p=5$, by Proposition \ref{prop:no D}, \ref{prop: p>=5 no d4}, there is no irreducible component of type $D$. Then the conclusion follows from Lemma \ref{lemma: component inherit the structure}, Proposition \ref{prop: type A}, \ref{prop: E contributes 1}.

  For $p=3$, by Proposition \ref{prop:no D}, Lemma \ref{lem:p=3,D4}, $D_4$ is the only possibility of type D. The remaining conclusion follows from Lemma \ref{lemma: E}, \ref{lemma: component inherit the structure} and Proposition \ref{prop: type A}.   
\end{proof}
\begin{proposition}\label{proposition: precise estimate of 11-order}
For $p=11$, we have $\nu_p(G)\le 1$.
\end{proposition}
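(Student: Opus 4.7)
My plan is to argue by contradiction: suppose $\nu_{11}(G)\ge 2$. Since a primitive $121$-st root of unity has minimal polynomial of degree $\phi(121)=110>22=\rank(N_{p,\sigma})$, no element of $\Aut(N_{p,\sigma})$ has order divisible by $121$, and hence $G$ must contain an elementary abelian subgroup $P\cong(\BZ/11\BZ)^2$. Extend the $P$-action to $N_{p,1}$ via Lemma \ref{lemma: extending group action}; a rational representation count on $N_{p,1}\otimes\BQ$ then forces $\rank(N_{p,1}^P)=2$ and shows that $S_P(N_{p,1})\otimes\BQ$ decomposes as the sum of two distinct nontrivial irreducible $\BQ$-representations of $P$, each of dimension $10$. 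Let $R_P$ denote the root sublattice of $S_P(N_{p,1})$; Lemma \ref{lemma: supersingular to root pair} applied to $P$ in place of $G$ makes $(P,R_P)$ a $p$-root pair, and Lemma \ref{lem:specialization} forces $R_P\cong A_{10}^{k'}$ for some $k'\in\{0,1,2\}$.

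I then rule out each case. For $k'=0$, Proposition \ref{prop:lat description of sym auto} and Lemma \ref{lemma:trivial action} make $(P,S_P(N_{p,1}))$ a Leech pair; combining $\rank(S_P(N_{p,1}))=20$ with Lemma \ref{lemma: sum} yields $\rank(S_P(N_{p,1}))+\ell(A_{S_P(N_{p,1})})\le 24$, so Proposition \ref{prop:emb leech} embeds $P$ into $\Co_0$, contradicting $|P|=121>11=|\Co_0|_{11}$. For $k'=2$, $A_{10}^2$ is saturated in any integral overlattice (a discriminant check, using that $-1$ is a non-residue mod $11$), which forces $S_P(N_{p,1})=R_P$. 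Then $M:=S_P(N_{p,\sigma})$ is a $P$-invariant sublattice that contains $R_P^\#\supset T(A_{10})\oplus T(A_{10})$ and is strictly contained in $R_P$ (by rootlessness of $S_P(N_{p,\sigma})\subset S_G(N_{p,\sigma})$). Applying Lemma \ref{lemma: action on discriminant of M is nontrivial} to the $p$-root pair $(P,R_P)$ and $M$ produces a nontrivial $P$-action on $A_M$, contradicting Lemma \ref{lemma:trivial action}.

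The subtle case is $k'=1$: then $R_P\cong A_{10}$, the image of $P\to\Aut(R_P)$ has order $11$ (as $|\Aut(A_{10})|_{11}=11$), its kernel $P_L$ has order $11$, and the orthogonal complement $R_P^\perp\subset S_P(N_{p,1})$ has rank $10$, is rootless, and equals $S_{P_L}(N_{p,1})$, on which $P_L$ acts faithfully. A rational representation count rules out any faithful action of $(\BZ/11\BZ)^2$ on a rank-$10$ lattice, so the kernel $K$ of $P\to\Aut(R_P^\perp)$ has order $11$; since $P_L$ acts faithfully on $R_P^\perp$, we have $K\cap P_L=\{e\}$ and therefore $P=P_L\times K$. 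Computing $L^K\otimes\BQ=L^P\otimes\BQ\oplus R_P^\perp\otimes\BQ$ gives $S_K(N_{p,1})=R_P$, so the argument of the previous paragraph, now applied to the $p$-root pair $(K,R_P)$ and $M=S_K(N_{p,\sigma})$, again delivers a contradiction. The main obstacle is this last case, where one must representation-theoretically identify the complementary $11$-subgroup $K$ of $P$ to which the lemma is then applied.
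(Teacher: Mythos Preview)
Your proof is correct and hinges on the same key input (Lemma~\ref{lemma: action on discriminant of M is nontrivial}) as the paper, but the organization is genuinely different. The paper works with the full group $G$ through the exact sequence~\eqref{eq:decomposition}: it rules out $\nu_{11}(G_R)=2$ (which forces $R\cong A_{10}^2$, essentially your $k'=2$ case with $G$ in place of $P$) and then shows $\nu_{11}(G_L)=0$ by consulting the H\"ohn--Mason list, since $S_{G_L}(N_{p,1})$ is orthogonal to $R$ and hence has rank too small to support a Leech pair with $11\mid|G_L|$ once $R\neq 0$ (and $G_L\le\Co_0$ with $\nu_{11}(\Co_0)=1$ handles $R=0$). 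You instead pass immediately to an elementary abelian subgroup $P\cong(\BZ/11\BZ)^2$, which makes the rational representation theory completely explicit and lets you replace the H\"ohn--Mason appeal by the single fact $|\Co_0|_{11}=11$ in your $k'=0$ case. The cost is the extra case $k'=1$, which the paper's $G_L/G_R$ split never encounters; your treatment of it---using the two distinct $10$-dimensional irreducibles to identify the complementary order-$11$ subgroup $K$ with $S_K(N_{p,1})=R_P\cong A_{10}$, and then rerunning the discriminant argument with $K$ in the role of the full group---is a genuine additional idea not in the paper, and it is sound (note that $A_{10}$ has no proper even overlattice, so indeed $S_K(N_{p,1})=R_P$). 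One minor quibble: the extension of the $P$-action from $N_{p,\sigma}$ to $N_{p,1}$ is more directly justified by the overlattice discussion preceding Lemma~\ref{lemma: supersingular to root pair} (isotropic subgroups of $A_{N_{p,\sigma}}$ are automatically $P$-stable) than by Lemma~\ref{lemma: extending group action}, though the conclusion is the same.
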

\begin{proof}
   By Lemma \ref{lem:specialization}, $R\cong A_{10}$ or $R\cong A_{10}\oplus A_{10}$. By Lemma \ref{lem: pseudo A}, $\nu_p(G_R)\le 2$. 

Suppose $\nu_p(G_R)=2$, then $R=R_1 \oplus R_2$ with $R_1\cong R_2\cong A_{10}$ and $G_R\cap W(R)=(\BZ/11\BZ)^2$. Take generators $g_1, g_2$ for $G_R\cap W(R)$, such that $g_1$ acts trivially on $R_2$, and $g_2$ acts trivially on $R_1$. Let $\alpha_i=v_i-v_{i+1}$ ($1\le i\le 10$) be simple roots for $R_1$, such that $g_1(v_i)=v_{i+1}$ for $1\le i\le 10$ and $g_1(v_{11})=v_1$. Then we can define $T(R_1)$. Similarly we can define $T(R_2)$. Then $R^\#=T(R_1)\oplus T(R_2)$. 

 The action of $g$ on $S_G(N_{p,\sigma})$ has no fixed vector, which implies $(p-1)\big{|} \rk(S_G(N_{p,\sigma})$. We must have $\rank(S_G(N_{p,1}))=20$. Since $q_R=11^{+2}$, $R$ cannot have nontrivial saturation. Thus $S_G(N_{p,1})=R$. Hence we have:
 \[
 R^{\#}\subset S_G(N_{p,\sigma})\subsetneq R.
 \]
 However by Lemma \ref{lemma: action on discriminant of M is nontrivial}, the action of $G_R$ on $A_{S_G(N_{p,\sigma})}$ is nontrivial, contradiction.
Therefore, $\nu_p(G_R)\le 1$.
     
By Proposition \ref{prop:emb leech}, the Leech pair $(G_L, S_{G_L}(N_{p,1}))$ is a subpair of $(\Co_0, \BL)$. Since $G_L$ acts trivially on $R$, by the rank $\ge 10$ cases of H\"ohn-Mason \cite[Table 1]{HM16},  we know that $|G_L|$ is coprime to $p$. Hence we can conclude that $\nu_p(G)\le 1$.
\end{proof}
\begin{proposition}
\label{proposition: estimate of p-order}
    We have the following upper bounds of the wild degree of $G$:
    \begin{enumerate}
        \item For $p=7$, $\nu_p(G)\le 3$.
        \item For $p=5$, $\nu_p(G)\le 6$.
        \item For $p=3$, $\nu_p(G)\le 14$.
    \end{enumerate}
    Upper bounds listed here do not depend on the Artin invariant $\sigma$.
\end{proposition}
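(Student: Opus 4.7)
The strategy mirrors the proof of Proposition~\ref{proposition: precise estimate of 11-order}. For each $p\in\{7,5,3\}$, apply the exact sequence $1\to G_L\to G\to G_R\to 1$ so that $\nu_p(G)=\nu_p(G_L)+\nu_p(G_R)$, and bound the two terms separately subject to the rank budget $\rank(R)+\rank(S_{G_L}(N_{p,1}))\le 21$.

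For $\nu_p(G_R)$, Lemma~\ref{lem:specialization} restricts the irreducible components of $R$. Grouping isomorphic components, write $R=\bigoplus_i R_i^{\oplus k_i}$, so that $G_R$ embeds into $\prod_i\bigl(\mathrm{Aut}(R_i)\wr S_{k_i}\bigr)$. By Lemma~\ref{lemma: component inherit the structure} the stabilizer of each copy (projected to $\mathrm{Aut}(R_i)$) is again a $p$-root pair, hence
$$\nu_p(G_R)\le \sum_i \bigl(k_i\cdot \mu_p(R_i)+\nu_p(S_{k_i})\bigr),$$
where $\mu_p(R_i)$ denotes the maximal $p$-adic valuation of a $p$-root pair on the irreducible $R_i$. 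Explicitly: $\mu_p(A_{p^e-1})=e$ by the fixed-point-free orbit analysis of Lemma~\ref{lem: pseudo A} combined with the transitivity statement of Proposition~\ref{prop: type A} (which pins $|H\cap W(A_{p^e-1})|$ to be a $p$-power dividing $p^e$); $\mu_3(D_4)=1$ by Lemma~\ref{lem:p=3,D4}; and the values $\mu_p(E_n)$ for $n=6,8$ follow from a Sylow analysis analogous to Proposition~\ref{prop: E contributes 1}, yielding a small explicit constant (and vanishing whenever no $p$-root pair of the given type exists, cf.\ Lemma~\ref{lemma: E}).

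For $\nu_p(G_L)$, Proposition~\ref{proposition: G_L} identifies $(\widetilde{G}_L, S_{G_L}(N_{p,1}))$ with a H\"ohn--Mason pair from Table~\ref{table: sublist of HM}, subject to $\rank(S_{G_L}(N_{p,1}))\le 21-\rank(R)$. Reading off the H\"ohn--Mason classification under this rank constraint -- and using the elementary fact that a faithful rational action of $\BZ/p^k\BZ$ requires at least $(p-1)p^{k-1}$ dimensions -- produces a sharp bound on $\nu_p(G_L)$ in each configuration. In particular, in the rank-extremal cases described below, $\rank(S_{G_L}(N_{p,1}))$ is forced to be so small that $\nu_p(G_L)=0$.

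It remains to maximize $\nu_p(G_L)+\nu_p(G_R)$ over all admissible decompositions. The extremal configurations are $R=A_6^{\oplus 3}$ for $p=7$ (giving $3+0=3$), $R=A_4^{\oplus 5}$ for $p=5$ (giving $5+\nu_5(S_5)=6$, with the residual $\rank(S_{G_L})\le 1$ forcing $\nu_5(G_L)=0$), and $R=A_2^{\oplus 10}$ for $p=3$ (giving $10\cdot 1+\nu_3(S_{10})=10+4=14$, again with $\rank(S_{G_L})\le 1$). The main obstacle is the exhaustive casework for $p=3$: under the rank budget $\le 21$ one must verify that no mixed configuration involving $A_8$, $D_4$, $E_6$, or $E_8$ -- each of which contributes a larger per-copy $\mu_3$ but permits fewer copies -- can surpass the $A_2^{\oplus 10}$ bound, and this ultimately hinges on sufficiently tight control of $\mu_3(E_6)$ and $\mu_3(E_8)$, together with a careful bookkeeping of the residual $\nu_3(G_L)$ from H\"ohn--Mason in each sub-extremal configuration.
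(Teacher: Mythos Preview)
Your strategy coincides with the paper's: decompose $\nu_p(G)=\nu_p(G_L)+\nu_p(G_R)$, bound $\nu_p(G_R)$ through the wreath-product structure on $R$, bound $\nu_p(G_L)$ via Table~\ref{table: sublist of HM} under the rank constraint $\rank(S_{G_L}(N_{p,1}))\le 21-\rank(R)$, and optimize. You also correctly identify the extremal configurations $A_6^{\oplus 3}$, $A_4^{\oplus 5}$, $A_2^{\oplus 10}$ and their values $3$, $6$, $14$.

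One minor mis-attribution: Proposition~\ref{prop: type A} supplies the \emph{lower} bound $p^e\mid |H\cap W(A_{p^e-1})|$ (via transitivity), not the upper bound. The inequality $|H\cap W(A_{p^e-1})|\le p^e$ comes rather from the fixed-point-freeness in Lemma~\ref{lem: pseudo A}, which forces the action on $p^e$ points to be semi-regular. Your conclusion $\mu_p(A_{p^e-1})=e$ is nonetheless correct---and in fact sharper for $A_8$ than the Sylow bound $4$ the paper uses.

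The genuine gap is that you do not close the optimization. You never record $\mu_5(E_8)$, $\mu_3(E_6)$, $\mu_3(E_8)$, and you explicitly defer the $p=3$ casework as ``the main obstacle.'' The paper completes this by taking the Sylow bounds $\mu_3(E_6)\le 4$, $\mu_3(E_8)\le 5$ (and $\mu_5(E_8)=1$ from Proposition~\ref{prop: E contributes 1}, though the Sylow bound $2$ already suffices for $p=5$), noting that at each fixed rank $r$ the configuration $A_2^{\oplus r/2}$ matches or dominates every competitor (e.g.\ $E_6$ versus $A_2^{\oplus 3}$ both yield $4$; $E_8$ versus $A_2^{\oplus 4}$ both yield $5$), and then running through the possible values of $\rank(R)$ while reading off $\nu_3(G_L)$ from Table~\ref{table: sublist of HM}. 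Until you supply those constants and carry out that rank-by-rank check, the assertion that no mixed configuration surpasses $A_2^{\oplus 10}$ remains unproved.
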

\begin{proof}
Suppose $p=7$. By Lemma \ref{lem:specialization},
    \[
   R\cong A_6, \;\text{or} \; R\cong A_6\oplus A_6, \;\text{or}\; R\cong A_6\oplus A_6\oplus A_6
    \]
    
    If $R\cong A_6$, then $\nu_p(G_R)\le 1$. By Proposition \ref{proposition: G_L}, the Leech pair $(G_L, S_{G_L}(N_{p,1}))$ is a subpair of $(\Co_0, \BL)$. Since $G_L$ acts trivially on $R$,  by the rank $\le 18$ cases of Table \ref{table: sublist of HM}, we know that the $\nu_p(G_L)\le 1$ (the only case with $\nu_p(G_L)=1$ is case 52). Hence we can conclude that $\nu_p(G)\le 2$ in this case.
    
    If $R\cong A_6\oplus A_6$, then $\nu_p(G_R)\le 2$. By Proposition \ref{proposition: G_L}, the Leech pair $(G_L, S_{G_L}(N_{p,1}))$ is a subpair of $(\Co_0, \BL)$. Since $G_L$ acts trivially on $R$,  by the rank $\le 12$ cases of Table \ref{table: sublist of HM},  we know that $|G_L|$ is coprime to $p$. Hence we can conclude that $\nu_p(G)\le 2$ in this case.

    If $R\cong A_6\oplus A_6\oplus A_6$, $\nu_p(G_R)\le 3$. Again, $G_L$ acts trivially on $R$,  by the rank $\le 6$ cases of Table \ref{table: sublist of HM},  we know that $|G_L|$ is coprime to $p$. Hence $\nu_p(G)\le 3$ in this case.

    Suppose $p=5$. By Lemma \ref{lem:specialization}, there are two possibilities, components of $R$ can be of type $A_4$ or $E_8$. By Proposition \ref{prop: E contributes 1}, each $E_8$ component contributes at most one copy of $\BZ/5\BZ$. By Proposition \ref{prop:emb leech}, the Leech pair $(G_L, S_{G_L}(N_{p,1}))$ is a subpair of $(\Co_0, \BL)$. By the $\rk \le 16$ cases in Table \ref{table: sublist of HM}, $|G_L|$ is coprime to $p$. Hence we can conclude that if there exist $E_8$ components, $\nu_p(G)\le 4$ and the equality holds only if $R=E_8\oplus A_4\oplus A_4\oplus A_4$.

    Now we assume that there is no $E_8$ component. Then as in $p=7$, we can analyze $\nu_p(G_R), \mu_p(G_L)$ by the number of $A_4$ components in $R$ and Table \ref{table: sublist of HM}. To reach the maximal $\nu_p$, $R\cong A_4\oplus A_4\oplus A_4\oplus A_4\oplus A_4$. Considering the possible extra $\BZ/5\BZ$ given by the permutation, we have $\nu_p(G)\le 6$.

Finally suppose $p=3$. By Lemma \ref{lem:specialization}, all possible irreducible components of $R$ are
$A_2,A_8, D_4,E_6,E_8$. We also have
\begin{enumerate}
    \item If $(G, A_2)$ is a $p$-root pair, $\nu_p(G)\le 1$.
    \item If $(G, A_8)$ is a $p$-root pair, $\nu_p(G)\le 4$.
    \item If $(G, D_4)$ is a $p$-root pair, $\nu_p(G)\le 1$ (see Lemma \ref{lem:p=3,D4}).
    \item If $(G, E_6)$ is a $p$-root pair, $\nu_p(G)\le 4$.
    \item If $(G, E_8)$ is a $p$-root pair, $\nu_p(G) \le 5$.
\end{enumerate}
Compare $(G, A_2\oplus A_2\oplus A_2\oplus A_2)$ with $(G, A_8)$, there can be an extra $S_4$ acting on $A_2\oplus A_2\oplus A_2\oplus A_2$. Hence to reach the maximal possible wild degree, we only need to consider $A_2, E_6,E_8$ components. 
\begin{enumerate}
    \item If $\rk R=2$, $R=A_2$, then $\nu_p(G_R)\le 1$. By rank $\le 22$ cases in Table \ref{table: sublist of HM}, $\nu_p(G_L)\le 6$ (see case 101). Hence $\nu_p(G)\le 7$.
    \item If $\rk R=4$, we can choose $R=A_2\oplus A_2$, then $\nu_p(G_R)\le 2$. By rank $\le 20$ cases in Table \ref{table: sublist of HM}, $\nu_p(G_L)\le 6$ (see case 101). Hence $\nu_p(G)\le 8$.
    \item If $\rk R=6$, we have $\nu_p(G_R)\le 4$. By rank $\le 18$ cases in Table \ref{table: sublist of HM}, $\nu_p(G_L)\le 5$ (see case 35). Hence $\nu_p(G)\le 9$.
    \item If $\rk R=8$, we have $\nu_p(G_R)\le 5$. By rank $\le 16$ cases in Table \ref{table: sublist of HM}, $\nu_p(G_L)\le 1$. Hence $\nu_p(G)\le 6$.
    \item If $\rk R=10$, we have $\nu_p(G_R)\le 6$ ($R=A_2^{\oplus 5}$). By rank $\le 14$ cases in Table \ref{table: sublist of HM}, $\nu_p(G_L)\le 1$. Hence $\nu_p(G)\le 7$.
    \item If $\rk R=12$, we have $\nu_p(G_R)\le 8$. By rank $\le 12$ cases in Table \ref{table: sublist of HM}, $\nu_p(G_L)\le 1$. Hence $\nu_p(G)\le 9$.
    \item If $\rk R\ge 14$, $\nu_p(G_R)\le 14$, the equality holds when $R=A_2^{\oplus 10}$.   By rank $\le 10$ cases in Table \ref{table: sublist of HM}, $\nu_p(G_L)=0$. Hence $\nu_p(G)\le 14$.
\end{enumerate}
To conclude, we have $\nu_p(G)\le 14$.
\end{proof}

\appendix
\section{H\"ohn--Mason Pairs Admitting Primitive Embedding into $N_{p,1}$}
\label{appendix}
For a Leech pair $(G,S)$ from H\"ohn--Mason list, we need to justify for which odd prime $p$ is $S$ a primitive sublattice of $N_{p,1}$.

Recall that $q_{N_{p,1}}=p^{+2}$ if $p\equiv 3 \; (\md\; 4)$ and $q_{N_{p,1}}=p^{-2}$ if $p\equiv 1 \;(\md\; 4)$. We already proved that for $(G,S)$ there exists a primitive embedding of $S$ into $N_{p,1}$ if and only if there exists an embedding of $S\oplus D_{p,1}$ into $\II_{9,41}$ such that the image of $S$ is primitive. The algorithm is described in the proof of Proposition \ref{proposition: superspecial list}. 

\begin{longtable}
{|p{.04\textwidth} |p{.05\textwidth} | p{.27\textwidth} | p{.10\textwidth} | p{.18\textwidth} | p{.25\textwidth} |}
\hline
No. & rk(S) & Group & Order & $q_S$ & p \\\hline
$1$ & $0$ & $1$ & $1$ & $1$ & any \\
$2$ & $8$ & $2$ & $2$ & $2_{\II}^{+8}$ & any \\
$3$ & $12$ & $2^2$ & $4$ & $2_{\II}^{-6} 4_{\II}^{-2}$ & any \\
$4$ & $12$ & $3$ & $3$ & $3^{+6}$ & any \\
$6$ & $14$ & $2^3$ & $8$ & $2_{\II}^{+6} 4_2^{+2}$  & any \\
$7$ & $14$ & $S_3$ & $6$ & $2_{\II}^{-2} 3^{+5}$ & any \\
$9$ & $14$ & $4$ & $4$ & $2_2^{+2} 4_{\II}^{+4}$ & any \\
$10$ & $15$ & $2^4$ & $16$ & $2_{\II}^{+6}8_1^{+1}$  & any \\
$13$ & $15$ & $D_8$ & $8$ & $4_1^{+5}$  & any \\
$15$ & $16$ & $A_{3,3}$ & $18$ & $3^{+4}9^{-1}$ & any \\
$17$ & $16$ & $\Gamma_2 a_1=2\times D_8$ & $16$ & $2_{\II}^{+2} 4_0^{+4}$  & any \\
$18$ & $16$ & $D_{12}$ & $12$ & $2_{\II}^{+4}3^{+4}$ & any \\
$19$ & $16$ & $A_4$ & $12$ & $2_{\II}^{-2}4_{\II}^{-2}3^{+2}$ & any \\
$20$ & $16$ & $D_{10}$ & $10$ & $5^{+4}$ & any \\
$24$ & $17$ & $2^4:3$ & $48$ & $2_{\II}^{-4}8_1^{+1}3^{-1}$  & any \\
$26$ & $17$ & $\Gamma_4 a_1=2^4.2$ & $32$ & $2_{\II}^{+2}4_6^{+2}8_1^{+1}$  & any \\
$28$ & $17$ & $\Gamma_5 a_1=Q_8 * Q_8$ & $32$ & $4_7^{+5}$  & any \\
$29$ & $17$ & $S_4$ & $24$ & $4_3^{+3}3^{+2}$  & any \\
$31$ & $17$ & $Q_8$ & $8$ & $2_7^{-3}8_{\II}^{-2}$  & any \\
$35$ & $18$ & $3^{1+4}:2$ & $486$ & $3^{+5}$ & $3$ \\
$37$ & $18$ & $4^2.A_4$ & $192$ & $2_{\II}^{-2}8_6^{-2}$ 
 & any \\
$38$ & $18$ & $2^4.D_6$ & $96$ & $2_{\II}^{-2}4_7^{+1}8_1^{+1}3^{-1}$ 
 & any \\
$39$ & $18$ & $A_{4,3}$ & $72$ & $4_{\II}^{-2}3^{-3}$ 
 & any \\
$43$ & $18$ & $\Gamma_{25}a_1$ & $64$ & $4_5^{+3}8_1^{+1}$ 
 & any \\
$44$ & $18$ & $A_5$ & $60$ & $2_{\II}^{-2}3^{+1}5^{-2}$ 
 & any \\
$45$ & $18$ & $2\times S_4$ & $48$ & $2_{\II}^{+2}4_2^{+2}3^{+2}$ 
 & any \\
$46$ & $18$ & $3^2.4$ & $36$ & $2_6^{-2}3^{+2}9^{-1}$ 
 & any \\
$47$ & $18$ & $S_{3,3}$ & $36$ & $2_{\II}^{-2}3^{+3}9^{-1}$ 
 & any \\
$52$ & $18$ & $F_{21}$ & $21$ & $7^{+3}$ & any \\
$53$ & $18$ & $\mathrm{Hol}_4$ & $20$ & $2_6^{-2}5^{+3}$ & any \\
$55$ & $18$ & $\Gamma_3 a_2=\mathrm{SD}_{16}$ & $16$ & $2_7^{+1}4_7^{+1}8_{\II}^{+2}$ & any \\
$68$ & $19$ & $3^{1+4}:2.2$ & $972$ & $2_1^{+1}3^{-4}$ & 3 \\
$69$ & $19$ & $M_{20}$ & $960$ & $2_{\II}^{-2}8_1^{+1}5^{-1}$ & any \\
$70$ & $19$ & $4^2.S_4$ & $384$ & $4_7^{+1}8_6^{+2}$ & any \\
$72$ & $19$ & $A_6$ & $360$ & $4_5^{-1}3^{+2}5^{+1}$  & any \\
$73$ & $19$ & $A_{4,4}$ & $288$ & $2_{\II}^{+2}8_1^{+1}3^{+2}$ & any \\
$74$ & $19$ & $2^4:D_{12}$ & $192$ & $4_2^{-2}8_1^{+1}3^{-1}$ & any \\
$76$ & $19$ & $(Q_8 * Q_8):S_3$ & $192$ & $4_7^{-3}3^{+1}$ & any \\
$77$ & $19$ & $L_2(7)$ & $168$ & $4_1^{+1}7^{+2}$ & any \\
$82$ & $19$ & $S_5$ & $120$ & $4_3^{-1}3^{+1}5^{-2}$ & any \\
$84$ & $19$ & $M_9$ & $72$ & $2_7^{-3}3^{-1}9^{-1}$ & any \\
$85$ & $19$ & $3^2.D_8$ & $72$ & $4_1^{+1}3^{+2}9^{-1}$ & any \\
$87$ & $19$ & $T_{48}$ & $48$ & $2_7^{+1}8_{\II}^{-2}3^{-1}$ & any \\
$101$ & $20$ & $3^4:A_6$ & $29160$ & $3^{+2}9^{+1}$ & $3$ \\
$102$ & $20$ & $L_3(4)=M_{21}$ & $20160$ & $2_{\II}^{-2}3^{-1}7^{-1}$ & $3,7$ or $({21\over p})=-1$ \\
$106$ & $20$ & $2^4:A_6$ & $5760$ & $4_5^{-1}8_1^{+1}3^{+1}$ & $3$ or $({6\over p})=-1$ \\
$108$ & $20$ & $A_7$ & $2520$ & $3^{+1}5^{+1}7^{+1}$ & $3,5,7$ or $({105\over p})=-1$ \\
$109$ & $20$ & $3^{1+4}:2.2^2$ & $1944$ & $2_2^{+2}3^{+3}$ & $3$ \\
$110$ & $20$ & $2^4:S_5=M_{20}:2$ & $1920$ & $4_3^{-1}8_1^{+1}5^{-1}$ & $5$ or $({10\over p})=-1$ \\
$111$ & $20$ & $2^3:L_2(7)$ & $1344$ & $4_2^{+2}7^{+1}$ & $7$ or $({7\over p})=-1$ \\
$112$ & $20$ & $Q(3^2:2)=2^2.A_{4,4}$ & $1152$ & $8_6^{-2}3^{-1}$ & $3$ or $({3\over p})=-1$ \\
$118$ & $20$ & $S_6$ & $720$ & $2_{\II}^{-2}3^{+2}5^{+1}$ & $3,5$ or $({5\over p})=-1$ \\
$119$ & $20$ & $M_{10}$ & $720$ & $2_3^{-1}4_7^{+1}3^{-1}5^{+1}$ & $3,5$ or $({30\over p})=-1$ \\
$120$ & $20$ & $L_2(11)$ & $660$ & $11^{+2}$ & $11$ \\
$121$ & $20$ & $2^4:(S_3\times S_3)$ & $576$ & $4_7^{+1}8_1^{+1}3^{+2}$ & $({2\over p})=-1$ \\
$122$ & $20$ & $5^{1+2}:4$ & $500$ & $5^{+3}$ & 5 \\
$128$ & $20$ & $(3\times A_5):2$ & $360$ & $3^{-2}5^{-2}$ & $5$ \\
$129$ & $20$ & $2\times L_2(7)$ & $336$ & $2_{\II}^{+2}7^{+2}$ & $7$ \\
$134$ & $20$ & $3^2:\mathrm{SD}_{16}$ & $144$ & $2_1^{+1}4_1^{+1}3^{-1}9^{-1}$ & $({6\over p})=-1$ \\
$163$ & $21$ & $U_4(3)$ & $3265920$ & $4_7^{+1}3^{+2}$  & $3$ \\
$165$ & $21$ & $M_{22}$ & $443520$ & $4_5^{-1}11^{+1}$ & $11$ \\
$167$ & $21$ & $U_3(5)$ & $126000$ & $2_7^{+1} 5^{-2}$ & $5$ \\
$170$ & $21$ & $L_3(4).2$ & $40320$ & $4_3^{-1}3^{-1}7^{-1}$ & $7$ \\
$172$ & $21$ & $2^4.A_7=2^4:A_7$ & $40320$ & $8_1^{+1}7^{+1}$  & $7$ \\
$175$ & $21$ & $A_8$ & $20160$ & $4_1^{+1}3^{+1}5^{+1}$ & $5$ \\
$182$ & $21$ & $M_{11}$ & $7920$ & $2_7^{+1}3^{-1}11^{-1}$ & $11$ \\
$183$ & $21$ & $[2^4 3].S_5=2^4:(3\times S_5)$ & $5760$ & $8_1^{+1}3^{-1}5^{-1}$ & $5$ \\
\hline\hline
\caption{H\"ohn--Mason Pairs Admitting Primitive Embedding into $N_{p,1}$}
\label{table: sublist of HM}
\end{longtable}

\begin{remark}
There are some typos about $2$-parts in discriminants in H\"ohn--Mason. We corrects the typos we found by comparing \cite{HM16} to \cite{hashimoto2012finite}. For case 31, $2_3^{+3}8_{\II}^{-2}$ is corrected to $2_7^{-3}8_{\II}^{-2}$. For case 46, $2_2^{+2}3^{+2}9^{-1}$ is corrected to $2_6^{-2}3^{+2}9^{-1}$. For case 53, $2_2^{+2}5^{+3}$ is corrected to $2_6^{-2}5^{+3}$. For case 55, $2_5^{+1}4_1^{+1}8_{\II}^{+2}$ is corrected to $2_7^{+1}4_7^{+1}8_{\II}^{+2}$. For case 70, $4_3^{+1}8_2^{+2}$ is corrected to $4_7^{+1}8_6^{+2}$. For case 84, $2_2^{+3}3^{-1}9^{-1}$ is corrected to $2_7^{-3}3^{-1}9^{-1}$. For case 119, $2_5^{+1}4_1^{+1}3^{-1}5^{+1}$ is corrected to $2_3^{-1}4_7^{+1}3^{-1}5^{+1}$.

In the case $(G, q_S)\cong ([2^4 3].S_5, 8_1^{+1} 3^{-1} 5^{-1})$, a more precise description of $G$ has been discussed in \cite[Remark 7.2]{OS24}.
\end{remark}

\bibliographystyle{alpha}
\bibliography{ref}
\Addresses
\end{document}